
\documentclass[final,3p,times,12pt]{elsarticle}
\usepackage{multicol}        
\usepackage[bottom]{footmisc}
\usepackage{mdframed}

\usepackage[english]{babel}
\usepackage{amssymb,amstext,amsmath,amsthm,mathabx}
\usepackage{hyperref}
\usepackage{array}   
\newcolumntype{L}{>{$}l<{$}}
\theoremstyle{definition}
\newtheorem{definition}{Definition} 
\newtheorem{remark}{Remark} 
\newtheorem{theorem}{Theorem}

\sloppy
\newcommand{\no}[1]{\widebar{#1}}

\def\F{\mathcal F}
\def\P{\mathcal P}
\def\M{\mathcal M}
\def\I{\mathcal I}
\def\B{\mathcal B}
\def\H{\mathcal H}
\def\pr{\mathbb{P}}
\def\prev{\mathbb{P}}
\def\G{\mathcal{G}}
\usepackage[mathscr]{euscript}
\def\C{\mathscr{C}}
\def\K{\mathscr{K}}
\def\D{\mathscr{D}}
\def\Q{\mathcal{Q}}
\sloppy
\newtheorem{example}{Example}



\usepackage{amssymb}


\journal{International Journal of Approximate Reasoning, }

\begin{document}

\begin{frontmatter}





\title{Algebraic aspects and coherence conditions for conjoined and disjoined conditionals
 \tnoteref{mytitlenote}
}
\author[ag]{Angelo Gilio\fnref{fn1,fn2}}
\address[ag]{Department of Basic and Applied Sciences for Engineering, University of Rome ``La Sapienza'', Via A. Scarpa 14, 00161 Roma, Italy}
\ead{angelo.gilio@sbai.uniroma1.it}

\cortext[cor1]{Corresponding author}

\author[gs]{Giuseppe Sanfilippo\fnref{fn1,fn3}\corref{cor1}}
\address[gs]{Department of Mathematics and Computer Science, Via Archirafi 34, 90123 Palermo, Italy}
\ead{giuseppe.sanfilippo@unipa.it}
\tnotetext[myttitlenote]{Int. J. Approx. Reason., \url{https://doi.org/10.1016/j.ijar.2020.08.004}}
\fntext[fn1]{Both authors  equally contributed to this work}
\fntext[fn2]{Retired}
\fntext[fn3]{Also affiliated with INdAM-GNAMPA, Italy}
\begin{abstract}
We deepen the study of conjoined and disjoined  conditional events  in the setting of coherence. These objects, differently from other approaches, are defined  in the framework of conditional random quantities. We show that some well known properties, valid in the case of unconditional events, still hold in our approach to logical operations among  conditional events.  In particular we prove a decomposition formula and a related additive property. Then,  we introduce the set of conditional constituents generated by $n$ conditional events and we  show that they satisfy the basic properties valid in the case of unconditional events. We obtain a generalized inclusion-exclusion formula and we prove a suitable distributivity property. Moreover, under logical independence of basic unconditional events, we give two necessary and sufficient  coherence conditions.
The first condition gives a  geometrical  characterization for
 the coherence of  prevision assessments  on a family $\mathscr{F}$ constituted by   $n$ conditional events and  all  possible conjunctions among  them. 
The second condition characterizes the coherence of prevision assessments defined on $\mathscr{F}\cup \mathscr{K}$, where $\mathscr{K}$ is the set of conditional constituents associated with the conditional events in $\mathscr{F}$.
Then, we give  a further theoretical result and we examine some examples and counterexamples.
 Finally, we make a comparison with other approaches and we illustrate some theoretical aspects and applications.
\end{abstract}
\begin{keyword}
Coherence  \sep 
Conditional random quantities \sep Conjunction and disjunction of conditionals \sep  Decomposition formula \sep Conditional constituents \sep Inclusion-exclusion formula.
\end{keyword}
\end{frontmatter}
\section{Introduction and motivations}
The study of logical operations among conditional events is a relevant topic of research in many fields, such as  probability logic, multi-valued logic, artificial intelligence, and psychology of reasoning; it has been largely discussed and investigated by many authors (see, e.g.,
 \cite{Baratgin18,benferhat97,CoSV13,CoSV15,Douven19,FlGH20,GoNW91,Kauf09,McGe89,NgWa94}). We recall that in a pioneering paper, written in 1935, de Finetti (\cite{defi36}) proposed  a three-valued logic for conditional events, also studied by  Lukasiewicz. Moreover, different authors (such as Adams, Belnap, Calabrese, de Finetti, Dubois, van Fraassen, McGee, Goodmann, Lewis, Nguyen, Prade, Schay) 
 have  given many contributions to research on  three-valued logics and compounds of conditionals (for a survey see, e.g.,\cite{Miln97}).
Conditionals have been extensively studied also in \cite{edgington95,McGe89}.

Usually,  the result of the conjunction or the disjunction  of conditionals, as defined in literature, is still a conditional; see e.g.   \cite{adams75,Cala87,Cala17,CiDu12,CiDu13,GoNW91}. However, in this way classical probabilistic properties are lost; for instance, differently from the case of unconditional events, the lower and upper probability bounds for the conjunction of two conditional events are no more the Fr\'echet-Hoeffding bounds; in some cases trivially these bounds are 0 and 1, respectively. This aspect has been recently studied in \cite{SUM2018S}.

A different approach, where the result of conjunction or disjunction of conditionals is not
a  three-valued object, has been given in  \cite{Kauf09,McGe89}.  
In \cite{GiSa13c,GiSa13a,GiSa14}
  a related theory
has been  developed in the setting of coherence,
with the advantage (among other things) of properly managing the case where some conditioning events
have zero probability.
In these papers, the results of conjunction and disjunction of conditional events
are  {\em conditional random quantities} with a finite number of  possible values in the interval $[0,1]$. 

In addition, it has been proved that the  Fr\'echet-Hoeffding probability bounds continue to hold for the conjunction of two conditional events (\cite{GiSa14}).  In this paper, 
we give a related result which concerns the conjunctions 
associated with
 two disjoint sub-families  of a family of $n$ conditional events.

We show that the conjunction $\C_{1\cdots n}$ of $n$ conditional events can be decomposed as the sum of its conjunctions with a further conditional event $E_{n+1}|H_{n+1}$ and the negation $\no{E}_{n+1}|H_{n+1}$. This result generalizes the well known formula (for the indicators of two unconditional events $A$ and $B$): $A=AB+A\no{B}$.

We give a generalization of the inclusion-exclusion formula for the disjunction of a finite number of conditional events. 
Moreover, we prove the validity of a suitable distributivity property, by means of which we can  directly obtain the inclusion-exclusion formula.
A main motivation  of the  paper is that of  
introducing 
the  conditional constituents for a finite family of conditional events, which
can be looked at as a conditional counterpart of  atoms of a Boolean algebra.  We show that the  conditional constituents  satisfy the basic numerical and probabilistic properties of the (indicators of the) constituents associated with a finite family of unconditional events.  

   Under  logical independence,
 we give  a necessary and sufficient condition for coherence of a prevision assessment $\M$ on a  family $\F$  containing $n$ conditional events and  all the ($2^n-n-1$) possible  conjunctions among them. Such a  characterization  amounts to the  solvability of a linear system which can be interpreted in geometrical terms.
Then,  the set of all coherent assessments  on the  family $\mathscr{F}$ is represented by a list of linear inequalities on the components of each  prevision assessment $\M$. 

In addition, by considering the set $\mathscr{K}$ of conditional constituents associated with the conditional events in $\mathscr{F}$, we give a result which under logical independence characterizes the coherence of prevision assessments on $\mathscr{F} \cup \mathscr{K}$.
Then, given any  coherent assessment  $\M$ on $\F$, we  show that every  possible value of the random vector associated with $\mathscr{F}$ is itself a particular coherent assessment on $\mathscr{F}$. 
To better illustrate  our results, we examine some examples and counterexamples.

Finally, we make  a comparison with other approaches, by giving a result related to the notion of atom of a Boolean algebra of conditionals  introduced in \cite{FlGH17,FlGH20}. In this context,  we discuss the significance of our theory by recalling some theoretical aspects and applications.

The paper is organized as follows:
In Section \ref{SECT:PRELIMINARIES} we recall some basic notions and results on coherence of conditional probability   and prevision assessments. We  also recall the definition of conjunction and disjunction among conditional events, and the notion of negation.
In Section  \ref{SEC:DECOMP} we first give  a result related to   Fr\'echet-Hoeffding  bounds; then we illustrate the decomposition formula for the conjunction of $n$ conditional events.
In Section \ref{SEC:CONDCONST} we introduce the set $\mathscr{K}$ of conditional constituents for a  family   of  $n$ conditional events $\mathscr{E}=\{E_1|H_1,\ldots, E_n|H_n\}$. We show that, as in the case of unconditional events, the sum of the conditional constituents is equal to 1 and for each pair of them the conjunction is equal to 0. Then we show that, for each non empty subset $S\subseteq\{1,\ldots,n\}$   the conjunction  $\C_{S}=\bigwedge_{i\in S}E_i|H_i$  is the sum of suitable conditional constituents in $\mathscr{K}$; hence  the prevision of $\C_S$ is the sum of the previsions of such conditional constituents.  
In Section  \ref{SEC:INEX}   we give a generalization of the inclusion-exclusion formula for the disjunction of $n$ conditional events.
Then, we prove  a suitable distributivity property and we examine  related  probabilistic results. 
In Section \ref{SEC:NECSUFF}, under the hypothesis of logical independence of basic unconditional events, 
we characterize in terms of a suitable convex hull the set of all coherent prevision assessments on a family  $\mathscr{F}$ containing $n$ conditional events and all the possible conjunctions among them. Such a  characterization  amounts to the  solvability of a linear system. Then, we illustrate the set of all coherent assessments on the  family $\mathscr{F}$ by  a list of linear inequalities on the components of each prevision assessment. We also characterize the coherence of prevision assessments on $\mathscr{F} \cup \mathscr{K}$.
In Section \ref{EQ:FURTHASP}, given any  coherent assessment  $\M$ on $\F$,
we 
show that every  possible value of the random vector associated with $\mathscr{F}$ is itself a particular coherent assessment on $\mathscr{F}$.
In Section \ref{SEC:EX} we illustrate further  aspects on coherence by examining  some examples and counterexamples.
In Section \ref{SEC:COMP}, after a comparison with other approaches,
we give a result related to the notion of atom of a Boolean algebra of conditionals  introduced in \cite{FlGH17,FlGH20} and we illustrate some theoretical aspects and applications of our theory.
In Section \ref{SEC:CONC} we give some conclusions.

\section{Some preliminary notions and results} \label{SECT:PRELIMINARIES}
In this section we recall some basic notions and results which concern coherence (see, e.g., \cite{BeMR17,biazzo05,BiGS12,CaLS07,coletti02,PeVa17,PfSa17}) and logical operations among conditional events (see \cite{GiSa13c,GiSa13a,GiSa14,GiSa17,GiSa19}).
\subsection{Events and conditional events}
\label{sect:2.2}
An event $E$ is an uncertain fact described by a (non ambiguous) logical proposition; in formal terms $E$  is a two-valued logical entity which can be \emph{true}, or \emph{false}.
The \emph{indicator} of $E$, denoted by the same symbol, is  1, or 0, according to  whether $E$ is true, or false. The sure event and impossible event are denoted by $\Omega$ and  $\emptyset$, respectively. 
Given two events $E_1$ and $E_2$,  we denote by $E_1\land E_2$, or simply by $E_1E_2$, (resp., $E_1 \vee E_2$) the logical conjunction (resp., the  logical disjunction).   The  negation of $E$ is denoted $\no{E}$.  We simply write $E_1 \subseteq E_2$ to denote that $E_1$ logically implies $E_2$, that is  $E_1\no{E}_2=\emptyset$. 
We recall that  $n$ events $E_1,\ldots,E_n$ are logically independent when the number $m$ of constituents, or possible worlds, generated by them  is $2^n$ (in general  $m\leq 2^n$).

Given two events $E,H$,
with $H \neq \emptyset$, the conditional event $E|H$
is defined as a three-valued logical entity which is \emph{true}, or
\emph{false}, or \emph{void}, according to whether $EH$ is true, or $\no{E}H$
is true, or $\no{H}$ is true, respectively. Given a family $\mathscr{E} = \{E_1|H_1, \ldots, E_n|H_n\}$, 
we observe that, for each $i$, it holds that $E_iH_i \vee \no{E}_iH_i
\vee \no{H}_i=\Omega$; then by expanding the expression $\bigwedge_{i=1}^n(E_iH_i \vee \no{E}_iH_i
\vee \no{H}_i)$ we can represent $\Omega$ as the disjunction of $3^n$
logical conjunctions, some of which may be impossible. 
The
remaining ones are the constituents generated by 
$\mathscr{E}$ and, of course, are a partition of $\Omega$.
We denote by $C_1, \ldots, C_m$ the constituents which logically imply  the event
$\mathcal{H}_n=H_1 \vee \cdots \vee H_n$. Moreover,
 (if $\mathcal{H}_n \neq \Omega$) we denote by $C_0$ the
remaining constituent $\mathcal{\no{H}}_n = \no{H}_1 \cdots \no{H}_n$. Thus
\[
\begin{array}{ll}
\mathcal{H}_n = C_1 \vee \cdots \vee C_m \,,\;\;\; \Omega =
\mathcal{\no{H}}_n \vee
\mathcal{H}_n = C_0 \vee C_1 \vee \cdots \vee C_m \,,\;\;\; m+1 \leq 3^n
\,.
\end{array}
\]
For instance, given four logically independent events $E_1, E_2, H_1, H_2$, the constituents generated by $\mathscr{E}=\{E_1|H_1,E_2|H_2\}$ are $C_1=E_1H_1E_2H_2$, $C_2=E_1H_1\no{E}_2H_2$,
$C_{3}=\no{E}_1H_1  E_2H_2 $,         
$C_{4}=\no{E}_1H_1  \no{E}_2H_2$,     
$C_{5}=\no{H}_1     E_2H_2$,
$C_{6}=\no{H}_1     \no{E}_2H_2$,     
$C_{7}=E_1H_1  \no{H}_2$,
$C_{8}=\no{E}_1H_1  \no{H}_2$,
$C_0=\no{H}_1     \no{H}_2$.
\subsection{Coherent conditional prevision assessments for conditional random quantities}
\label{Coherence}
Given a  (real) random quantity $X$ and an event $H\neq \emptyset$, we denote by  $\prev(X|H)$ the prevision of $X$ conditional on $H$. In the framework of coherence, to assess $\prev(X|H)=\mu$ means that, for every real number $s$,  you are willing to pay 
an amount $s\mu$ and to receive  $sX$, or $s\mu$, according
to whether $H$ is true, or $\widebar{H}$
is true (the bet is called off), respectively. 
The random gain is $G=s(XH+\mu \widebar{H})-s\mu=
sH(X-\mu)$. 

As we will see, a  conjunction of $n$ conditional events is a conditional random quantity with a finite number of possible (numerical) values. Then,
in what follows,
for any given conditional random quantity $X|H$, we assume that, when $H$ is true, the set of possible values of $X$ is a finite subset of  the set of real numbers $\mathbb{R}$.
 In this case we say 
that $X|H$ is a finite conditional random quantity.  
Given a prevision function $\pr$ defined on an arbitrary family $\mathcal{K}$ of finite
conditional random quantities, consider a finite subfamily $\F = \{X_1|H_1, \ldots,X_n|H_n\} \subseteq \mathcal{K}$ and the vector
$\M=(\mu_1,\ldots, \mu_n)$, where $\mu_i = \pr(X_i|H_i)$ is the
assessed prevision for the conditional random quantity $X_i|H_i$, $i\in \{1,\ldots,n\}$.
With the pair $(\F,\M)$ we associate the random gain $G =
\sum_{i=1}^ns_iH_i(X_i - \mu_i)$. We  denote by $\G_{\mathcal{H}_n}$ the set of values of $G$ restricted to $\H_n= H_1 \vee \cdots \vee H_n$. 
Then, by the {\em  betting scheme} of de Finetti, the notion of coherence is defined as below.

\begin{definition}\label{COER-RQ}
		The prevision function $\pr$ defined on $\mathcal{K}$ is coherent if and only if, $\forall n
		\geq 1$,  $\forall \, \F=\{X_1|H_1, \ldots,X_n|H_n\} \subseteq \mathcal{K}$, it holds that: $min \; \G_{\mathcal{H}_n} \; \leq 0 \leq max \;
		\G_{\mathcal{H}_n}$, $\forall \, s_1, \ldots,
		s_n$. 
\end{definition}
A conditional prevision assessment $\prev$ on $\mathcal{K}$ is said  incoherent 	if and only if  there exists a finite combination of $n$ bets such that $ \min\mathcal{G}_{\mathcal{H}_n}\cdot \max \mathcal{G}_{\mathcal{H}_n}>0$, that is such that
the values in $\mathcal{G}_{\mathcal{H}_n}$
 are all positive, or all negative ({\em Dutch Book}). In the particular case where $\mathcal{K}$ is a family of conditional events, then Definition \ref{COER-RQ} becomes  the well known definition of coherence for a probability function $\prev$, denoted  as $P$, defined on $\mathcal{K}$.

Given a family $\F = \{X_1|H_1,\ldots,X_n|H_n\}$, for each $i = 1,\ldots,n,$ we denote by $\{x_{i1}, \ldots,x_{ir_i}\}$ the set of possible (numerical) values for the restriction of $X_i$ to $H_i$; then, for each $i = 1,\ldots,n,$ and $j = 1, \ldots, r_i$, we set $A_{ij} = (X_i = x_{ij})$. Of course, for each  $i$,  the family $\{\no{H}_i,\, A_{ij}H_i \,,\; j = 1, \ldots, r_i\}$ is a partition of the sure event $\Omega$, with  $A_{ij}H_i=A_{ij}$ and          $\bigvee_{j=1}^{r_i}A_{ij}=H_i$. Then,
the constituents generated by the family $\F$ are (the
elements of the partition of $\Omega$) obtained by expanding the
expression $\bigwedge_{i =1}^n(A_{i1} \vee \cdots \vee A_{ir_i} \vee
\no{H}_i)$. We set $C_0 = \no{H}_1 \cdots \no{H}_n=\no{\H}_n$ (it may be $C_0 = \emptyset$);
moreover, we denote by $C_1, \ldots, C_m$ the constituents
contained in $\H_n$. Hence
$\bigwedge_{i=1}^n(A_{i1} \vee \cdots \vee A_{ir_i} \vee
\no{H}_i) = \bigvee_{h = 0}^m C_h$.
With each $C_h,\, h =1,\ldots,m$, we associate a vector
$Q_h=(q_{h1},\ldots,q_{hn})$, where $q_{hi}=x_{ij}$ if $C_h \subseteq
A_{ij},\, j=1,\ldots,r_i$, while $q_{hi}=\mu_i$ if $C_h \subseteq \no{H}_i$;
with $C_0$ it is associated  $Q_0=\M = (\mu_1,\ldots,\mu_n)$. As, for each $i, j$, the quantities $x_{ij}, \mu_i$  are real numbers, it holds that  $Q_h\in \mathbb{R}^n$, $h=0,1,\ldots, m$.

Denoting by $\I$ the convex hull of $Q_1, \ldots, Q_m$, the condition  $\M\in \I$ amounts to the existence of a vector $(\lambda_1,\ldots,\lambda_m)$ such that:
$ \sum_{h=1}^m \lambda_h Q_h = \M \,,\; \sum_{h=1}^m\lambda_h
= 1 \,,\; \lambda_h \geq 0 \,,\; \forall \, h$; in other words, $\M\in \I$ is equivalent to the solvability of the system $(\Sigma)$, associated with  $(\F,\M)$, given below.
\begin{equation}\label{SYST-SIGMA}
(\Sigma) 
\left\{
\begin{array}{ll}
\sum_{h=1}^m \lambda_h q_{hi} =
\mu_i \,,\; i =1,\ldots,n, \\
\sum_{h=1}^m \lambda_h = 1,\;\;
\lambda_h \geq 0 \,,\;  \, h=1,\ldots,m.
\end{array}
\right.
\end{equation}
Given the assessment $\M =(\mu_1,\ldots,\mu_n)$ on  $\F =
\{X_1|H_1,\ldots,X_n|H_n\}$, let $S$ be the set of solutions $\Lambda = (\lambda_1, \ldots,\lambda_m)$ of system $(\Sigma)$ defined in  (\ref{SYST-SIGMA}).   
Then, the  following characterization theorem for coherent assessments on  finite families of conditional random quantities can be proved (\cite{BiGS08}).
\begin{theorem}\label{SYSTEM-SOLV}{ \rm [{\em Characterization of coherence}].
		Given a family of $n$ conditional random quantities $\F =
		\{X_1|H_1,\ldots,X_n|H_n\}$, with finite sets of possible values, and a vector $\M =
		(\mu_1,\ldots,\mu_n)$, the conditional prevision assessment
		$\prev(X_1|H_1) = \mu_1 \,,\, \ldots \,,\, \prev(X_n|H_n) =
		\mu_n$ is coherent if and only if, for every subset $J \subseteq \{1,\ldots,n\}$,
		defining $\F_J = \{X_i|H_i \,,\, i \in J\}$, $\M_J = (\mu_i \,,\,
		i \in J)$, the system $(\Sigma_J)$ associated with the pair
		$(\F_J,\M_J)$ is solvable. }
	\end{theorem}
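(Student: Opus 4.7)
The plan is to give a geometric reading of $(\Sigma_J)$: its solvability amounts to $\M_J$ belonging to the convex hull $\I_J$ of the vectors associated (as before (\ref{SYST-SIGMA}), but applied to the subfamily $\F_J$) with the constituents of $\F_J$ contained in $\H_J=\bigvee_{i\in J}H_i$. The bridge to coherence is the observation that, for bets $(s_i)_{i\in J}$, the value of the gain $G_J=\sum_{i\in J}s_iH_i(X_i-\mu_i)$ on such a constituent equals $\sum_{i\in J}s_i(q_{hi}-\mu_i)$, a linear functional in the bets. Both implications then reduce to standard convex-analytic arguments.

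\emph{Necessity.} Assume $\prev$ is coherent on $\mathcal{K}$ and fix $J\subseteq\{1,\ldots,n\}$. Applying Definition~\ref{COER-RQ} to the finite subfamily $\F_J\subseteq\mathcal{K}$ gives $\min\G_{\H_J}\leq 0\leq\max\G_{\H_J}$ for every choice of bets. If $\M_J\notin\I_J$, then since $\I_J$ is the convex hull of finitely many points and hence compact, the separating hyperplane theorem yields $(s_i)_{i\in J}$ with $\sum_{i\in J}s_i(q_{hi}-\mu_i)>0$ for every constituent of $\F_J$ contained in $\H_J$. The corresponding gain $G_J$ is then strictly positive throughout $\H_J$, so $\min\G_{\H_J}>0$, contradicting coherence; hence $(\Sigma_J)$ is solvable.

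\emph{Sufficiency.} Conversely, assume $(\Sigma_J)$ is solvable for every $J$, with solution $(\lambda_1^J,\ldots,\lambda_{m_J}^J)$. Multiplying the $i$-th equation of $(\Sigma_J)$ by $s_i$ and summing over $i\in J$ gives $\sum_{h=1}^{m_J}\lambda_h^J g_h^J=0$, where $g_h^J$ denotes the value of $G_J$ on the $h$-th constituent of $\F_J$. Since the $\lambda_h^J$ are nonnegative and sum to $1$, the vanishing of this convex combination forces $\min\G_{\H_J}\leq 0\leq\max\G_{\H_J}$, and coherence follows. The main subtlety is seeing why solvability must be required for \emph{every} subset $J$ and not only for $J=\{1,\ldots,n\}$: the no-Dutch-Book condition is evaluated on the restricted gain set $\G_{\H_J}$ rather than on $\G_{\H_n}$, and a convex certificate for $\M\in\I$ may concentrate all its mass on constituents outside $\H_J$, leaving the sub-system $(\Sigma_J)$ unsolvable and producing a Dutch Book on $\F_J$ that would be invisible when the gain is evaluated on $\H_n$.
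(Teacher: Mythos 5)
Your argument is correct. Note that the paper itself states this theorem without proof, citing \cite{BiGS08}, so there is no internal proof to compare against; your proposal is the standard convex-duality argument (solvability of $(\Sigma_J)$ as the condition $\M_J\in\I_J$, strict separation of $\M_J$ from $\I_J$ yielding a Dutch Book on $\F_J$ for necessity, and the vanishing convex combination $\sum_h\lambda_h^J g_h^J=0$ for sufficiency), and both directions are sound. Your closing observation about why solvability must be imposed for every $J$ --- that a solution of $(\Sigma)$ may put all its mass on constituents outside $\H_J$, so that the value $0$ of the gain on $\H_n\setminus\H_J$ masks a Dutch Book that only appears when the gain is restricted to $\H_J$ --- is exactly the point that makes the quantification over subsets non-redundant, and is consistent with the paper's own remark that solvability of $(\Sigma)$ alone is necessary but not sufficient.
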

	As shown by 
Theorem~\ref{SYSTEM-SOLV},  the solvability of system $(\Sigma)$ (i.e., the condition
$\mathcal{M}\in \mathcal{I}$) is a necessary (but not sufficient) condition
for coherence of $\mathcal{M}$ on $\mathcal{F}$.
Given the assessment $\mathcal{M}$ on
$\mathcal{F}$, let $S$ be the set of
solutions $\Lambda = (\lambda _{1}, \ldots ,\lambda _{m})$ of system
$(\Sigma )$ defined in (\ref{SYST-SIGMA}). By assuming the system
$(\Sigma )$ solvable, that is $S \neq \emptyset $, we define:
%
\begin{equation}\label{EQ:I0}
\begin{array}{ll}
I_0 = \{i : \max_{\Lambda \in S}  \sum_{h:C_h\subseteq H_i}\lambda_h= 0\},\;\;
  \F_0 = \{X_i|H_i \,, i \in I_0\},\;\;  \M_0 = (\mu_i ,\, i \in I_0)\,.
\end{array}
\end{equation}
We observe that   $i\in I_0$ if and only if the (unique) coherent extension of $\M$ to $H_i|\H_n$ is zero.
Then, the following theorem can be proved  (\cite[Theorem 3]{BiGS08}):
\begin{theorem}\label{CNES-PREV-I_0-INT}{\rm [{\em Operative characterization of coherence}]
		A conditional prevision assessment ${\M} = (\mu_1,\ldots,\mu_n)$ on
		the family $\F = \{X_1|H_1,\ldots,X_n|H_n\}$ is coherent if
		and only if the following conditions are satisfied: \\
		$(i)$ the system $(\Sigma)$ defined in (\ref{SYST-SIGMA}) is solvable; $(ii)$ if $I_0 \neq \emptyset$, then $\M_0$ is coherent. }
\end{theorem}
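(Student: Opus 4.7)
The plan is to prove the two implications separately. For the necessity direction the argument is short: if $\M$ is coherent on $\F$, then taking $J=\{1,\ldots,n\}$ in Theorem \ref{SYSTEM-SOLV} immediately gives condition $(i)$, and for $(ii)$ one observes that coherence is preserved under restriction to sub-families. Indeed, in Definition \ref{COER-RQ}, any Dutch book on the sub-family $\F_0$ extends to a Dutch book on $\F$ simply by choosing $s_i=0$ for $i\notin I_0$; hence $\M_0$ must itself be coherent on $\F_0$.

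For the sufficiency direction I would argue by strong induction on $n$. The base case $n=1$ is immediate, since solvability of $(\Sigma)$ just says that $\mu_1$ is a convex combination of the possible values of $X_1$ on $H_1$, which is equivalent to coherence of the singleton assessment. For the inductive step, assuming $(i)$ and $(ii)$ I would aim to verify the hypothesis of Theorem \ref{SYSTEM-SOLV}, namely solvability of $(\Sigma_J)$ for every $J\subseteq\{1,\ldots,n\}$. The plan is a case analysis relative to $I_0$: for $J=\{1,\ldots,n\}$ use $(i)$; for $J\subseteq I_0$ (with $I_0\neq\emptyset$), apply Theorem \ref{SYSTEM-SOLV} to $(\F_0,\M_0)$, whose hypothesis is granted by $(ii)$ and the fact that $|\F_0|<n$; for the mixed case the goal is to manufacture a solution of $(\Sigma_J)$ by projecting a suitable solution of $(\Sigma)$ and combining it with a solution produced by the recursion.

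The key construction for the mixed case rests on the following observation: by convexity of the solution set $S$ of $(\Sigma)$, together with the very definition of $I_0$ in \eqref{EQ:I0}, there exists a distinguished $\Lambda^{*}\in S$ such that $\sum_{h:C_h\subseteq H_i}\lambda^{*}_h>0$ for every $i\notin I_0$ (take an appropriate convex combination of solutions maximizing each coordinate in turn). Projecting $\Lambda^{*}$ along the coarser partition of $\Omega$ determined by $\F_J$ produces a candidate $\lambda$-vector that automatically satisfies the equations of $(\Sigma_J)$ indexed by $J\setminus I_0$. For indices in $J\cap I_0$ the projected mass on $H_i$ vanishes, so the constraint $\sum_{h}\lambda_{h}q_{hi}=\mu_i$ in $(\Sigma_J)$ must instead be met by mass placed on constituents of $\F_J$ contained in $\no{H}_i$, which carry value $\mu_i$; this is exactly where the inductively guaranteed solvability arising from the coherence of $\M_0$ enters, via Theorem \ref{SYSTEM-SOLV} applied to $(\F_0,\M_0)$.

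The main obstacle I anticipate is precisely this last synthesis step. The constituents generated by $\F_J$ are not the restrictions of the constituents of $\F$ but coarser aggregates of them, and one has to reconcile two separate $\lambda$-allocations (the projection of $\Lambda^{*}$ and the sub-solution furnished by the induction on $\F_0$) into a single convex combination with total mass one satisfying all $n$ equations of $(\Sigma_J)$. Getting the book-keeping right — in particular, ensuring that the allocation on $\no{H}_i$-constituents for $i\in J\cap I_0$ does not disturb the equations for $i\in J\setminus I_0$ — is the technical heart of the proof.
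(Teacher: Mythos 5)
First, a caveat: the paper does not actually prove this theorem; it imports it from \cite[Theorem 3]{BiGS08}, so the comparison below is with the standard argument behind that citation. Your necessity direction and your overall architecture for sufficiency (reduce everything to Theorem~\ref{SYSTEM-SOLV} by exhibiting, for each $J$, a solution of $(\Sigma_J)$; split cases according to the position of $J$ relative to $I_0$; use a distinguished $\Lambda^*\in S$ with $\sum_{h:C_h\subseteq H_i}\lambda_h^*>0$ for all $i\notin I_0$) are the right ones. The gap is exactly the step you flag as the ``technical heart,'' and it stems from a misdiagnosis of where condition $(ii)$ is needed. For any $J\not\subseteq I_0$ the \emph{normalized} projection of $\Lambda^*$ already solves \emph{all} of $(\Sigma_J)$, including the equations indexed by $J\cap I_0$; no combination with a second allocation coming from $\F_0$ is required, so the book-keeping problem you anticipate does not arise. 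Concretely: set $Z=\sum_{h:C_h\subseteq \H_J}\lambda_h^*$, where $\H_J=\bigvee_{i\in J}H_i$ (one has $Z>0$ precisely because some $i\in J$ lies outside $I_0$; you omit this normalization, which is needed since part of the mass of $\Lambda^*$ may sit on constituents of $\F_J$ contained in $\no{\H}_J$), and for each constituent $D_k\subseteq\H_J$ generated by $\F_J$ put $\nu_k=\frac{1}{Z}\sum_{h:C_h\subseteq D_k}\lambda_h^*$. For $i\in J$ the $i$-th equation of $(\Sigma_J)$ is equivalent to $\sum_{k:D_k\subseteq H_i}\nu_k(q_{ki}-\mu_i)=0$, and since $q_{ki}=q_{hi}$ whenever $C_h\subseteq D_k\subseteq H_i$, this collapses to $\frac{1}{Z}\sum_{h:C_h\subseteq H_i}\lambda_h^*(q_{hi}-\mu_i)=0$, which is the $i$-th equation of $(\Sigma)$ evaluated at $\Lambda^*$. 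When $i\in J\cap I_0$ this is the trivial identity $0=0$, and the full constraint $\sum_k\nu_k q_{ki}=\mu_i$ reads $\mu_i\cdot\sum_{k:D_k\subseteq\no{H}_i}\nu_k=\mu_i\cdot 1=\mu_i$, exactly because the entire unit mass sits on constituents valued $\mu_i$ --- the observation you correctly make but then wrongly think must be supplied by the recursion.

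Condition $(ii)$ enters only for $J\subseteq I_0$, where $Z=0$ and the projection degenerates: there $\F_J\subseteq\F_0$, and the forward direction of Theorem~\ref{SYSTEM-SOLV} applied to the coherent assessment $\M_0$ yields solvability of $(\Sigma_J)$ directly, so no induction on $n$ is needed anywhere. With the mixed case repaired as above, every $(\Sigma_J)$ is solvable and Theorem~\ref{SYSTEM-SOLV} gives coherence of $\M$. As written, however, your proposal leaves its central step unproved and points toward a synthesis of two $\lambda$-allocations that is both harder than and unnecessary for the actual argument.
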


In order to illustrate the previous results, we examine an example. 
\begin{example}
Let  $E, H, K$ be three events, with 
	$HK=\emptyset$ and $E$ logically independent of $H$ and $K$. Moreover, let $\P=(x,y)$ be a probability  assessment on the family $\mathscr{E}=\{E|H,E|K\}$, where $x=P(E|H)$ and $y=P(E|K)$. The constituents generated by $\mathscr{E}$ are: 
	$C_1=EH\no{K}$,
	$C_2=E\no{H}K$, $C_3=\no{E}H\no{K}$, $C_4=\no{E}\no{H}K$, $C_0=\no{H}\no{K}$.
	Then, the associated points $Q_h$'s  are: $
	Q_1=(1,y)$, $Q_2=(x,1)$, $Q_3=(0,y)$, $Q_4=(x,0)$, $Q_0=\P=(x,y)$.
The system $(\Sigma)$ is
\[
\left\{
\begin{array}{ll}
 \lambda_1 +\lambda_2\,x +\lambda_4\,x \,\,= \,\,x ,\;\;\lambda_1\,y +\lambda_2 +\lambda_3\,y\,\,= \,\,y ,\,\,
 \lambda_1 +\lambda_2 +\lambda_3+\lambda_4\,\,= \,\,1 ,\;\; \lambda_h\geq 0, \;\;\forall h.\\
\end{array}
\right.
\]
As it can be verified, for each $(x,y)\in[0,1]^2$, the vector
$(\lambda_1,\ldots,\lambda_4)=(\frac{x}{2},\frac{y}{2},\frac{1-x}{2},\frac{1-y}{2})$  is a solution of $(\Sigma)$. Moreover,  for this solution it holds that 
\[
\sum_{C_h\subseteq H}\lambda_h=\lambda_1+\lambda_3=\frac{1}{2}>0;\;\;\; \sum_{C_h\subseteq K}\lambda_h=\lambda_2+\lambda_4=\frac{1}{2}>0.
\]
 Then, $I_0=\emptyset$ and by Theorem \ref{CNES-PREV-I_0-INT} the  assessment $(x,y)$ is coherent, for every $(x,y)\in[0,1]^2$.
\end{example}
\subsection{A deepening on the notion of conditional random quantity}
We recall that, in the subjective approach to probability theory, given an event $H \neq \emptyset$ and a random quantity $X$ by the betting metaphor the conditional prevision  $\pr(X|H)$ is defined as the amount $\mu$ you agree to pay, by knowing that you will receive the amount $XH+\mu\no{H}$. This quantity
coincides with $X$, if $H$ is true, or with $\mu$, if $H$ is false  (bet  called off). 
Usually, in literature  the conditional random quantity $X|H$ is  defined as the {\em restriction} of $X$ to $H$, which coincides with  $X$, when $H$ is true, and it  is undefined when $H$ is false. Under this point of view, (when $H$ is false) $X|H$ does not coincide with $XH+\mu\no{H}$. However, by
coherence,  it holds that  $\pr(XH+\mu\no{H})=
\pr(X|H)P(H)+\mu P(\no{H})=\mu P(H)+\mu P(\no{H})=  \mu$. Then, we can extend the notion of $X|H$, by  defining  its value as equal to $\mu$ when  $H$ is false (for further details see \cite{GiSa14}).
In this way $X|H$ coincides with $XH+\mu\no{H}$ and   in the betting scheme it can be interpreted as the amount that you receive when you pay its prevision $\mu$.  In addition, the random gain $G$ can be  represented as $G=s(X|H - \mu)$.
In particular,  when $X$ is the indicator of an event $E$, we obtain 
$X|H=EH+P(E|H)\no{H}$ and it holds that 
\[
\prev(X|H)=\prev[EH+P(E|H)\no{H}]
=P(E|H)P(H)+P(E|H)P(\no{H})=P(E|H).
\]
In this case $X|H$ is the  indicator of the conditional event  $E|H$  (which we denote  by the same symbol)  and, 
 by defining $P(E|H)=x$, 
it holds that
\begin{equation}\label{EQ:AgH}
E|H=
EH+x \widebar{H}=EH+x (1-H)=\left\{\begin{array}{ll}
1, &\mbox{if $EH$ is true,}\\
0, &\mbox{if $\no{E}H$ is true,}\\
x, &\mbox{if $\no{H}$ is true.}\\
\end{array}
\right.
\end{equation} 
For related discussions, see also \cite{CoSc99,GiSa13c,lad96}.
By Definition \ref{COER-RQ},  the coherence of the assessment $P(E|H)=x$ is equivalent to $min \; \G_{H} \; \leq 0 \leq max \;
\G_{H}$, $\forall \, s$, where $\G_{H}$ is the set of values of $G$ restricted to $H$. Then, the set $\Pi$ of coherent assessments $x$ on $E|H$ is: $(i)$ $\Pi=[0,1]$, when $\emptyset \neq EH \neq H$;
$(ii)$ $\Pi=\{0\}$, when $ EH=\emptyset$; $(iii)$ $\Pi=\{1\}$, when $ EH=H$.  Of course, the third  value of the random quantity  $E|H$  depends on the subjective assessment  $P(E|H)=x$.
Notice that,  when $H\subseteq E$ (i.e., $EH=H$), by coherence $P(E|H)=1$ and hence for the indicator it holds that $E|H=H+\no{H}=1$. 

By exploiting our extended notion of conditional random quantity, we can develop some algebraic aspects (\cite[Section 3]{GiSa13c},\cite[Section 3.2]{GiSa14}). For instance, we can show that:
	\begin{itemize}
	\item 
	denoting by $\mu$ and $\nu$ the previsions of 
	$X|H$ and $Y|K$, respectively, the sum $X|H+Y|K$ coincides with the conditional random quantity 
	$(XH+\mu \overline{H}+YK+\nu \overline{K})|(H \vee K)$, with
	$\mathbb{P}(X|H+Y|K)=\mathbb{P}(X|H)+\mathbb{P}(Y|K)=\mu +\nu $;
	\item $a(X|H) + b(Y |K)=(aX)|H + (bY )|K $, where $a,b$ are real numbers;
	\item $\mathbb{P}(XH|K)=P(H|K)\mathbb{P}(X|HK)$, which is the
	\emph{compound prevision theorem}.
\end{itemize}
Moreover, as shown by the result below, if  $X|H$ and $Y|K$ coincide when $H \vee K$ is true, then their previsions are equal and it follows that $X|H$ and $Y|K$ also coincide when $H \vee K$ is false, so that $X|H = Y|K$ in all cases (\cite[Theorem 4]{GiSa14}).
\begin{theorem}\label{THM:EQ-CRQ} Given any events $H\neq \emptyset$, $K\neq \emptyset$, and any r.q.'s $X$, $Y$, let $\Pi$ be the set of the coherent prevision assessments $\pr(X|H)=\mu,\pr(Y|K)=\nu$. \\
		(i) Assume that, for every $(\mu,\nu)\in \Pi$,  $X|H=Y|K$  when  $H\vee K$ is true; then   $\mu= \nu$ for every $(\mu,\nu)\in \Pi$. \\
		(ii) For every $(\mu,\nu)\in \Pi$,   $X|H=Y|K$  when  $H\vee K$ is true  if and only if $X|H= Y|K$.
\end{theorem}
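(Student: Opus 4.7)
The plan is to prove part (i) by exhibiting a single combination of bets whose associated random gain is constant on $H\vee K$, and then derive part (ii) directly from (i) using the extended definition of conditional random quantity.

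For part (i), I would first unpack the hypothesis. By the extended definitions $X|H = XH + \mu\no{H}$ and $Y|K = YK + \nu\no{K}$, and noting that the atoms of $H\vee K$ are $HK$, $H\no{K}$, and $\no{H}K$, the equality $X|H = Y|K$ on $H\vee K$ translates into three pointwise conditions: $X = Y$ on $HK$; $X = \nu$ on $H\no{K}$; and $Y = \mu$ on $\no{H}K$. Next, I would choose the specific combination $s_1 = 1$, $s_2 = -1$, so that the random gain is
\[
G \,=\, H(X-\mu) - K(Y-\nu).
\]
Substituting the three pointwise equalities above, a direct case analysis gives $G = \nu - \mu$ on each of the three atoms of $H\vee K$. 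Hence $\G_{H\vee K} = \{\nu - \mu\}$, and Definition \ref{COER-RQ} requires $\min\G_{H\vee K} \leq 0 \leq \max\G_{H\vee K}$, forcing $\mu = \nu$. Note that this argument goes through pair-by-pair, so $\mu = \nu$ is obtained for every $(\mu,\nu) \in \Pi$ satisfying the hypothesis.

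For part (ii), the implication from $X|H = Y|K$ to the restriction $X|H = Y|K$ on $H\vee K$ is immediate. For the converse, for each $(\mu,\nu) \in \Pi$ satisfying $X|H = Y|K$ on $H\vee K$, the argument of (i) applied to that specific pair yields $\mu = \nu$. The extended definitions then give $X|H = \mu = \nu = Y|K$ also on $\no{H}\no{K}$, so combined with the hypothesis the equality $X|H = Y|K$ holds on all of $\Omega$.

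The only non-routine step is identifying the combination $(s_1,s_2) = (1,-1)$ that renders the random gain constant on $H\vee K$; once this is found, the argument reduces to routine case analysis on the three atoms of $H\vee K$ together with one invocation of the coherence condition. I expect no deeper obstacle.
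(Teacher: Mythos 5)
Your proof is correct and follows essentially the same route as the paper's own argument: the paper considers a bet on the difference $X|H-Y|K$, whose restricted gain is exactly your $G=H(X-\mu)-K(Y-\nu)$ for the combination $(s_1,s_2)=(1,-1)$, and concludes $\mu=\nu$ by coherence since that gain is constantly $\nu-\mu$ on $H\vee K$. Your pointwise case analysis on the atoms $HK$, $H\no{K}$, $\no{H}K$ and the derivation of (ii) from (i) via the extended definition match the paper's reasoning.
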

To better illustrate Theorem \ref{THM:EQ-CRQ}, we observe that
\[
X|H-Y|K=(XH+\mu \no{H}-YK-\nu \no{K})|(H \vee K).
\] 
	Now, assume that $X|H$ and $Y|K$ coincide when $H\vee K$ is true, so that
$X|H-Y|K=0$ when $H\vee K$ is true. If $H\vee K$ is false, that is
$\overline{H}\,\overline{K}$ is true, it holds that $X|H=\mu $ and
$Y|K=\nu $, so that $X|H-Y|K=\mu-\nu$. Then, in a conditional bet on the conditional random quantity
$X|H-Y|K$, if you pay $\mu -\nu =\mathbb{P}(X|H-Y|K)$, you receive zero
when $H\vee K$ is true, or you receive back $\mu -\nu $ when
$H\vee K$ is false (bet called off). Then, by coherence, it must be
$\mu -\nu =0$, that is $\mu =\nu $, and hence $X|H=Y|K$.
%
\begin{remark}\label{REM:INEQ-CRQ}
	Theorem \ref{THM:EQ-CRQ} has been generalized in \cite[Theorem 6]{GiSa19} by replacing the symbol ``$=$'' by ``$\leq$'' in statements  $(i)$ and $(ii)$. In other words,
	if  $X|H\leq Y|K$ when  $H\vee K$ is true, then $\prev(X|H)\leq  \prev(Y|K)$ and hence  $X|H\leq  Y|K$ in all cases.	
\end{remark}		
\subsection{Logical operations among conditional events}
We recall below the notions of conjunction and disjunction of two conditional events.
\begin{definition}\label{CONJUNCTION} Given any pair of conditional events $E_1|H_1$ and $E_2|H_2$, with $P(E_1|H_1)=x_1$ and $P(E_2|H_2)=x_2$, their conjunction $(E_1|H_1) \wedge (E_2|H_2)$ is the conditional random quantity defined as
	\begin{equation}\label{EQ:CONJUNCTION}
	\begin{array}{lll}
	(E_1|H_1) \wedge (E_2|H_2)&=&(E_1H_1E_2H_2+x_{1}\no{H}_1E_2H_2+x_{2}\no{H}_2E_1H_1)|(H_1\vee H_2) =\\
	&=&\left\{\begin{array}{ll}
	1, &\mbox{if $E_1H_1E_2H_2$ is true,}\\
	0, &\mbox{if $\no{E}_1H_1\vee \no{E}_2H_2$ is true,}\\
	x_1, &\mbox{if $\no{H}_1E_2H_2$ is true,}\\
	x_2, &\mbox{if $\no{H}_2E_1H_1$ is true,}\\
	x_{12}, &\mbox{if $\no{H}_1\no{H}_2$ is true},
	\end{array}
	\right.
	\end{array}
	\end{equation}		
	where $x_{12}=\prev[(E_1|H_1) \wedge (E_2|H_2)]=\prev[(E_1H_1E_2H_2+x_{1}\no{H}_1E_2H_2+x_{2}\no{H}_2E_1H_1)|(H_1\vee H_2)]$.
\end{definition}
In betting terms, the prevision $x_{12}$ represents the amount you agree to pay, with the proviso that you will receive the quantity $E_1H_1E_2H_2+x_{1}\no{H}_1E_2H_2+x_{2}\no{H}_2E_1H_1$, or you will receive back the quantity $x_{12}$, according to whether $H_1\vee H_2$ is true, or  $\no{H}_1\no{H}_2$ is true. In other words, by paying $x_{12}$ you receive
 $E_1H_1E_2H_2+x_{1}\no{H}_1E_2H_2+x_{2}\no{H}_2E_1H_1+x_{12}\no{H}_1\no{H}_2$, which assumes one of the following values: \vspace{-0.3cm}
\begin{itemize}
	\item $1$, if both conditional events are true;\vspace{-0.3cm}
	\item  $0$, if at least one of the conditional events is false; \vspace{-0.3cm}
	\item  the probability of  the conditional event that is void if one conditional event is void  and the other one is true;\vspace{-0.3cm}
	\item  $x_{12}$ (the amount that you paid) if both 
	conditional events are void.
\end{itemize}
\begin{remark}\label{REM:CIRC}
By recalling  (\ref{EQ:AgH}), we again emphasize that there is a different indicator of a conditional event $E|H$ for each coherent evaluation of $P(E|H)$.
The same comment applies to the conjunction $(E_1|H_1) \wedge (E_2|H_2)$; indeed, 
each different conjunction  is associated to a different  coherent assessment $(x_1,x_2,x_{12})$. 
We also remark that 	  Definition \ref{CONJUNCTION} is not circular because, after assessing 
 $(x_1,x_2)$,
 the conjunction is completely specified 
 once by the  betting scheme you,  coherently with $(x_1,x_2)$, decide the value $x_{12}=\prev[(E_1H_1E_2H_2+x_{1}\no{H}_1E_2H_2+x_{2}\no{H}_2E_1H_1)|(H_1\vee H_2)]$.
\end{remark}
We  recall a result  which shows that Fr\'echet-Hoeffding bounds still hold for the conjunction of conditional events (\cite[Theorem~7]{GiSa14}).
\begin{theorem}\label{THM:FRECHET}
		Given any coherent assessment $(x_1,x_2)$ on $\{E_1|H_1, E_2|H_2\}$, with $E_1,H_1,E_2$, $H_2$ logically independent, $H_1\neq \emptyset, H_2\neq  \emptyset$, the extension $x_{12} = \mathbb{P}[(E_1|H_1) \wedge (E_2|H_2)]$ is coherent if and only if the following  Fr\'echet-Hoeffding bounds are satisfied:
		\begin{equation}\label{LOW-UPPER}
		\max\{x_1+x_2-1,0\} = x_{12}' \; \leq \; x_{12} \; \leq \; x_{12}'' = \min\{x_1,x_2\} \,.
		\end{equation}
\end{theorem}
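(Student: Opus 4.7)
The plan is to apply the operative characterization of coherence (Theorem \ref{CNES-PREV-I_0-INT}) to the family $\F=\{E_1|H_1,\,E_2|H_2,\,(E_1|H_1)\wedge(E_2|H_2)\}$ with assessment $\M=(x_1,x_2,x_{12})$. Under logical independence of $E_1,H_1,E_2,H_2$ the eight constituents contained in $H_1\vee H_2$ are
$C_1=E_1H_1E_2H_2$, $C_2=E_1H_1\no{E}_2H_2$, $C_3=\no{E}_1H_1E_2H_2$, $C_4=\no{E}_1H_1\no{E}_2H_2$, $C_5=E_1H_1\no{H}_2$, $C_6=\no{E}_1H_1\no{H}_2$, $C_7=\no{H}_1E_2H_2$, $C_8=\no{H}_1\no{E}_2H_2$, while $C_0=\no{H}_1\no{H}_2$. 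Using Definition \ref{CONJUNCTION}, the corresponding points $Q_h\in\mathbb{R}^3$ are $Q_1=(1,1,1)$, $Q_2=(1,0,0)$, $Q_3=(0,1,0)$, $Q_4=(0,0,0)$, $Q_5=(1,x_2,x_2)$, $Q_6=(0,x_2,0)$, $Q_7=(x_1,1,x_1)$, $Q_8=(x_1,0,0)$, and $Q_0=\M$.

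For \emph{necessity}, I would verify case-by-case on $C_1,\ldots,C_8$ (equivalently, on $H_1\vee H_2$) the pointwise double inequality
\[
\max\{(E_1|H_1)+(E_2|H_2)-1,\,0\}\;\leq\;(E_1|H_1)\wedge(E_2|H_2)\;\leq\;\min\{E_1|H_1,\,E_2|H_2\},
\]
which is immediate from the definitions (e.g.\ on $C_5$ the three terms equal $x_2$, $x_2$, $x_2$; on $C_6$ they are $\max\{x_2-1,0\}=0$, $0$, $0$; and so on). Then Remark~\ref{REM:INEQ-CRQ} applied to these inequalities between conditional random quantities yields, passing to previsions, both $x_{12}\leq\min\{x_1,x_2\}$ and $x_{12}\geq\max\{x_1+x_2-1,0\}$.

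For \emph{sufficiency}, I would produce an explicit solution of the system $(\Sigma)$ supported entirely on $C_1,\ldots,C_4$. Setting
\[
\lambda_1=x_{12},\quad \lambda_2=x_1-x_{12},\quad \lambda_3=x_2-x_{12},\quad \lambda_4=1-x_1-x_2+x_{12},\quad \lambda_5=\cdots=\lambda_8=0,
\]
the Fr\'echet--Hoeffding bounds are precisely what is needed to make all $\lambda_h\geq 0$, while $\sum_h\lambda_h=1$ and $\sum_h\lambda_hQ_h=(x_1,x_2,x_{12})$ by direct inspection of the first four $Q_h$'s. Since $C_1,C_2,C_3,C_4\subseteq H_1H_2$ all lie inside each of $H_1$, $H_2$, and $H_1\vee H_2$, the partial sums $\sum_{C_h\subseteq H_i}\lambda_h$ equal $1$ for $i=1,2,3$, so $I_0=\emptyset$ and Theorem~\ref{CNES-PREV-I_0-INT} gives coherence of $\M$.

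The only delicate point is ensuring $I_0=\emptyset$ in degenerate regimes (for instance $x_1=x_2=0$ forces $x_{12}=0$, so $\lambda_1=\lambda_2=\lambda_3=0$ and the entire mass concentrates on $\lambda_4$); the chosen support $\{C_1,C_2,C_3,C_4\}\subseteq H_1H_2$ handles all such corner cases uniformly, which is the main reason to prefer this particular convex combination over alternative solutions of $(\Sigma)$.
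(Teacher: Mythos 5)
Your proof is correct. Note that the paper itself does not prove Theorem~\ref{THM:FRECHET}: it is recalled from \cite[Theorem~7]{GiSa14}, so the only in-paper point of comparison is the generalization in Theorem~\ref{THM:RISPREL}, whose proof is purely geometric. There, necessity is obtained by showing that every point $Q_h$ ($h\neq 0$) lies in the tetrahedron $\mathcal{T}$ with vertices $(1,1,1),(1,0,0),(0,1,0),(0,0,0)$ (each non-binary point being written explicitly as a convex combination of two vertices), so that the convex hull $\mathcal{I}$ collapses to $\mathcal{T}$ and the necessary condition $\M\in\mathcal{I}$ forces the Fr\'echet--Hoeffding inequalities. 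You reach the same conclusion by a different route, via the pointwise comparison on $H_1\vee H_2$ and Remark~\ref{REM:INEQ-CRQ}; this is equally valid, with the small caveat that the remark compares pairs of conditional random quantities, so your single displayed inequality should be read as the four separate comparisons $\C_{12}\leq E_1|H_1$, $\C_{12}\leq E_2|H_2$, $\C_{12}\geq (E_1|H_1)+(E_2|H_2)-1$, $\C_{12}\geq 0$ (the prevision of $\min\{E_1|H_1,E_2|H_2\}$ is not $\min\{x_1,x_2\}$ in general, but each pairwise prevision inequality gives exactly what you need). Your sufficiency argument --- the explicit solution $\lambda_1=x_{12}$, $\lambda_2=x_1-x_{12}$, $\lambda_3=x_2-x_{12}$, $\lambda_4=1-x_1-x_2+x_{12}$ supported on $C_1,\ldots,C_4\subseteq H_1H_2$, which makes $I_0=\emptyset$ and lets Theorem~\ref{CNES-PREV-I_0-INT} close the argument --- is exactly the mechanism the paper uses elsewhere (and these $\lambda$'s are the previsions of the conditional constituents, as in Example~\ref{EX:CNESIBn=2}); your remark about the choice of support handling the degenerate corners uniformly is apt. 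No gaps.
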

\begin{remark}
	From Theorem \ref{THM:FRECHET}, as the assessment  $(x_1,x_2)$ on $\{E_1|H_1, E_2|H_2\}$ is coherent for every $(x_1,x_2)\in[0,1]^2$, the set $\Pi$ of all coherent prevision assessments $(x_1,x_2,x_{12})$ on $\{E_1|H_1, E_2|H_2,(E_1|H_1) \wedge (E_2|H_2)\}$ is 
	\begin{equation}\label{EQ:PI2}
	\Pi=\{(x_1,x_2,x_{12}): (x_1,x_2)\in[0,1]^2, \max\{x_1+x_2-1,0\}  \leq x_{12}\leq \min\{x_1,x_2\}
	\},
	\end{equation}
	which is the tetrahedron with vertices the points $(1,1,1), (1,0,0), (0,1,0), (0,0,0)$.
\end{remark}
Other related approaches to compound conditionals  have been developed in \cite{Kauf09,McGe89}.  However, in our coherence-based approach we can properly manage the case where the probability of some  conditioning events is zero. Then, differently from other authors, we can compute lower and upper bounds for conjunction and disjunction only in terms of the probabilities of the two given conditional events.
We recall below the notion of disjunction between two conditional events.
\begin{definition}\label{DISJUNCTION} Given any pair of conditional events $E_1|H_1$ and $E_2|H_2$, with $P(E_1|H_1)=x_1$ and $P(E_2|H_2)=x_2$, their disjunction
	$(E_1|H_1) \vee (E_2|H_2)$ is  the conditional random quantity  defined as
	\begin{equation}\label{EQ:DISJUNCTION}
	\begin{array}{lll}
	(E_1|H_1) \vee (E_2|H_2)&=&(E_1H_1\vee E_2H_2+x_{1}\no{H}_1\no{E}_2H_2+x_{2}\no{H}_2\no{E}_1H_1)|(H_1\vee H_2)=\\ &=&\left\{\begin{array}{ll}
	1, &\mbox{if $E_1H_1 \vee E_2H_2$ is true,}\\
	0, &\mbox{if $\no{E}_1H_1 \no{E}_2H_2$ is true,}\\
	x_1, &\mbox{if $\no{H}_1\no{E}_2H_2$ is true,}\\
	x_2, &\mbox{if $\no{H}_2\no{E}_1H_1$ is true,}\\
	y_{12}, &\mbox{if $\no{H}_1\no{H}_2$ is true},
	\end{array}
	\right.
	\end{array}
	\end{equation}		
	where $y_{12}=\prev[ (E_1|H_1) \vee (E_2|H_2)]=\prev[(E_1H_1\vee E_2H_2+x_{1}\no{H}_1\no{E}_2H_2+x_{2}\no{H}_2\no{E}_1H_1)|(H_1\vee H_2)]$.
\end{definition}
Of course, the assessment $(x_1,x_2,y_{12})$ must be coherent. In betting terms, $y_{12}$ represents the amount you agree to pay, with the proviso that you will receive the quantity $E_1H_1\vee E_2H_2+x_{1}\no{H}_1\no{E}_2H_2+x_{2}\no{H}_2\no{E}_1H_1+y_{12}\no{H}_1\no{H}_2$, which assumes one of the following values:\vspace{-0.3cm}
\begin{itemize}
	\item $1$, if at least one of the conditional events is true;\vspace{-0.3cm}
	\item  $0$, if both conditional events are false; \vspace{-0.3cm}
	\item  the probability of  the conditional event that is void if one conditional event is void  and the other one is false;\vspace{-0.3cm}
	\item  $y_{12}$ (the amount that you paid) if both 
	conditional events are void.
\end{itemize}
Notice that, differently from conditional events which are three-valued objects, the conjunction $(E_1|H_1) \wedge (E_2|H_2)$ and the  disjunction $(E_1|H_1) \vee (E_2|H_2)$
are not any longer  three-valued objects, but  five-valued objects. Moreover,   the  comments of Remark \ref{REM:CIRC} also apply in a dual way
to disjunction.

We give below the  notion of conjunction  of $n$ conditional events.
\begin{definition}\label{DEF:CONGn}	
	Let  $n$ conditional events $E_1|H_1,\ldots,E_n|H_n$ be given.
	For each  non-empty strict subset $S$  of $\{1,\ldots,n\}$,  let $x_{S}$ be a prevision assessment on $\bigwedge_{i\in S} (E_i|H_i)$.
	Then, the conjunction  $(E_1|H_1) \wedge \cdots \wedge (E_n|H_n)$ is the conditional random quantity $\C_{1\cdots n}$ defined as
	\begin{equation}\label{EQ:CF}
	\begin{array}{lll}
	\C_{1\cdots n}=
	[\bigwedge_{i=1}^n E_iH_i+\sum_{\emptyset \neq S\subset \{1,2\ldots,n\}}x_{S}(\bigwedge_{i\in S} \no{H}_i)\wedge(\bigwedge_{i\notin S} E_i{H}_i)]|(\bigvee_{i=1}^n H_i)=
	\\
	=\left\{
	\begin{array}{llll}
	1, &\mbox{ if } \bigwedge_{i=1}^n E_iH_i\, \mbox{ is true,} \\
	0, &\mbox{ if } \bigvee_{i=1}^n \no{E}_iH_i\, \mbox{ is true}, \\
	x_{S}, &\mbox{ if } (\bigwedge_{i\in S} \no{H}_i)\wedge(\bigwedge_{i\notin S} E_i{H}_i)\, \mbox{ is true}, \; \emptyset \neq S\subset \{1,2\ldots,n\},\\
	x_{1\cdots n}, &\mbox{ if } \bigwedge_{i=1}^n \no{H}_i \mbox{ is true},
	\end{array}
	\right.
	\end{array}
	\end{equation}
	where 
	\[
	x_{1\cdots n}=x_{\{1,\ldots, n\}}=\prev(\C_{1\cdots n})=\prev[(\bigwedge_{i=1}^n E_iH_i+\sum_{\emptyset \neq S\subset \{1,2\ldots,n\}}x_{S}(\bigwedge_{i\in S} \no{H}_i)\wedge(\bigwedge_{i\notin S} E_i{H}_i))|(\bigvee_{i=1}^n H_i)].
	\]	
\end{definition}
For  $n=1$ we obtain $\C_1=E_1|H_1$.  In  Definition \ref{DEF:CONGn}  each possible value $x_S$ of $\C_{1\cdots n}$,  $\emptyset\neq  S\subset \{1,\ldots,n\}$, is evaluated  when defining (in a previous step) the conjunction $\C_{S}=\bigwedge_{i\in S} (E_i|H_i)$. 
Then, after the conditional prevision $x_{1\cdots n}$ is evaluated, $\C_{1\cdots n}$ is completely specified. Of course, 
we require coherence for  the prevision assessment $(x_{S}, \emptyset\neq  S\subseteq \{1,\ldots,n\})$, so that $\C_{1\cdots n}\in[0,1]$.
In the framework of the betting scheme, $x_{1\cdots n}$ is the amount that you agree to pay with the proviso that you will receive:
 \vspace{-0.3cm}
\begin{itemize}
	\item $1$, if all conditional events are true;\vspace{-0.3cm}
\item	$0$, if at least one of the conditional events is false; \vspace{-0.3cm}
\item the prevision of the conjunction of that conditional events which are void,  otherwise. In particular you receive back $x_{1\cdots n}$ when all  conditional events are void.
\end{itemize}
We observe  that conjunction satisfies the monotonicity property  (\cite[Theorem7]{GiSa19}), that is
\begin{equation}\label{EQ:MONOT}
\C_{1\cdots n+1}\leq \C_{1\cdots n}.
\end{equation}
We recall the following result (\cite[Theorem13]{GiSa19}).
\begin{theorem}\label{THM:TEOREMAAI13}
	Let  $n$ conditional events $E_1|H_1,\ldots,E_{n}|H_{n}$ be given, with $x_i=P(E_i|H_i)$, $i=1,\ldots, n$  and $x_{1\cdots n}=\prev(\C_{1 \cdots n })$. Then:
	$
	\max\{x_1+\cdots+x_{n}-n+1,0\}
	\,\,\leq \,\, x_{1\cdots n} \,\,\leq\,\, \min\{x_1,\ldots,x_n\}.
	$
\end{theorem}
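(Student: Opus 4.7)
The plan is to establish the two bounds separately, using induction on $n$ and the results already available in the excerpt, in particular the monotonicity property~\eqref{EQ:MONOT} and the inequality version of Theorem~\ref{THM:EQ-CRQ} recorded in Remark~\ref{REM:INEQ-CRQ}. The cases $n=1$ and $n=2$ are trivial and Theorem~\ref{THM:FRECHET}, respectively, so they serve as base.

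For the upper bound, I would simply iterate~\eqref{EQ:MONOT}. Since conjunction is symmetric in its arguments, relabelling indices gives $\C_{1\cdots n}\leq \C_{1\cdots n-1}\leq\cdots\leq \C_i=E_i|H_i$ for every $i$. Applying Remark~\ref{REM:INEQ-CRQ} to each of these pointwise inequalities on $\H_n$ yields $x_{1\cdots n}\leq x_i$ for all $i$, hence $x_{1\cdots n}\leq \min\{x_1,\ldots,x_n\}$. The bound $x_{1\cdots n}\geq 0$ follows by the same device, once we know inductively that every $x_S\in[0,1]$ so that $\C_{1\cdots n}\geq 0$ on $\H_n$.

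The substantive step is the lower bound $x_{1\cdots n}\geq x_1+\cdots+x_n-n+1$. I would consider the conditional random quantity
\[
Y \;=\; \C_{1\cdots n}+\sum_{i=1}^n\bigl(1-(E_i|H_i)\bigr),
\]
which by the algebraic rules recalled in the excerpt (sum of conditional random quantities, linearity of prevision) is a conditional random quantity on $\H_n=H_1\vee\cdots\vee H_n$, with $\prev(Y)=x_{1\cdots n}+n-\sum_i x_i$. I would then verify, case by case on the constituents of $\H_n$ appearing in~\eqref{EQ:CF}, that $Y\geq 1$ when $\H_n$ is true. The two easy cases are when $\bigwedge_i E_iH_i$ is true (then $Y=1$) and when some $\no{E}_iH_i$ is true (then $\C_{1\cdots n}=0$ and at least one summand equals $1$, while the remaining summands are in $[0,1]$). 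The only delicate case is the constituent $(\bigwedge_{i\in S}\no{H}_i)\wedge(\bigwedge_{i\notin S}E_iH_i)$ for $\emptyset\neq S\subsetneq\{1,\ldots,n\}$: here $\C_{1\cdots n}=x_S$ and $\sum_i(1-E_i|H_i)=\sum_{i\in S}(1-x_i)$, so $Y\geq 1$ is equivalent to $x_S\geq \sum_{i\in S}x_i-|S|+1$, which is precisely the induction hypothesis applied to the conjunction $\C_S$. Once $Y\geq 1$ on $\H_n$, Remark~\ref{REM:INEQ-CRQ} gives $\prev(Y)\geq 1$, i.e.\ $x_{1\cdots n}\geq x_1+\cdots+x_n-n+1$, and combining with $x_{1\cdots n}\geq 0$ yields the stated maximum.

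The main obstacle is the bookkeeping in the delicate case above: one must be sure that the inductive hypothesis can be invoked for \emph{every} proper non-empty subset $S$, and that the pointwise inequality $Y\geq 1$ is taken exactly on $\H_n$ (and not on $\Omega$, where the value of $Y$ on $\no{\H}_n$ is the still-unknown quantity $x_{1\cdots n}+\sum_i(1-x_i)$). Remark~\ref{REM:INEQ-CRQ} is tailored precisely to bridge this gap, so the argument goes through cleanly.
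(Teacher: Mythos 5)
The paper does not actually prove this statement: it is recalled verbatim from \cite[Theorem 13]{GiSa19}, so there is no in-paper proof to compare against. Judged on its own, your argument is correct and self-contained. The upper bound via iterated monotonicity \eqref{EQ:MONOT} plus Remark~\ref{REM:INEQ-CRQ} is fine, and the lower bound via the auxiliary quantity $Y=\C_{1\cdots n}+\sum_{i}(1-E_i|H_i)$ is a clean duality-style argument: your case analysis on the constituents in \eqref{EQ:CF} is exhaustive (every constituent of $\H_n$ is of one of the three listed types), the inductive hypothesis is invoked only for proper nonempty subsets $S$, and Remark~\ref{REM:INEQ-CRQ} correctly transfers the pointwise inequality $Y\geq 1$ on $\H_n$ to $\prev(Y)\geq 1$. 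Two small remarks. First, you do not need Theorem~\ref{THM:FRECHET} as a base case — and it is better not to use it, since that theorem assumes logical independence of $E_1,H_1,E_2,H_2$ while the statement being proved does not; your inductive step already handles $n=2$ from the trivial case $n=1$, so the whole proof is free of any logical-independence hypothesis, consistent with the theorem as stated. Second, for comparison: the closely analogous bound in this paper, Theorem~\ref{THM:RISPREL}, is proved by a geometric argument (showing the relevant points $Q_h$ lie in the tetrahedron $\mathcal{T}$ and invoking the convex-hull characterization of coherence), and that route requires logical independence; your prevision-of-an-auxiliary-gamble route avoids both the geometry and that hypothesis, which is exactly what the more general statement of Theorem~\ref{THM:TEOREMAAI13} calls for.
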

We give below the  notion of disjunction  of $n$ conditional events.
\begin{definition}\label{DEF:DISJn}	
Let  $n$ conditional events $E_1|H_1,\ldots,E_n|H_n$ be given.
For each  non-empty strict subset $S$  of $\{1,\ldots,n\}$,  let $y_{S}$ be a prevision assessment on $\bigvee_{i\in S} (E_i|H_i)$.
Then, the disjunction $(E_1|H_1) \vee \cdots \vee (E_n|H_n)$ is the conditional random quantity $\D_{1\cdots n}$ defined as
\begin{equation}\label{EQ:DF}
\begin{array}{lll}
\D_{1\cdots n}=(\bigvee_{i=1}^n E_iH_i+\sum_{\emptyset \neq S\subset \{1,2\ldots,n\}}y_{S}(\bigwedge_{i\in S} \no{H}_i)\wedge(\bigwedge_{i\notin S} \no{E}_i{H}_i))|(\bigvee_{i=1}^n H_i)=\\
=\left\{
\begin{array}{llll}
1, &\mbox{ if } \bigvee_{i=1}^n E_iH_i\, \mbox{ is true,} \\
0, &\mbox{ if } \bigwedge_{i=1}^n \no{E}_iH_i\, \mbox{ is true}, \\
y_{S}, &\mbox{ if } (\bigwedge_{i\in S} \no{H}_i)\wedge(\bigwedge_{i\notin S} \no{E}_i{H}_i)\, \mbox{ is true},  \; \emptyset \neq S\subset \{1,2\ldots,n\},\\
y_{1\cdots n}, &\mbox{ if } \bigwedge_{i=1}^n \no{H}_i \mbox{ is true},
\end{array}
\right.
\end{array}
\end{equation}
where
\[
y_{1\cdots n}=y_{\{1,\ldots, n\}}=\prev(\D_{1\cdots n})=\prev[(\bigvee_{i=1}^n E_iH_i+\sum_{\emptyset \neq S\subset \{1,2\ldots,n\}}y_{S}(\bigwedge_{i\in S} \no{H}_i)\wedge(\bigwedge_{i\notin S} \no{E}_i{H}_i))|(\bigvee_{i=1}^n H_i)].
\]
\end{definition}
For $n=1$ we obtain  $\D_1=E_1|H_1$. 
In the betting framework, you agree to pay $y_{1\cdots n}$ with the proviso that you will receive: \vspace{-0.3cm}
\begin{itemize}
	\item $1$, if at least one of the conditional events
is true;\vspace{-0.3cm}
\item $0$, if  all conditional events are false; \vspace{-0.3cm}
\item the prevision of the disjunction of that conditional events which are void,  otherwise. In particular you receive back $y_{1\cdots n}$ when all  conditional events are void.
\end{itemize}
As we can see from (\ref{EQ:CF}) and (\ref{EQ:DF}), the conjunction $\C_{1\cdots n}$  and the disjunction $\D_{1\cdots n}$ are (in general)  $(2^n+1)$-valued objects because the number of nonempty subsets $S$, and hence the number of possible values $x_S$,  is $2^n-1$. Of course, it may happen that the some of the possible values of $\C_{1\cdots n}$  and  $\D_{1\cdots n}$ coincide.
\begin{remark}\label{REM:CONGCONG}
Given a finite family  $\mathscr{E}$ of  conditional events,  their conjunction and disjunction are also  denoted by $\C(\mathscr{E})$ and $\D(\mathscr{E})$, respectively. We recall that in \cite{GiSa19}, given  two finite families of conditional events $\mathscr{E}'$ and $\mathscr{E}''$,  the objects $\mathcal \C(\mathscr{E}') \wedge  \C(\mathscr{E}'')$  and 
$\mathcal \D(\mathscr{E}') \vee \D(\mathscr{E}'')$
are defined as $\C(\mathscr{E}'\cup \mathscr{E}'')$ and $\D(\mathscr{E}'\cup \mathscr{E}'')$, respectively. Then, it is easy to verify the commutativity and associativity properties  of conjunction and  disjunction  (\cite[Propositions 1 and 2]{GiSa19}).
We recall below the notion of negation for conjoined and disjoined conditionals.
\end{remark}
\begin{definition}
Given  $n$ conditional events $E_1|H_1,\ldots,E_n|H_n$, 
 the negations for the conjunction $\C_{1\cdots n}$  and the disjunction $\D_{1\cdots n}$ are defined  as $\no{\C}_{1\cdots n}=1-\C_{1\cdots n}$ and
 $\no{\D}_{1\cdots n}=1-\D_{1\cdots n}$, respectively.
\end{definition}
Of course, if $n=1$ we obtain $\no{\C}_1=\no{\D}_1=\no{E_1|H_1}=1-E_1|H_1=\no{E}_1|H_1$.
We observe that conjunction and disjunction satisfy  De Morgan’s Laws (\cite[Theorem 5]{GiSa19}), that is
\begin{equation}\label{EQ:DEMORGAN}
\no{\D}_{1\cdots n}=\C_{\no{1}\cdots \no{n}	} \;\;\;\;(\text{i.e., } \D_{1\cdots n}=\no{\C}_{\no{1}\cdots \no{n}	}),\;\;\;\;  \no{\C}_{1\cdots n}=\D_{\no{1}\cdots \no{n}} \;\;\;\;(\text{i.e., } \C_{1\cdots n}=\no{\D}_{\no{1}\cdots \no{n}	}),
\end{equation}
where $\C_{\no{1}\cdots \no{n}	}=\bigwedge_{i=1}^n \no{E}_i|H_i$ and $\D_{\no{1}\cdots \no{n}	}=\bigvee_{i=1}^n \no{E}_i|H_i$.
As shown in  formula (\ref{EQ:DEMORGAN}),  by exploiting  negation,  disjunction could be equivalently defined as $\D_{1\cdots n}=\no{\C}_{\no{1}\cdots \no{n}	}=1-\C_{\no{1}\cdots \no{n}	}$.
\section{A decomposition formula for  conjunctions}
\label{SEC:DECOMP}

In this section we show that
the conjunction $\mathscr{C}_{1\cdots n}$ of $n$ conditional events can
be represented as the sum of two suitable conjunctions of $n+1$ conditional
events. We first give a preliminary result, which is related to Theorem~\ref{THM:FRECHET}, and a remark.
%
\begin{theorem}\label{THM:RISPREL}
	Let  $n$ conditional events $E_1|H_1,\ldots,E_{k}|H_{k},\ldots,E_{n}|H_{n}$  be given, with $E_1,H_1,\ldots,E_{n},H_{n}$ logically independent, and a coherent prevision assessment  $\M=(x_{ S}:\emptyset \neq S \subseteq\{1,\ldots,n\})$ on the family
	$\mathscr{F}=\{\C_{ S}:\emptyset \neq S \subseteq\{1,\ldots,n\}\}$.
	For every $1 \leq k \leq n-1$ it holds that 
	\[
	\max \{0, x_{1 \cdots k}+x_{k+1 \cdots n}-1\} \leq x_{1 \cdots n}  \leq \min \{x_{1 \cdots k}, x_{k+1 \cdots n}\},
	\]
	where 
	\[
	x_{1 \cdots k} = \prev(\C_{1 \cdots k}),\;\;\; x_{k+1 \cdots n} = \prev(\C_{k+1 \cdots n}),\;\;\; x_{1 \cdots n} = \prev(\C_{1 \cdots n}). 
	\]
\end{theorem}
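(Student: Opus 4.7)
The plan is to establish the upper and lower bounds of the claimed chain separately. The upper bound $x_{1\cdots n} \leq \min\{x_{1\cdots k}, x_{k+1\cdots n}\}$ is the easier one: by iteratively applying the monotonicity property (\ref{EQ:MONOT}) together with the commutativity/associativity of conjunction noted in Remark \ref{REM:CONGCONG}, one obtains the pointwise inequalities $\C_{1\cdots n} \leq \C_{1\cdots k}$ and $\C_{1\cdots n} \leq \C_{k+1\cdots n}$ between conditional random quantities. Remark \ref{REM:INEQ-CRQ} (the inequality version of Theorem \ref{THM:EQ-CRQ}) then transfers these to the corresponding inequalities between previsions.

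For the lower bound, nonnegativity of $x_{1\cdots n}$ is immediate from $\C_{1\cdots n}\in[0,1]$. The remaining bound $x_{1\cdots n} \geq x_{1\cdots k} + x_{k+1\cdots n} - 1$ I would prove by induction on $n$. The base case $n=2$ (necessarily $k=1$) is exactly Theorem \ref{THM:FRECHET}. For the inductive step I would aim to establish the pointwise inequality $\C_{1\cdots n} + 1 \geq \C_{1\cdots k} + \C_{k+1\cdots n}$ on the set where $\H_n=H_1\vee\cdots\vee H_n$ is true, and then take previsions via Remark \ref{REM:INEQ-CRQ}, invoking the linearity of prevision for CRQs sharing a common conditioning event.

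This pointwise inequality is verified by a case analysis. For each world where $\H_n$ is true, let $S_j$ denote the subset of indices in block $j$ for which $\no{H}_i$ is true; the hypothesis $\H_n$ forces $S_1 \cup S_2 \neq \{1,\ldots,n\}$. If some non-void conditioned event is false, then $\C_{1\cdots n}=0$ and at least one sub-conjunction also vanishes, reducing the inequality to $1 \geq \C$ for the other, which lies in $[0,1]$. Otherwise, all non-void conditioned events are true and the three CRQs take values $x_{S_1}$, $x_{S_2}$, $x_{S_1 \cup S_2}$ respectively (with the convention $x_\emptyset = 1$). The subcases with $S_1$ or $S_2$ empty collapse to equalities. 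The only non-trivial subcase has both $S_1, S_2$ non-empty, and the required inequality reads $x_{S_1 \cup S_2} \geq x_{S_1} + x_{S_2} - 1$, which is the theorem itself applied to the subfamily $\{E_i|H_i : i \in S_1 \cup S_2\}$ with split $(S_1, S_2)$; since $|S_1 \cup S_2| \leq n-1$ and the restriction of the coherent assessment $\M$ to any subfamily remains coherent, the inductive hypothesis applies.

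The main obstacle I anticipate is arranging the case analysis so that the non-trivial subcase genuinely produces a strictly smaller instance of the theorem; this is secured by the observation that $\H_n$ true forces at least one of $S_1, S_2$ to be a proper subset of its block, whence $|S_1 \cup S_2| \leq n-1$. A secondary subtlety is the passage from pointwise CRQ inequalities on $\H_n$ to inequalities of their previsions, which is precisely the content of Remark \ref{REM:INEQ-CRQ} and requires using the extended definition of $\C_{1\cdots k}$, $\C_{k+1\cdots n}$, $\C_{1\cdots n}$ as CRQs on the common conditioning event $\H_n$.
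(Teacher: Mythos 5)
Your proposal is correct and in substance follows the paper's own argument: the same induction on $n$ with base case Theorem \ref{THM:FRECHET}, resting on the same case analysis of the possible values of $(\C_{1\cdots k},\C_{k+1\cdots n},\C_{1\cdots n})$, where the inductive hypothesis is invoked exactly for the values $(x_{S'},x_{S''},x_{S'\cup S''})$ with $S'\cup S''\subset\{1,\ldots,n\}$. The only (cosmetic) difference is the lifting step: you transfer the values-level inequalities to previsions via Remark \ref{REM:INEQ-CRQ} (handling the upper bound separately by monotonicity), whereas the paper expresses the same fact geometrically, namely that coherence forces $\M_3$ into the convex hull of the points $Q_h$, all of which lie in the tetrahedron $\mathcal{T}$.
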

\begin{proof} We set $\M_3=(x_{1 \cdots k}, x_{k+1 \cdots n}, x_{1 \cdots n})$. Moreover, we observe that 
	\[
	\C_{1 \cdots k} \in \{1,0,x_{S'}; S' \subseteq \{1, \ldots, k\}\},\; \C_{k+1 \cdots n} \in \{1,0,x_{S''}; S'' \subseteq \{k+1, \ldots, n\}\}.
	\]
	The possible values $Q_h$'s of  the random vector $(\C_{1 \cdots k}, \C_{k+1 \cdots n}, \C_{1 \cdots n})$ are given in Table \ref{TAB:TABLEFRECHETk}. 
	\begin{table}[!ht]
		\centering
		\begin{tabular}{l|c|c|c|l}
			$C_h$                                  & $\C_{1\cdots k}$ &    $\C_{_{k+1}\cdots  n}$ & $\C_{1 \cdots n\, }$ & $Q_h$  \\
			\hline
			$ \bigwedge_{i=1}^nE_iH_i$           &  $1$  & $1$ & $1$ &   $(1,1,1)$   \\
			$(\bigwedge_{i=1}^k E_iH_i)(\bigvee_{i=k+1}^n \no{E}_iH_i)$           &  $1$  & 0 &$0$  &   $(1,0,0)$   \\
			$(\bigvee_{i=1}^k \no{E}_iH_i)(\bigwedge_{i=k+1}^n E_iH_i)$           &  $0$  & $1$ &$0$  &   $(0,1,0)$    \\
			$(\bigvee_{i=1}^k \no{E}_iH_i)(\bigvee_{i=k+1}^n \no{E}_iH_i)$           &  $0$  & $0$ &$0$  &   $(0,0,0)$   \\
			$ (\bigwedge_{i\notin S''}E_iH_i)(\bigwedge_{i\in S''}\no{H}_i 
			) $           &  $1$  & $x_{S''}$ &$x_{S''}$  &   $(1,x_{S''},x_{S''})$   \\
			$ (\bigvee_{i=1}^k\no{E}_iH_i)
			(\bigwedge_{i \in \{k+1,\ldots,n\}\setminus S''}E_iH_i)
			(\bigwedge_{i\in S''}\no{H}_i 
			) $           &  $0$  & $x_{S''}$ &$0$  &   $(0,x_{S''},0)$   \\
			$ (\bigwedge_{i\notin S'}E_iH_i)(\bigwedge_{i\in S'}\no{H}_i 
			) $           & $x_{S'}$ &1& $x_{S'}$  &   $(x_{S''},1,x_{S''})$   \\
			$ (\bigvee_{i=k+1}^n\no{E}_iH_i)
			(\bigwedge_{i \in \{1,\ldots,k\}\setminus S'}E_iH_i)
			(\bigwedge_{i\in S'}\no{H}_i 
			) $           & $x_{S'}$ &$0$  & $0$ &  $(x_{S'},0,0)$   \\
			$
			(\bigwedge_{i\, \notin\, S' \cup S''}E_iH_i)
			(\bigwedge_{i\,\in\, S' \cup S''}\no{H_i}) 
			$ &   $x_{S'}$ & $x_{S''}$ & $x_{S'\cup S''}$ &  $(x_{S'},x_{S''},x_{S'\cup S''})$\\
			$\bigwedge_{i=1}^n\no{H_i} $ &   $x_{1\cdots k}$ & $x_{k+1\cdots n}$ & $x_{1\cdots n}$ &  $(x_{1 \cdots k},x_{k+1 \cdots n},x_{1 \cdots n})$\\
		\end{tabular}
		\caption{Possible values $Q_h$'s of the random vector $(\C_{1\cdots k},\C_{k+1\cdots  n},\C_{1 \cdots n })$, where $\emptyset\neq S'\subseteq\{1,\ldots,k\}$, ${\emptyset\neq S''\subseteq\{k+1,\ldots,n\}}$,  $S'\cup S''\neq \{1,\ldots,n\}$, and  $(x_{1 \cdots k},x_{k+1 \cdots n},x_{1 \cdots n})=Q_0=\M_3$. } 
		\label{TAB:TABLEFRECHETk}
	\end{table}
	We denote by $\mathcal{T}$ the tetrahedron with vertices $(1,1,1),\, (1,0,0),\, (0,1,0),\, (0,0,0)$, that is 
	\[
	\mathcal{T}=\{(x,y,z): (x,y)\in[0,1]^2, \max \{0, x+y-1\} \leq z  \leq \min \{x, y\}\}.
	\]
	We observe that $\mathcal{T}$ is the convex hull of $(1,1,1),\, (1,0,0),\, (0,1,0),\, (0,0,0)$.
	We also observe that the points 
	$(1, x_{S''}, x_{S''}),\, (0, x_{S''}, 0),\, (x_{S'}, 1, x_{S'}),\, (x_{S'}, 0, 0)$ belong to $\mathcal{T}$ because 
	\[
	\begin{array}{ll}
	(1, x_{S''}, 
	x_{S''})=x_{S''}(1,1,1)+(1-x_{S''})(1,0,0), & (0,x_{S''},0)=x_{S''}(0,1,0)+(1-x_{S''})(0,0,0),\\
	(x_{S'},1, 
	x_{S'})=x_{S'}(1,1,1)+(1-x_{S'})(0,1,0),& ( x_{S'},0, 
	0)=x_{S'}(1,0,0)+(1-x_{S'})(0,0,0).
	\end{array}
	\]
	We recall that coherence of  $\M$ implies coherence of the sub-assessment
	$(x_{i},x_{j},x_{ij})$,   with $i\neq j$, on the sub-family $\{E_i|H_i,E_j|H_j,\mathscr{C}_{ij}\}$.
	By formula (\ref{EQ:PI2}), the coherence of $(x_{i},x_{j},x_{ij})$  amounts to the condition
	$(x_{i},x_{j},x_{ij})\in \mathcal{T}$.
	Now, let us assume by induction that the point $(x_{S'},x_{S''},x_{S' \cup S''})$ belongs to $\mathcal{T}$, for every pair of nonempty subsets $S' \subseteq \{1, \ldots, k\},\; S'' \subseteq \{k+1, \ldots, n\}$, with $S'\cup S''\subset \{1,\ldots,n\}$. Under this inductive hypothesis,  the convex hull of the points $Q_h$'s, with $Q_h \neq Q_0$, is the tetrahedron $\mathcal{T}$. Coherence of $\M_3$ requires that $\M_3$ belongs to the convex hulls of all the points $Q_h$'s ($h\neq 0$), that is  $\M_3 \in \mathcal{T}$. Then,  the inequalities
	\[
	\max \{0, x_{1 \cdots k}+x_{k+1 \cdots n}-1\} \leq x_{1 \cdots n}  \leq \min \{x_{1 \cdots k}, x_{k+1 \cdots n}\},
	\]
	are satisfied.  
\end{proof}
\begin{remark}\label{REM:CnCn+1}
Given the conjunction $	\C_{1\cdots n}$ of $n$ conditional events and  a further conditional event $E_{n+1}|H_{n+1}$, it holds that 
\begin{equation}\label{EQ:CnCn+1}
\begin{array}{lll}
\C_{1\cdots n+1}=\C_{1\cdots n}\wedge (E_{n+1}|H_{n+1})=
\left\{
\begin{array}{llll}
\C_{1\cdots n}, &\mbox{ if } E_{n+1}H_{n+1}\, \mbox{ is true,} \\
0, &\mbox{ if } \no{E}_{n+1}H_{n+1}\, \mbox{ is true}, \\
x_{{S}\cup{\{n+1\}}}, &\mbox{ if }  (\bigwedge_{i\in S} \no{H}_i)\wedge (\bigwedge_{i\notin S} E_i{H}_i) \wedge\no{H}_{n+1}\, \mbox{ is true}.
\end{array}
\right.
\end{array}
\end{equation}	
In particular
\begin{equation}\label{EQ:CONGPART}
\C_{1\cdots n}\wedge 0=0,\;\;\;
\C_{1\cdots n}\wedge 1=\C_{1\cdots n}.
\end{equation}	
Indeed, if $E_{n+1}H_{n+1}=\emptyset$, it holds that $P(E_{n+1}|H_{n+1})=x_{n+1}=0$ and hence $E_{n+1}|H_{n+1}=E_{n+1}H_{n+1}+x_{n+1}\no{H}_{n+1}=0$. As, by (\ref{EQ:MONOT}),  $\C_{1\cdots n+1}\leq E_{n+1}|H_{n+1}=0$, it follows that $\C_{1\cdots n+1}=\C_{1\cdots n}\wedge 0=0$.
	
If $H_{n+1}\subseteq E_{n+1}$, i.e., $E_{n+1}H_{n+1}=H_{n+1}$, it holds that $x_{n+1}=1$ and hence $E_{n+1}|H_{n+1}=E_{n+1}H_{n+1}+x_{n+1}\no{H}_{n+1}=H_{n+1}+\no{H}_{n+1}=1$;	
then   (\ref{EQ:CnCn+1}) becomes 
		\begin{equation*}
	\begin{array}{lll}
	\C_{1\cdots n+1}=
	\left\{
	\begin{array}{llll}
	\C_{1\cdots n}, &\mbox{ if } H_{n+1}\, \mbox{ is true}, \\
	x_{{S}\cup{\{n+1\}}}, &\mbox{ if }  (\bigwedge_{i\in S} \no{H}_i)\wedge (\bigwedge_{i\notin S} E_i{H}_i) \wedge\no{H}_{n+1}\, \mbox{ is true}.
	\end{array}
	\right.
	\end{array}
\end{equation*}	
For every nonempty subset $S\subseteq\{1,\ldots,n\}$, 
by Theorem \ref{THM:RISPREL} it holds that 
\[
\max \{0, x_{S}+x_{n+1}-1\} =x_{S}\leq x_{S\cup\{n+1\}}  \leq x_{S}=\min \{x_{S}, x_{n+1}\}.
\]
Then, $x_{S\cup\{n+1\}}=x_{S}$, which is the value of $\C_{1\cdots n}$ when $(\bigwedge_{i\in S} \no{H}_i)\wedge (\bigwedge_{i\notin S} E_i{H}_i)$ is true. Thus $\C_{1\cdots n+1}=\C_{1\cdots n}\wedge 1= \C_{1\cdots n}$.
\end{remark}
Concerning the decomposition formula, 
 we first examine the case $n=1$ (see also  \cite[Proposition 1]{SaPG17}).  We recall that, given a conditional event   $\C_{1}=E_1|H_1$, we denote its indicator by the same symbol. Then, given a further conditional event $E_2|H_2$, we show that (the indicator) $\C_1$ can be decomposed as the sum of the conjunctions $\C_{12}=(E_1|H_1)\wedge (E_2|H_2)$ and 
 $\C_{1\no{2}}=(E_1|H_1)\wedge (\no{E}_2|H_2)$.
 We set $P(E_1|H_1)=x_1$, $P(E_2|H_2)=x_2$, $\prev(\C_{12})=x_{12},  \prev(\C_{1 \no{2}})=x_{1 \no{2}}$.
The next result shows the decomposition of $\C_{1}$.
\begin{theorem}\label{THM:ADDITIVEn=1}
The conditionals $\C_{1},\C_{12},\C_{1\no{2}}$ satisfy the relation
\begin{equation}\label{EQ:ADDITIVEn=1}
\C_{1}=\C_{12} + \C_{1\no{2}}.
\end{equation}
\end{theorem}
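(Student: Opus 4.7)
The plan is to verify the identity case by case on the partition of constituents where $H_1\vee H_2$ is true, and then appeal to Theorem~\ref{THM:EQ-CRQ} to extend the equality to the remaining constituent $\no{H}_1\no{H}_2$.

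First, I would read off the values of $\C_{12}$ and $\C_{1\no{2}}$ from Definition~\ref{CONJUNCTION}, using $P(\no{E}_2|H_2)=1-x_2$. On each of the eight constituents contained in $H_1\vee H_2$ (assuming logical independence; otherwise the relevant constituents are a subset), I would tabulate the three quantities $\C_1$, $\C_{12}$, $\C_{1\no{2}}$ and check the sum. The interesting cases are: $E_1H_1E_2H_2$ gives $1=1+0$; $E_1H_1\no{E}_2H_2$ gives $1=0+1$; $\no{E}_1H_1H_2$ gives $0=0+0$; $E_1H_1\no{H}_2$ gives $1 = x_2 + (1-x_2)$; $\no{E}_1H_1\no{H}_2$ gives $0=0+0$; $\no{H}_1E_2H_2$ gives $x_1 = x_1+0$; and $\no{H}_1\no{E}_2H_2$ gives $x_1 = 0 + x_1$. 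Thus, when $H_1\vee H_2$ is true, the conditional random quantities $\C_1$ and $\C_{12}+\C_{1\no{2}}$ coincide.

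Next, I would identify the two sides as conditional random quantities with conditioning events $H_1$ and $H_1\vee H_2$, respectively, since by the sum rule recalled in Section~\ref{SECT:PRELIMINARIES} one has $\C_{12}+\C_{1\no{2}}$ with conditioning event $H_1\vee H_2$ and prevision $x_{12}+x_{1\no{2}}$. Since $H_1\vee(H_1\vee H_2)=H_1\vee H_2$, Theorem~\ref{THM:EQ-CRQ}(ii) applies: from agreement on $H_1\vee H_2$ being true we conclude both that $x_1 = x_{12}+x_{1\no{2}}$ and that the two objects coincide in all cases, including on $\no{H}_1\no{H}_2$ (where each side then takes the common value $x_1=x_{12}+x_{1\no{2}}$). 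This yields \eqref{EQ:ADDITIVEn=1}.

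The step that requires the most care is the bookkeeping on the constituent $E_1H_1\no{H}_2$, where $\C_{12}$ takes the value $x_2$ and $\C_{1\no{2}}$ takes the value $1-x_2$: here one must use explicitly that the conditional assessment on $\no{E}_2|H_2$ is forced by coherence to be $1-x_2$, so that the two ``undefined'' contributions combine to give exactly the indicator value $1$ of $E_1$. Apart from this small observation, the argument is essentially a direct application of Theorem~\ref{THM:EQ-CRQ}; no combinatorial inclusion–exclusion argument or induction is needed for the case $n=1$.
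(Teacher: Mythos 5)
Your proposal is correct and follows essentially the same route as the paper's proof: a case-by-case tabulation of $\C_{1}$, $\C_{12}$, $\C_{1\no{2}}$ on the constituents contained in $H_1\vee H_2$ (your values agree with the paper's Table~\ref{TAB:TABLEn=1}, including the key case $E_1H_1\no{H}_2$ where $x_2+(1-x_2)=1$), followed by an application of Theorem~\ref{THM:EQ-CRQ} to get $x_1=x_{12}+x_{1\no{2}}$ and hence equality on $\no{H}_1\no{H}_2$ as well. Your explicit remark that the two sides have conditioning events $H_1$ and $H_1\vee H_2$ is a slightly more careful statement of the hypothesis of Theorem~\ref{THM:EQ-CRQ} than the paper gives, but the argument is the same.
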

\begin{proof} 
Table~\ref{TAB:TABLEn=1} shows,  under logical independence of the events $E_1,E_2,H_1,H_2$, the possible values for the random vector $(\C_{1},\C_{1\,2},\C_{1\,\no{2}},  \C_{1\,2}+\C_{1\,\no{2}} )$ associated with the constituents $C_h'$s generated by the family $\{\C_{1},\C_{2}\}$.
\begin{table}[!h]
	\centering
	\begin{tabular}{l|l|c|c|c|c}
		& $C_h$                                   &  $\C_{1}$   &  $\C_{1\,2}$ &    $\C_{1\,\no{2}}$ &  $\C_{1\,2}+\C_{1\,\no{2}} $  \\
		\hline
		$C_1$    &$ E_1H_1       E_2H_2 $           &   1 & 1 & 0 &   1   \\
		$C_2$    & $E_1H_1       \no{E}_2H_2$     &   1 & 0 & 1&   1 \\
		$C_{3}$ & $\no{E}_1H_1  E_2H_2 $         &   0 & 0& 0&   0\\ 		
		$C_{4}$ & $\no{E}_1H_1  \no{E}_2H_2$     &  0 & 0 & 0&    0 \\
		$C_{5}$ & $\no{H}_1     E_2H_2$        & $x_1$ & $x_1$& 0&   $x_1$ \\
		$C_{6}$ & $\no{H}_1     \no{E}_2H_2$      & $x_1$ &0 & $x_1$ & $ x_1$\\
		$C_{7}$ & $E_1H_1  \no{H}_2$        &   1 & $x_2$ & $1-x_2$& 1\\ 		
		$C_{8}$ & $\no{E}_1H_1  \no{H}_2$      &   0 & 0 & 0& 0\\
		$ C_0$    &$ \no{H}_1     \no{H}_2$       & $x_1$ &$x_{1\,2}$ & $x_{1\,\no{2}}$ &  $x_{1\,2}+x_{1\,\no{2}}$ \\
		\hline 
	\end{tabular}
	\caption{Numerical values of the random vector $(\C_{1},\C_{1\,2},\C_{1\,\no{2}},  \C_{1\,2}+\C_{1\,\no{2}} )$.}
		\label{TAB:TABLEn=1}
\end{table}
We observe that  both  $C_{1\,2}$ and $C_{1\,\no{2}}$ are conditional random quantities with   the same conditioning event $H_1\vee H_2$ and hence $\C_{1\,2}+\C_{1\,\no{2}}$ is still a conditional random quantity with conditioning event $H_1\vee H_2$. 
As shown in Table \ref{TAB:TABLEn=1}, for each  $C_h\subseteq H_1 \vee H_2$ (i.e., $h=1,\ldots,8$), if $C_h$ is true then  $\C_{1}$ coincides with  $\C_{12}+\C_{1\no{2}}$. In other words, $\C_{1}$ coincides with  $\C_{12}+\C_{1\no{2}}$ when $H_1\vee H_2$ is true.  Thus,  by  Theorem  \ref{THM:EQ-CRQ}, it holds that   
\[
x_1=\prev(\C_{1})=\prev(\C_{12}+\C_{1\no{2}})=\prev(\C_{12})+\prev(\C_{1\no{2}})=x_{12}+x_{1\no{2}};
\]
then   $\C_{1}$ coincides with  $\C_{12}+\C_{1\no{2}}$  when $C_0$ is true. Therefore $\C_{1}$ and $\C_{12}+\C_{1\no{2}}$  coincide in all cases;
that is
$\C_{1}= \C_{12}+\C_{1\no{2}}$.  In case of some logical dependencies, some constituent $C_h$ may be impossible; but,  of course, the relation $\C_{1}= \C_{12}+\C_{1\no{2}}$ is still valid.
\end{proof}
By the same reasoning, $\C_{\no{1}}=\C_{\no{1}2}+\C_{\no{1}\, \no{2}}$, where $\C_{\no{1} 2}=\C_{\no{1}} \wedge \C_{2}$, $\C_{\no{1}\, \no{2}}=\C_{\no{1}} \wedge \C_{\no{2}}$.

 We observe that by Remark \ref{REM:CONGCONG}, given $n+1$ conditional events  $E_1|H_1,\ldots, E_{n+1}|H_{n+1}$, their conjunction  $\C_{1\cdots n+1}$ coincides with $\C_{1\cdots n}\wedge(E_{n+1}|H_{n+1})$. Likewise, $\C_{1\cdots n\no{n+1}}$ coincides with $\C_{1\cdots n}\wedge(\no{E}_{n+1}|H_{n+1})$.
The next result shows the decomposition for the conjunction of $n$ conditional events.
\begin{theorem}\label{THM:ADDITIVEn}
Let  $n+1$ conditional events $E_1|H_1,\ldots, E_{n+1}|H_{n+1}$  be given.   
It holds that 
\begin{equation}\label{EQ:ADDITIVEn}
\C_{1\cdots n}=\C_{1\cdots n+1}+\C_{1\cdots n\, \no{n+1}}.
\end{equation}
\end{theorem}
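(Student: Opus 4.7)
Our plan is to prove (\ref{EQ:ADDITIVEn}) by induction on $n$. The base case $n=1$ is Theorem~\ref{THM:ADDITIVEn=1}. Both sides of (\ref{EQ:ADDITIVEn}) are finite conditional random quantities: the left-hand side has conditioning event $\bigvee_{i=1}^{n}H_{i}$, while the right-hand side, being the sum of two conditional random quantities that share the conditioning event $\bigvee_{i=1}^{n+1}H_{i}$, has conditioning event $\bigvee_{i=1}^{n+1}H_{i}$. By Theorem~\ref{THM:EQ-CRQ} it therefore suffices to verify that the two sides coincide on each constituent contained in $\bigvee_{i=1}^{n+1}H_{i}$.

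We split into cases according to the status of $E_{n+1}$ and $H_{n+1}$. When $E_{n+1}H_{n+1}$ is true, formula~(\ref{EQ:CnCn+1}) gives $\C_{1\cdots n+1}=\C_{1\cdots n}$, and since $\no{E}_{n+1}|H_{n+1}=0$ in this case, formula~(\ref{EQ:CONGPART}) yields $\C_{1\cdots n\,\no{n+1}}=0$; hence both sides equal $\C_{1\cdots n}$. The case $\no{E}_{n+1}H_{n+1}$ true is symmetric. If $\no{H}_{n+1}$ is true and some $\no{E}_{i}H_{i}$ with $i\leq n$ also holds, then $\C_{1\cdots n}=0$, and the monotonicity relation~(\ref{EQ:MONOT}) forces both summands on the right to vanish as well.

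The substantive case is $\no{H}_{n+1}$ true with $E_{i}H_{i}\vee\no{H}_{i}$ true for every $i\leq n$. Let $S=\{i\leq n:\no{H}_{i}\text{ is true}\}$, which is a proper subset of $\{1,\ldots,n\}$ since $\bigvee_{i=1}^{n+1}H_{i}$ is assumed true. By~(\ref{EQ:CF}) the left-hand side equals $1$ when $S=\emptyset$ and $x_{S}=\prev(\C_{S})$ when $S\neq\emptyset$. On the right, the $(n+1)$-fold form of~(\ref{EQ:CF}) applied to the families $\{E_{i}|H_{i}:i\leq n+1\}$ and $\{E_{1}|H_{1},\ldots,E_{n}|H_{n},\no{E}_{n+1}|H_{n+1}\}$ delivers the value $\prev(\C_{S}\wedge(E_{n+1}|H_{n+1}))+\prev(\C_{S}\wedge(\no{E}_{n+1}|H_{n+1}))$. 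For $S=\emptyset$ this reduces to $1=x_{n+1}+(1-x_{n+1})$. For $\emptyset\neq S\subsetneq\{1,\ldots,n\}$ we have $|S|<n$; the inductive hypothesis applied to the family $\{E_{i}|H_{i}:i\in S\}\cup\{E_{n+1}|H_{n+1}\}$ provides the identity $\C_{S}=\C_{S}\wedge(E_{n+1}|H_{n+1})+\C_{S}\wedge(\no{E}_{n+1}|H_{n+1})$, and passing to previsions yields exactly the equality required.

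The main obstacle is this last subcase: a direct constituent-by-constituent check leaves behind an identity between previsions that is itself the prevision-level shadow of the very theorem being proved. Induction on $n$ breaks the circularity, because the prevision identity we need concerns the strictly smaller conjunction $\C_{S}$, for which the statement is already available.
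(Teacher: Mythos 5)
Your proof is correct and follows essentially the same route as the paper's: strong induction on the size of the conjunction, a case split on the status of $E_{n+1}H_{n+1}$, $\no{E}_{n+1}H_{n+1}$ and $\no{H}_{n+1}$, the inductive hypothesis invoked at the prevision level in the substantive subcase, and Theorem~\ref{THM:EQ-CRQ} to transfer the identity to the constituent where all conditioning events are false. The only quibble is your citation of~(\ref{EQ:CONGPART}) to get $\C_{1\cdots n\,\no{n+1}}=0$ when $E_{n+1}H_{n+1}$ is true: that formula concerns the degenerate case $E_{n+1}H_{n+1}=\emptyset$, whereas the correct reference is~(\ref{EQ:CnCn+1}) applied with $\no{E}_{n+1}|H_{n+1}$ in place of $E_{n+1}|H_{n+1}$, which yields the same conclusion immediately.
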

\begin{proof}
We recall that $x_{1\cdots n}=\prev(\C_{1\cdots n})$, $x_{n+1}=P(E_{n+1}|H_{n+1})$,  $x_{1\cdots n+1}=\prev(\C_{1\cdots n+1})$, and $x_{1\cdots n\, \no{n+1}}=\prev(\C_{1\cdots n\,\no{n+1}})$. Moreover, given any nonempty strict subset $S=\{i_1,\ldots,i_k\}$ of $\{1,2,\ldots,n\}$, we set \[
\C_{S}=\C_{i_1\cdots i_k}=\bigwedge_{j\in S} (E_{j}|H_{j}),\;\; \C_{S \cup \{n+1\}}=\C_{S} \wedge E_{n+1}|H_{n+1},\;\; \C_{S \cup \{\no{n+1}\}}=\C_{	S} \wedge \no{E}_{n+1}|H_{n+1}, 
\]
and 
\[
x_{S}=\prev(\C_{S}),\;\; x_{S \cup \{n+1\}}=\prev(\C_{S \cup \{n+1\}}),\;\; x_{S \cup \{\no{n+1}\}}=\prev(\C_{S \cup \{\no{n+1}\}}). 
\]
We prove the theorem by induction on the cardinality of $S$, denoted by $s$. By Theorem \ref{THM:ADDITIVEn=1}  the equality (\ref{EQ:ADDITIVEn}) holds for $n=1$. We assume that (\ref{EQ:ADDITIVEn}) holds for each integer $s < n$, that is: $\C_{S}=\C_{S \cup \{\no{n+1}\}}+\C_{S \cup \{\no{n+1}\}}$; then, we prove that 
(\ref{EQ:ADDITIVEn}) holds for $s = n$, that is: $\C_{1\cdots n}=\C_{1 \cdots n+1}+\C_{1 \cdots n \no{n+1}}$.\\ We first assume logical independence of the events $E_i,H_i, i=1,\ldots,n+1$. We distinguish the following cases: $(i)$ $E_{n+1}H_{n+1}$ true; $(ii)$ $\no{E}_{n+1}H_{n+1}$ true; $(iii)$ $\no{H}_{n+1}$ true. \\
Case $(i)$.  From (\ref{EQ:CnCn+1})  it holds that $\C_{1 \cdots n+1}  = \C_{1 \cdots n}$ and $\C_{1 \cdots n\,\no{n+1}}= 0$, so that $\C_{1\cdots n}=\C_{1 \cdots n+1}+\C_{1 \cdots n\,\no{n+1}}$. \\
Case $(ii)$. From (\ref{EQ:CnCn+1}) it holds that $\C_{1 \cdots n+1}=0$ and $\C_{1 \cdots n\,\no{n+1}} = \C_{1\cdots n}$, so that $\C_{1\cdots n}=\C_{1 \cdots n+1}+\C_{1 \cdots n\,\no{n+1}}$. \\
Case $(iii)$. We distinguish the following subcases: $(a) \bigwedge_{i=1}^nE_iH_i$ true;
$(b) \bigvee_{i=1}^n\no{E}_iH_i$ true;
$(c)$ $ (\bigwedge_{i\in S} \no{H}_i)\wedge(\bigwedge_{i\notin S} E_i{H}_i)$ true, for some nonempty $S\subset\{1,\ldots,n\}$; $(d)$ $\bigwedge_{i=1}^{n+1} \no{H}_i$ true. \\
In the subcase $(a)$ it holds that $\C_{1 \cdots n}=1$, $\C_{1 \cdots n+1}=x_{n+1}$, and $\C_{1 \cdots n\, \no{n+1}}=1-x_{n+1}$; so that $\C_{1\cdots n}=\C_{1 \cdots n+1}+\C_{1 \cdots n\,\no{n+1}}$.\\
In the subcase $(b)$ it holds that $\C_{1 \cdots n}=\C_{1 \cdots n+1}=\C_{1 \cdots n\, \no{n+1}}=0$; so that $\C_{1\cdots n}=\C_{1 \cdots n+1}+\C_{1 \cdots n\,\no{n+1}}$.\\
In the subcase $(c)$ it holds that $\C_{1 \cdots n}=x_{S}$, $\C_{1 \cdots n+1}=x_{S\cup\{n+1\}}$, and $\C_{1 \cdots n\, \no{n+1}}=x_{S\cup\{\no{n+1}\}}$. By the inductive hypothesis it follows that
 $x_{S}=x_{S\cup\{n+1\}}+x_{S\cup\{\no{n+1}\}}$, 
so that $\C_{1\cdots n}=\C_{1 \cdots n+1}+\C_{1 \cdots n\,\no{n+1}}$.\\
In the subcase $(d)$ it holds that $\C_{1 \cdots n}=x_{1\cdots n}$, $\C_{1 \cdots n+1}=x_{1\cdots n+1}$, and $\C_{1 \cdots n\, \no{n+1}}=x_{1\cdots n\,\no{n+1}}$. 
We observe that $\C_{1 \cdots n}$ is a conditional random quantity with conditioning event  $H_1\vee \cdots \vee H_{n}$. Moreover,  
  both  $\C_{1 \cdots n+1}$ and $\C_{1 \cdots n\, \no{n+1}}$ are conditional random quantities with   the same conditioning event $H_1\vee \cdots \vee H_{n+1}$ and hence $\C_{1 \cdots n+1}+\C_{1 \cdots n\, \no{n+1}}$ is still a conditional random quantity with conditioning event $H_1\vee \cdots \vee H_{n+1}$.  Finally, we observe that 
$\C_{1 \cdots n}$ and $\C_{1 \cdots n+1}+\C_{1 \cdots n\, \no{n+1}}$ coincide
when
 $H_1\vee \cdots \vee H_{n+1}$ is true.  
 Then, by applying  Theorem \ref{THM:EQ-CRQ}  with $X|H=\C_{1 \cdots n}$ and  $Y|K=\C_{1 \cdots n+1}+\C_{1 \cdots n\, \no{n+1}}$, it holds that   
$
x_{1\cdots n}=x_{1\cdots n+1}+x_{1\cdots n\, \no{n+1}}$, so that
$\C_{1 \cdots n}=\C_{1 \cdots n+1}+\C_{1 \cdots n\, \no{n+1}}$.\\
In conclusion, $\C_{1 \cdots n}$ and $\C_{1 \cdots n+1}+\C_{1 \cdots n\, \no{n+1}}$ coincide in all cases; that is $\C_{1 \cdots n}=\C_{1 \cdots n+1}+\C_{1 \cdots n\, \no{n+1}}$ (see also Table~\ref{TAB:TABLEn}). In case of some logical dependencies, some constituent $C_h$ may be impossible; but,  of course, the relation $\C_{1 \cdots n}= \C_{1\cdots  n+1}+\C_{1 \cdots n\, \no{n+1}}$ is still valid.
\begin{table}[!h]
	\centering
	\begin{tabular}{l|c|c|c|c}
 $C_h$                                  & $\C_{1\cdots n}$ &    $\C_{1\cdots  n+1}$ & $\C_{1 \cdots n\, \no{n+1}}$ & $\C_{1\cdots  n+1}+\C_{1 \cdots n\, \no{n+1}}$  \\
 \hline
$ E_{n+1}H_{n+1}$           &  $\C_{1\cdots n}$  & $\C_{1\cdots n}$ & 0 &   $\C_{1\cdots n}$   \\
$ \no{E}_{n+1}H_{n+1}$           &  $\C_{1\cdots n}$  & 0 &$\C_{1\cdots n}$  &   $\C_{1\cdots n}$   \\
 $ (\bigwedge_{i=1}^nE_iH_i) \no{H}_{n+1}$           &  $1$  & $x_{n+1}$ &$1-x_{n+1}$  &   $1$   \\
 $ (\bigvee_{i=1}^n\no{E}_iH_i) \no{H}_{n+1}$           &  $0$  & 0 &0  &   $0$   \\
$  (\bigwedge_{i\in S} \no{H}_i\bigwedge_{i\notin S} E_i{H}_i) \no{H}_{n+1}$ & $x_{S}$ & $x_{S\cup\{n+1\}} $
& $x_{S\cup\{\no{n+1}\}}$ & $x_{S}$\\
$  \bigwedge_{i=1}^{n+1} \no{H}_i$ & $x_{1\cdots n}$& $x_{1\cdots n+1}$ & $x_{1\cdots n\, \no{n+1}}$ &
$x_{1\cdots n}$
	\end{tabular}
	\caption{Numerical values of the conditional random quantities $\C_{1\cdots n},\C_{1\cdots  n+1},\C_{1 \cdots n\, \no{n+1}},\C_{1\cdots  n+1}+\C_{1 \cdots n\, \no{n+1}}$.  Each $S$ is a nonempty strict subset of $\{1,\ldots,n\}$.} 
	\label{TAB:TABLEn}
\end{table}
 \end{proof}
	Given any integer $n\geq 1$  and $n$ conditional events $E_1|H_1,\ldots, E_n|H_n$, we set $\C_{1^* \cdots n^*}=\bigwedge_{i=1}^n E_{i}^*|H_{i}$, where for each index $i$ it holds that $i^*\in\{i,\no{i}\}$ and $E_{i}^*=E_i$, or $E_{i}^*=\no{E}_i$, according to whether $i^*=i$, or $i^*=\no{i}$, respectively. In particular $\C_{1^*}=\C_1=E_1|H_1$ when $1^*=1$ and  $\C_{1^*}=\C_{\no{1}}=\no{E}_1|H_1$ when $1^*=\no{1}$.
Moreover, 	given any subset  $\{i_1, \ldots, i_h\} \subseteq \{1,\ldots,n\}$, by defining $\{i_{h+1}, \ldots, i_n\}=\{1,\ldots,n\}\setminus \{i_1, \ldots, i_h\}$, we set 
	\begin{equation}
\label{EQ:COSTPA}
\C_{i_1\cdots i_h \no{i_{h+1}}\cdots \no{i_n}}=
(E_{i_1}|H_{i_1})\wedge 
\cdots
\wedge 
(E_{i_h}|H_{i_h})
\wedge 
(\no{E}_{i_{h+1}}|H_{i_{h+1}})
\wedge 
\cdots
\wedge 
(\no{E}_{i_{n}}|H_{i_{n}}).
\end{equation}
We recall that by definition the value of  $\C_{i_1\cdots i_h \no{i_{h+1}}\cdots
 \no{i_n}}$, when the conditioning events $H_1,\ldots, H_n$  are all false, is its prevision $\prev(\C_{i_1\cdots i_h \no{i_{h+1}}\cdots \no{i_n}})$. We set
\begin{equation}\label{EQ:LOGPROB}
\prev(\C_{i_1\cdots i_h \no{i_{h+1}}\cdots \no{i_n}})=x_{i_1i_2\cdots i_h \no{i_{h+1}} \cdots \no{i_n}},\;\; \forall\; \{i_1, \ldots, i_h\} \subseteq \{1,\ldots,n\}.
\end{equation}
Notice that, as the operation of conjunction is commutative, for each conjunction $\C_{1^* \cdots n^*}$ it holds that 
$\C_{1^* \cdots n^*} = \C_{i_1 \cdots i_h\no{i_{h+1}} \cdots \no{i_n}}$, for a suitable subset $\{i_1, \ldots, i_h\} \subseteq \{1,\ldots,n\}$. Then, given a further conditional event $E_{n+1}|H_{n+1}$, by the same reasoning of Theorem \ref{THM:ADDITIVEn} it holds that
	\begin{equation}\label{EQ:CnStar}
	\C_{1^* \cdots n^*}=\C_{1^*\cdots  n^*n+1}+\C_{1^* \cdots n^*\, \no{n+1}},\;\;\forall (1^*,\ldots, n^*)\in \{1,\no{1}\}\times \cdots \times \{n,\no{n}\},
	\end{equation}
or equivalently
\begin{equation}\label{EQ:CnStarbis}
	\C_{i_1 \cdots i_h\no{i_{h+1}} \cdots \no{i_n}}=\C_{i_1 \cdots i_h\no{i_{h+1}} \cdots \no{i_n} n+1}+\C_{i_1 \cdots i_h\no{i_{h+1}} \cdots \no{i_n}\, \no{n+1}},\;\;\forall \{i_1, \ldots, i_h\}\subseteq \{1,\ldots,n\}.
	\end{equation}
For instance,  it holds that: \;
$\C_{\no{1}2 \no{3}}=\C_{\no{1}2 \no{3}4}+\C_{\no{1}2 \no{3}\,\no{4}}$; \;  
$\C_{\no{1}2 \no{4}}=
\C_{\no{1}2 \no{4}3}+\C_{\no{1}2 \no{4}\,\no{3}}=
\C_{\no{1}2 3\no{4}}+\C_{\no{1}2 \no{3}\,\no{4}}$, and so on. 
\section{The set of conditional constituents}
\label{SEC:CONDCONST}

In this section we show that a notion of ``constituent'', which we call
\emph{conditional constituent}, can be introduced for the case of $n$ conditional
events $E_{1}|H_{1}, \ldots , E_{n}|H_{n}$. We recall that, given $n$ (unconditional) events
$E_{1},\ldots ,E_{n}$ and denoting the set of their constituents by
$\{C_{h}, h=1,\ldots ,m\}$, where $m\leqslant 2^{n}$ (with $m=2^{n}$ in case
of logical independence), it holds that
%
\begin{equation}\label{EQ:PARTIZIONE}
\begin{array}{ll}
(i) \;\; C_h \wedge C_k = \emptyset,\, \forall \, h \neq k; \;\;
& (ii) \;\;\bigvee_{h=1}^{m} C_h  = \Omega. 
\end{array}
\end{equation}
In terms of indicators (denoted by the same symbols) formula (\ref{EQ:PARTIZIONE}) becomes: 
\begin{equation}\label{EQ:PARTIZIONE'}
\begin{array}{ll}
(i)' \;\; C_h \wedge C_k = 0,\, \forall \, h \neq k; \;\;
& (ii)' \;\;\sum_{h=1}^{m} C_h  = 1,
\end{array}
\end{equation}
with $C_h\geq0$, $h=1,\ldots,m$.
Then,  it holds that: 
\begin{equation}\label{EQ:PROPADD}
\begin{array}{ll}
E_j =\sum_{h:\,C_h \subseteq E_j} C_h, &  P(E_j) = \sum_{h:\,C_h \subseteq E_j} P(C_h);\;\; j=1,\ldots,n.
\end{array}
\end{equation}
	We introduce  the set of conditional
constituents associated with $n$ conditional events, by obtaining some
properties which are analogous to those valid for the unconditional events.
Indeed, we will show that properties $(i)'$ and $(ii)'$ in (\ref{EQ:PARTIZIONE'}),
still hold if we replace events, and their constituents, by conditional
events, and their conditional constituents, respectively. In other words,
the conditional constituents are \emph{incompatible} (i.e., their conjunction
is 0) and their sum is 1. 

Moreover, likewise formula (\ref{EQ:PROPADD}), we will show that the indicator
of each conditional event, and its prevision, can be decomposed as the
sum of suitable conditional constituents, and their previsions, respectively.

In addition, as in the case of unconditional events, the conditional constituents
associated with a family of $n$ conditional events
$\{E_{1}|H_{1},\ldots , E_{n}|H_{n}\}$ are all the (non zero) conjunctions
$(A_{1}|H_{1})\wedge \cdots \wedge (A_{n}|H_{n})$, where
$A_{i}\in \{E_{i},\overline{E}_{i}\}$, $i=1,\ldots ,n$.
%
%
%
\begin{definition}
 The set of conditional constituents, or \emph{c-constituents}, associated with  a family  of $n$ conditional events $\mathscr{E}=\{E_1|H_1,\ldots, E_n|H_n\}$ is
\[
\K=\{\C_{i_1\cdots i_h \no{i_{h+1}} \cdots \no{i_n}}:\;\; \{i_1,\cdots, i_h\}\subseteq \{1,\ldots,n\}, \C_{i_1\cdots i_h \no{i_{h+1}} \cdots \no{i_n}}\neq 0\},
\]
where each  c-constituent $\C_{i_1\cdots i_h \no{i_{h+1}}\cdots \no{i_n}}$  is a conjunction, as defined in $(\ref{EQ:COSTPA})$.
\end{definition}	
Notice that the cardinality of $\K$ is $2^n$ when 
the events $E_1,\ldots,E_n, H_1,\ldots, H_n$ are  logically independent.
In the presence of some logical dependencies it may be that $\C_{i_1i_2\cdots i_h \no{i_{h+1}} \cdots \no{i_n}}=0$ for some $\{i_1, \ldots, i_h\}\subseteq \{1,\ldots,n\}$, as shown in the example below. If $\C_{i_1i_2\cdots i_h \no{i_{h+1}} \cdots \no{i_n}}$ coincides with 0, then it is not included in the set $\K$.
\begin{example}
Given two logically independent events $E,H$ let us consider the  family $\mathscr{E}=\{E_1|H_1,E_2|H_2\}$, where $E_1=E, E_2=\no{E}, H_1=H_2=H$. 
We observe that $\K \subseteq \{\C_{12},\C_{1\no{2}},\C_{\no{1}2},\C_{\no{1}\,\no{2}}\}$, where, by recalling  that $E|H=EH+P(E|H)\no{H}$ and hence $\emptyset|H=0$, it holds that 
\[
\C_{12}=(E|H)\wedge (\no{E}|H)=\emptyset|H=0=\C_{\no{1}\,\no{2}},\, \C_{1\no{2}}=(E|H)\wedge (E|H)=E|H,\,
\C_{\no{1}2}=(\no{E}|H)\wedge (\no{E}|H)=\no{E}|H.
\]
As we can see, in this case there are two c-constituents which are not zero; that is: $\K = \{\C_{1\no{2}},\C_{\no{1}2}\} = \{E|H,\no{E}|H\} = \mathscr{E}$.
\end{example}
In the next result we show  that the properties $(i)'$ and $(ii)'$ in (\ref{EQ:PARTIZIONE'}), 
relative to  unconditional events, still hold for the case of  conditional events. 
\begin{theorem}\label{THM:INCOMP}
	Given a family of $n$ conditional events $\mathscr{E}=\{E_1|H_1,\ldots, E_{n}|H_{n}\}$, let $\K$ be the set of c-constituents associated with $\mathscr{E}$. 	It holds that 
	\begin{equation}\label{EQ:INCOMPCONG}
	\C_{i_1\cdots i_h \no{i_{h+1}}\cdots \no{i_n}}
	\wedge \C_{j_1\cdots j_k \no{j_{k+1}}\cdots \no{j_n}}=\emptyset|(H_1\vee \cdots \vee H_n)=0,  \;\; \forall \{i_1,i_2,\ldots,i_h\}\neq  \{j_1,j_2,\ldots,j_k\}.
	\end{equation}
\end{theorem}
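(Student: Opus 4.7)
The plan is to reduce the claim to the single-index identity $(E_r|H_r)\wedge(\no{E}_r|H_r)=0$, and then propagate it through commutativity, associativity, and monotonicity of conjunction. Since the subsets $\{i_1,\ldots,i_h\}$ and $\{j_1,\ldots,j_k\}$ are distinct, their symmetric difference is nonempty; pick any index $r$ belonging to exactly one of them. Without loss of generality $r\in\{i_1,\ldots,i_h\}\setminus\{j_1,\ldots,j_k\}$, so that, by (\ref{EQ:COSTPA}), the conjunct $E_r|H_r$ appears in $\C_{i_1\cdots i_h\no{i_{h+1}}\cdots\no{i_n}}$ while $\no{E}_r|H_r$ appears in $\C_{j_1\cdots j_k\no{j_{k+1}}\cdots\no{j_n}}$. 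By Remark \ref{REM:CONGCONG} (commutativity and associativity), the conjunction of the two c-constituents coincides with the conjunction of their combined family, which contains both $E_r|H_r$ and $\no{E}_r|H_r$.

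Next I would establish $(E_r|H_r)\wedge(\no{E}_r|H_r)=0$ identically. Applying Definition \ref{CONJUNCTION} with $E_1=E_r$, $H_1=H_r$, $E_2=\no{E}_r$, $H_2=H_r$, the conjunctions $E_rH_r\wedge\no{E}_rH_r$, $\no{H}_r\no{E}_rH_r$, and $\no{H}_rE_rH_r$ are all impossible, while $\no{E}_rH_r\vee E_rH_r=H_r$. Hence $(E_r|H_r)\wedge(\no{E}_r|H_r)$ takes the value $0$ whenever $H_r$ is true, and equals its own prevision $z$ whenever $\no{H}_r$ is true (its conditioning event being $H_r\vee H_r=H_r$). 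Taking $Y|K\equiv 0$ in Theorem \ref{THM:EQ-CRQ}, coincidence on $H_r$ of the two random quantities forces equality of their previsions, hence $z=0$, so $(E_r|H_r)\wedge(\no{E}_r|H_r)=0$ in all cases.

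To conclude, monotonicity (\ref{EQ:MONOT}) applied iteratively to the combined family, or equivalently the regrouping $\C^*\wedge[(E_r|H_r)\wedge(\no{E}_r|H_r)]=\C^*\wedge 0$ followed by (\ref{EQ:CONGPART}), yields
\[
0\,\leq\,\C_{i_1\cdots i_h\no{i_{h+1}}\cdots\no{i_n}}\wedge\C_{j_1\cdots j_k\no{j_{k+1}}\cdots\no{j_n}}\,\leq\,(E_r|H_r)\wedge(\no{E}_r|H_r)=0,
\]
which is (\ref{EQ:INCOMPCONG}). The main obstacle is the middle step: the Fr\'echet--Hoeffding inequality of Theorem \ref{THM:RISPREL} applied to the complementary pair only gives $0\leq z\leq\min\{x_r,1-x_r\}$, which is not sharp enough to pin $z$ to $0$, so it is essential to invoke Theorem \ref{THM:EQ-CRQ} and force the prevision of the pair's conjunction to vanish.
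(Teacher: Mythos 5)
Your proposal is correct and follows essentially the same route as the paper's proof: both locate an index $r$ in the symmetric difference of the two index sets, reduce to $(E_r|H_r)\wedge(\no{E}_r|H_r)=0$, and propagate this through the regrouped conjunction (the paper writes this as $(\emptyset|H_{i_1})\wedge\cdots=\emptyset|(H_1\vee\cdots\vee H_n)=0$, while you invoke monotonicity and $\C^*\wedge 0=0$). The only difference is one of detail: the paper simply asserts $(E_{i_1}|H_{i_1})\wedge(\no{E}_{i_1}|H_{i_1})=(E_{i_1}\wedge\no{E}_{i_1})|H_{i_1}=\emptyset|H_{i_1}$, whereas you justify the same identity from Definition \ref{CONJUNCTION} and Theorem \ref{THM:EQ-CRQ}, which is a harmless elaboration.
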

\begin{proof}
As $\{i_1,\ldots,i_h\}\neq  \{j_1,\ldots,j_k\}$, the set
$(\{i_1,\ldots,i_h\} \setminus  \{j_1,\ldots,j_k\})\cup(  \{j_1,\ldots,j_k\} \setminus \{i_1,\ldots,i_h\}) 
$ is non empty. Let 
$r$ be one of its elements.
For the sake of simplicity, we assume  that $r=i_1=j_{k+1}$.  Then
\[
(E_{i_1}|H_{i_1})\wedge (\no{E}_{j_{k+1}}|H_{j_{k+1}})=
(E_{i_1}|H_{i_1})\wedge (\no{E}_{i_1}|H_{i_1})=(E_{i_1}\wedge \no{E}_{i_1})|H_{i_1}=\emptyset|H_{i_1}=0,
\]
and hence
\[
\begin{array}{ll}
\C_{i_1\cdots i_h \no{i_{h+1}}\cdots \no{i_n}}
\wedge \C_{j_1\cdots j_k \no{j_{k+1}}\cdots \no{j_n}}=(E_{i_1}|H_{i_1})\wedge (\no{E}_{j_{k+1}}|H_{j_{k+1}})
\wedge \C_{i_2\cdots i_h \no{i_{h+1}}\cdots \no{i_n}}
\wedge \C_{j_1\cdots j_k \no{j_{k+2}}\cdots \no{j_n}}=\\
=(\emptyset|H_{i_1})  \wedge \C_{i_2\cdots i_h \no{i_{h+1}}\cdots \no{i_n}}
\wedge \C_{j_1\cdots j_k \no{j_{k+2}}\cdots \no{j_n}}=\emptyset|(H_1 \vee \cdots \vee H_n)=0.
\end{array}
\]
\end{proof}
\begin{theorem}\label{THM:DECOMP}
		Given a family of $n$ conditional events $\mathscr{E}=\{E_1|H_1,\ldots, E_{n}|H_{n}\}$, let $\K$ be the set of c-constituents associated with $\mathscr{E}$. For each $1\leq k\leq n$, it holds that \begin{equation}\label{EQ:DECOMPGEN}
\begin{array}{ll}
\sum_{\{i_1,\ldots,i_h\}\subseteq\{1,\ldots,k\}}\C_{i_1\cdots i_h \no{i_{h+1}}\cdots \no{i_k}}=
\sum_{(1^*,\ldots,k^*)\in\{1,\no{1}\}\times \cdots \times \{k,\no{k}\}}\C_{1^*\cdots k^*}=1,\\ \\
\C_{i_1\cdots i_h \no{i_{h+1}}\cdots \no{i_k}}\geq 0, \;\; \forall\;\{i_1,\ldots,i_h\}\subseteq\{1,\ldots,k\}. 
\end{array}
\end{equation} 
\end{theorem}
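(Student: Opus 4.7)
The plan is to prove the identity $\sum_{(1^*,\ldots,k^*)\in\{1,\no{1}\}\times\cdots\times\{k,\no{k}\}}\C_{1^*\cdots k^*}=1$ by induction on $k$, and then handle the non-negativity assertion as an immediate consequence.

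For the base case $k=1$, one simply writes $\C_1+\C_{\no{1}}=(E_1|H_1)+(\no{E}_1|H_1)$ and applies the extended-indicator formula (\ref{EQ:AgH}) to get
\[
(E_1H_1+x_1\no{H}_1)+(\no{E}_1H_1+(1-x_1)\no{H}_1)=H_1+\no{H}_1=1.
\]

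For the inductive step, assume that $\sum_{(1^*,\ldots,k^*)}\C_{1^*\cdots k^*}=1$. Partition the sign vectors $(1^*,\ldots,(k+1)^*)$ according to whether the last coordinate equals $k+1$ or $\no{k+1}$, and group the two sums term by term with matching $(1^*,\ldots,k^*)$. By formula (\ref{EQ:CnStar}), which is the content of Theorem \ref{THM:ADDITIVEn} applied with the auxiliary conditional event $E_{n+1}|H_{n+1}$ in both its affirmed and negated forms, each paired sum collapses:
\[
\C_{1^*\cdots k^*\,(k+1)}+\C_{1^*\cdots k^*\,\no{k+1}}=\C_{1^*\cdots k^*}.
\]
Summing over the $2^k$ sign vectors $(1^*,\ldots,k^*)$ then reduces the sum over $2^{k+1}$ c-constituents to the inductive sum, which equals $1$.

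Non-negativity follows at once from the fact that every conjunction $\C_{i_1\cdots i_h\no{i_{h+1}}\cdots\no{i_k}}$ is, by Definition \ref{DEF:CONGn} (under coherence of the relevant prevision assessment), a conditional random quantity whose possible values lie in $[0,1]$; this is made explicit right after Definition \ref{DEF:CONGn} and is also consistent with Theorem \ref{THM:TEOREMAAI13} applied to each sub-assessment. Hence each term in the sum is $\geq 0$.

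The main obstacle is conceptual rather than computational: one has to recognise that the decomposition identity (\ref{EQ:CnStar}), proved as Theorem \ref{THM:ADDITIVEn}, is precisely the ingredient which pairs up c-constituents at level $k+1$ into c-constituents at level $k$. Once this pairing is in place, the induction is immediate and no case analysis on the constituents $C_h$ of the underlying partition of $\Omega$ is needed.
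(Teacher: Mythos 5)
Your proposal is correct and follows essentially the same route as the paper: induction on $k$ with base case $\C_1+\C_{\no{1}}=1$, the inductive step pairing each level-$(k+1)$ c-constituent with its sign-flipped partner via the decomposition formula (\ref{EQ:CnStar}) of Theorem \ref{THM:ADDITIVEn}, and non-negativity from coherence of the prevision assessments entering the definition of each conjunction. The only cosmetic difference is that you verify the base case by expanding the indicators while the paper writes $E_1|H_1+\no{E}_1|H_1=\Omega|H_1=1$; the substance is identical.
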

\begin{proof}
	First of all, as $P(\Omega|H_1)=1$, we observe that \[\C_1+\C_{\no{1}}=E_1|H_1+\no{E}_1|H_1=(E_1\vee \no{E}_1)|H_1=\Omega|H_1=1,\]
	that is  (\ref{EQ:DECOMPGEN}) holds when $k=1$. Moreover,  from  	 
		(\ref{EQ:ADDITIVEn=1}), $\C_{12}+
		\C_{1\no{2}}+\C_{\no{1}2}+
		\C_{\no{1}\,\no{2}}=\C_{1}+\C_{\no{1}}=1$, that is (\ref{EQ:DECOMPGEN}) holds when $k=2$. Then, by induction, assuming  (\ref{EQ:DECOMPGEN}) valid for $k-1$, from (\ref{EQ:CnStar}) it follows that 
	\[
\begin{array}{ll}	
\sum_{\{i_1,\ldots,i_h\}\subseteq\{1,\ldots,k\}}\C_{i_1\cdots i_h \no{i_{h+1}}\cdots \no{i_k}}=
\sum_{(1^*,\ldots,k^*)\in\{1,\no{1}\}\times \cdots \times \{k,\no{k}\}}\C_{1^*\cdots k^*}=\\
=\sum_{(1^*,\ldots,(k-1)^*)\in\{1,\no{1}\}\times \cdots \times \{k-1,\no{k-1}\}}(\C_{1^*\cdots (k-1)^*k}+
\C_{1^*\cdots (k-1)^*\no{k}})=\\
=\sum_{(1^*,\ldots,(k-1)^*)\in\{1,\no{1}\}\times \cdots \times \{k-1,\no{k-1}\}}\C_{1^*\cdots (k-1)^*}=1,
\end{array}
\]	
that is formula	(\ref{EQ:DECOMPGEN}) is valid for  $k$. Finally, the inequalities $\C_{i_1\cdots i_h \no{i_{h+1}}\cdots \no{i_k}}\geq 0$,  $\forall\;\{i_1,\ldots,i_h\}\subseteq\{1,\ldots,k\}$, hold because the prevision assessments used when defining conjunctions are assumed to be coherent.
\end{proof}
We observe in particular that from Definition \ref{DEF:CONGn} and Theorem \ref{THM:DECOMP}  it holds  that 
\begin{equation} \label{EQ:CONBAS}
\sum_{\{i_1,i_2,\ldots,i_h\}\subseteq\{1,2,\ldots,n\}}x_{i_1\cdots i_h \no{i_{h+1}}\cdots \no{i_n}}=1, \,\,
x_{i_1\cdots i_h \no{i_{h+1}}\cdots \no{i_n}}\geq 0, \,\,\,\, \forall\,  \{i_1,\ldots, i_h\}\subseteq  \{1,\dots,n\}.
\end{equation}
In other words the prevision of each  conditional constituent is nonnegative and the sum of all these previsions is equal to 1.
In addition, we show that 
the properties in   (\ref{EQ:PROPADD}) still  hold for conditional events.
Indeed, 
by Theorem~\ref{THM:ADDITIVEn},  it follows that
\[
\begin{array}{ll}
\C_1=\C_{12}+\C_{1\no{2}}=\C_{123}+\C_{12\no{3}}+\C_{1\no{2}3}+\C_{1\no{2}\,\no{3}}=\ldots=\\
=\C_{12\cdots n}+
\C_{12\cdots n-1 \no{n}}
+
\cdots 
+\C_{1 \no{2}\cdots \no{n-1}n}
+\C_{1 \no{2}\cdots \no{n}}
=\\
=
\sum_{(2^*,\ldots,n^*)\in\{2,\no{2}\}\times \cdots \times \{n,\no{n}\}}\C_{12^*\cdots n^*}
=\sum_{ \{1\}\subseteq \{i_1,\ldots,i_h\}\subseteq \{1,\ldots,n\}}
\C_{i_1\cdots i_h \no{i_{h+1}}\cdots \no{i_n}},
\end{array}
\]	
and in general, for each  $j\in\{1,\ldots,n\}$ it holds that
 \begin{equation}\label{EQ:DECOMPPART}
\begin{array}{ll}
\displaystyle
\C_j=\sum_{\{(1^*,\ldots, (j-1)^*,(j+1)^*,\ldots,n^*)\}}\C_{1^*\cdots (j-1)^*j(j+1)^*\cdots n^*}
\displaystyle=\sum_{ \{j\}\subseteq \{i_1,\ldots,i_h\}\subseteq \{1,\ldots,n\}}
\C_{i_1i_2\cdots i_h \no{i_{h+1}}\cdots \no{i_n}},
\end{array}
\end{equation}
where the symbol
${\{(1^*,\ldots, (j-1)^*,(j+1)^*,\ldots,n^*)\}}$ denotes the following set
\[
\{(1^*,\ldots, (j-1)^*,(j+1)^*,\ldots,n^*)\in\{1,\no{1}\}\times \cdots\times \{j-1,\no{j-1}\} \times \{j+1,\no{j+1}\}   \times \cdots \times \{n,\no{n}\}\}.
\]
Moreover, concerning the probability  $x_j$ of $E_j|H_j$, from (\ref{EQ:DECOMPPART}) it holds that
\begin{equation}\label{EQ:DECOMPPARTPROB}
\begin{array}{ll}
\displaystyle
P(\C_j)=x_j=\sum_{\{(1^*,\ldots, (j-1)^*,(j+1)^*,\ldots,n^*)\}}x_{1^*\cdots (j-1)^*j(j+1)^*\cdots n^*}
\displaystyle=\\=
\displaystyle\sum_{ \{j\}\subseteq \{i_1,\ldots,i_h\}\subseteq \{1,\ldots,n\}}
x_{i_1i_2\cdots i_h \no{i_{h+1}}\cdots \no{i_n}}=
\displaystyle\sum_{ \{j\}\subseteq \{i_1,\ldots,i_h\}\subseteq \{1,\ldots,n\}}
\prev(\mathscr{C}_{i_1i_2\cdots i_h \no{i_{h+1}}\cdots \no{i_n}}).
\end{array}
\end{equation}
More in general, for the conjunction $\C_{S}=\bigwedge_{j\in S} (E_{j}|H_{j})$ it holds that
\begin{equation}\label{}
\C_{S}=\sum_{\{i_1,\ldots,i_h\}\supseteq S }
\C_{{i_1} \cdots  i_h \no{i_{h+1}}\cdots \no{i_n}},
\;\; \forall \emptyset \neq  S\subseteq  \{1,\ldots,n\},
\end{equation}
and hence
\begin{equation}\label{EQ:XS}
\prev(\C_{S})=x_{S}=\sum_{\{i_1,\ldots,i_h\}\supseteq S }
x_{{i_1} \cdots  i_h \no{i_{h+1}}\cdots \no{i_n}},\;\; \forall \emptyset \neq  S\subseteq  \{1,\ldots,n\}.
\;
\end{equation}
Moreover,
 \begin{equation}\label{EQ:DECOMPPARTGEN}
\C_{i_1i_2\cdots i_h \no{i_{h+1}} \cdots \no{i_k}}=
\sum_{\{j_1,\ldots,j_r\}\subseteq \{i_{k+1},\ldots,i_n\}}
\C_{i_1i_2\cdots i_h j_1\cdots j_r  \no{i_{h+1}} \cdots \no{i_k} \no{j_{r+1}} \cdots \no{j_{n-k-r} }}, \,\,\forall\, 0\leq h\leq k\leq n,
\end{equation}
and for its prevision $x_{i_1i_2\cdots i_h \no{i_{h+1}} \cdots \no{i_k}}$ it holds that
\begin{equation}\label{EQ:DECOMPPARTGENPROB}
x_{i_1i_2\cdots i_h \no{i_{h+1}} \cdots \no{i_k}}=
\sum_{\{j_1,\ldots,j_r\}\subseteq \{i_{k+1},\ldots,i_n\}}
x_{i_1i_2\cdots i_h j_1\cdots j_r  \no{i_{h+1}} \cdots \no{i_k} \no{j_{r+1}} \cdots \no{j_{n-k-r} }}, \,\,\forall\, 0\leq h\leq k\leq n.
\end{equation}
\section{The inclusion-exclusion principle and the distributivity property}
\label{SEC:INEX}
In this section we show that the well known inclusion-exclusion formula, which holds for the disjunction of $n$ unconditional events (and its probability), still holds  
 for the disjunction of $n$ conditional events (and its prevision). This result, and other related formulas, will be used in Section \ref{SEC:NECSUFF}. We also prove a distributivity property by means of which we can directly derive the inclusion-exclusion formula.
We first give a preliminary result.
\begin{theorem}
Given  $n+1$ conditional events $E_1|H_1, \ldots, E_{n+1}|H_{n+1}$, it holds that
\begin{equation}\label{EQ:PREINCLU}
\begin{array}{ll}
\C_{\no{1}\cdots \no{h}\, i_1 \cdots  i_k\, n+1}=\C_{i_1 \cdots  i_k\, n+1}-\sum_{j=1}^{h}\C_{j i_1 \cdots  i_k\, n+1}+\sum_{1\leq j_1<j_2\leq h}\C_{j_1j_2 i_1 \cdots  i_k\, n+1}+\cdots+(-1)^{h} \C_{1\cdots h\, i_1 \cdots  i_k\, n+1}, \\
  1\,\leq h\leq n,\, \{i_1,\ldots,i_k\}\subseteq \{h+1,\ldots,n\}.
\end{array}
\end{equation}
\end{theorem}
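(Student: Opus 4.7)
The natural plan is induction on $h$, using the decomposition formula (\ref{EQ:CnStarbis}) as the only real ingredient. Throughout, I will regard the index set $\{i_1,\ldots,i_k\}\cup\{n+1\}$ as a fixed block that rides along untouched; the inductive structure only affects how we peel off negated indices from $\{1,\ldots,h\}$.

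For the base case $h=1$, the decomposition (\ref{EQ:CnStarbis}) applied to $\C_{i_1\cdots i_k\, n+1}$ with respect to the index $1$ gives
\begin{equation*}
\C_{i_1\cdots i_k\, n+1}=\C_{1\, i_1\cdots i_k\, n+1}+\C_{\no{1}\, i_1\cdots i_k\, n+1},
\end{equation*}
so $\C_{\no{1}\, i_1\cdots i_k\, n+1}=\C_{i_1\cdots i_k\, n+1}-\C_{1\, i_1\cdots i_k\, n+1}$, which is exactly (\ref{EQ:PREINCLU}) for $h=1$.

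For the inductive step, assume (\ref{EQ:PREINCLU}) holds for $h-1$, for every admissible choice of the subset $\{i_1,\ldots,i_k\}\subseteq\{h,h+1,\ldots,n\}$. Applying (\ref{EQ:CnStarbis}) to the index $h$, I split
\begin{equation*}
\C_{\no{1}\cdots \no{h-1}\, i_1\cdots i_k\, n+1}=\C_{\no{1}\cdots \no{h-1}\, h\, i_1\cdots i_k\, n+1}+\C_{\no{1}\cdots \no{h-1}\, \no{h}\, i_1\cdots i_k\, n+1},
\end{equation*}
which rearranges to
\begin{equation*}
\C_{\no{1}\cdots \no{h-1}\, \no{h}\, i_1\cdots i_k\, n+1}=\C_{\no{1}\cdots \no{h-1}\, i_1\cdots i_k\, n+1}-\C_{\no{1}\cdots \no{h-1}\, h\, i_1\cdots i_k\, n+1}.
\end{equation*}
Now I apply the inductive hypothesis to each of the two terms on the right: to the first with the original index block, and to the second by absorbing $h$ into the block (so that $\{h,i_1,\ldots,i_k\}$ plays the role of the subset of $\{h,\ldots,n\}$ required by the IH for $h-1$).

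The main bookkeeping step is then to combine the two expansions. The first expansion contributes the sum $\sum_{r=0}^{h-1}(-1)^r \sum_{1\le j_1<\cdots<j_r\le h-1}\C_{j_1\cdots j_r\, i_1\cdots i_k\, n+1}$, while the second contributes $-\sum_{r=0}^{h-1}(-1)^r \sum_{1\le j_1<\cdots<j_r\le h-1}\C_{j_1\cdots j_r\, h\, i_1\cdots i_k\, n+1}$. Together these list exactly the $2^h$ subsets of $\{1,\ldots,h\}$: those \emph{not} containing $h$ (from the first sum, with sign $(-1)^r$) and those \emph{containing} $h$ (from the second, with sign $(-1)^{r+1}$ after absorbing the leading minus). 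In each case the sign is $(-1)^{|J|}$ where $J\subseteq\{1,\ldots,h\}$ is the set of non-negated indices among $\{1,\ldots,h\}$. Re-grouping by $|J|$ produces exactly the alternating sum in (\ref{EQ:PREINCLU}) at level $h$.

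I do not expect a real obstacle beyond this combinatorial reorganization; the identity (\ref{EQ:CnStarbis}) fully handles the algebra of conjoined conditionals, and the rest is the standard Pascal-style argument that underlies the ordinary inclusion–exclusion identity. The only subtlety worth flagging is that, in the second application of the IH, the augmented block $\{h,i_1,\ldots,i_k\}$ must be a subset of $\{h,h+1,\ldots,n\}$, which is precisely the hypothesis $\{i_1,\ldots,i_k\}\subseteq\{h+1,\ldots,n\}$ of the theorem; so the IH is applicable without any extra constraints.
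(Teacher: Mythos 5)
Your proposal is correct and follows essentially the same route as the paper: induction on $h$ with the induction hypothesis strengthened to hold for all admissible index blocks, the decomposition formula (\ref{EQ:CnStarbis}) applied to peel off the new negated index, and the two resulting expansions (one with the original block, one with the block augmented by the newly split index) recombined by the standard sign bookkeeping over subsets of $\{1,\ldots,h\}$. The only difference is cosmetic: the paper phrases the step as passing from $h$ to $h+1$ while you pass from $h-1$ to $h$.
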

\begin{proof}
Formula (\ref{EQ:PREINCLU}) is satisfied for $h=1$  because,  by the decomposition formula (\ref{EQ:CnStarbis}), it holds that
\[
\C_{\no{1}\, i_1 \cdots  i_k\, n+1}=\C_{ i_1 \cdots  i_k\, n+1}-\C_{1 i_1 \cdots  i_k\, n+1},\,\; \forall \{i_1,\ldots, i_k\}\subseteq \{2,\ldots,n\}.
\]
By assuming that (\ref{EQ:PREINCLU}) is satisfied for $h\leq n-1$,  we prove that (\ref{EQ:PREINCLU}) is also satisfied for $h+1$.
By (\ref{EQ:CnStarbis}), it holds that
\[
\begin{array}{ll}
\C_{\no{1}\cdots \no{h}\, i_1 \cdots  i_k\, n+1}=\C_{\no{1}\cdots \no{h}h+1\, i_1 \cdots  i_k\, n+1}+\C_{\no{1}\cdots \no{h+1}\, i_1 \cdots  i_k\, n+1},\ \ \  \forall\, \{i_1,\ldots, i_k\}\subseteq \{h+2,\ldots,n\}.
\end{array}
\] Moreover, 
 by the hypothesis,  it holds that  
\[
\C_{\no{1}\cdots \no{h}\, i_1 \cdots  i_k\, n+1}=\C_{i_1 \cdots  i_k\, n+1}-\sum_{j=1}^{h}\C_{j i_1 \cdots  i_k\, n+1}+\sum_{1\leq j_1<j_2\leq h}\C_{j_1j_2 i_1 \cdots  i_k\, n+1}+\cdots+(-1)^{h} \C_{1\cdots h\, i_1 \cdots  i_k\, n+1}
\]
and
\[
\C_{\no{1}\cdots \no{h}\,h+1\, i_1 \cdots  i_k\, n+1}=\C_{h+1\,i_1 \cdots  i_k\, n+1}-\sum_{j=1}^{h}\C_{j h+1i_1 \cdots  i_k\, n+1}+\sum_{1\leq j_1<j_2\leq h}\C_{j_1j_2 h+1 i_1 \cdots  i_k\, n+1}+\cdots+(-1)^{h} \C_{1\cdots h+1 i_1 \cdots  i_k\, n+1}.
\]
Then, by (\ref{EQ:CnStarbis}), for all $\{i_1,\ldots,i_k\}\subseteq \{h+2,\ldots,n\}$ it follows that 
\[
\begin{array}{ll}
\C_{\no{1}\cdots \no{h+1}\, i_1 \cdots  i_k\, n+1}=\C_{\no{1}\cdots \no{h}\, i_1 \cdots  i_k\, n+1}-\C_{\no{1}\cdots \no{h}h+1\, i_1 \cdots  i_k\, n+1}=\\
=[\C_{i_1 \cdots  i_k\, n+1}-\sum_{j=1}^{h}\C_{j i_1 \cdots  i_k\, n+1}+\sum_{1\leq j_1<j_2\leq h}\C_{j_1j_2 i_1 \cdots  i_k\, n+1}+\cdots+(-1)^{h} \C_{1\cdots h\, i_1 \cdots  i_k\, n+1}]+\\
-[\C_{h+1\,i_1 \cdots  i_k\, n+1}-\sum_{j=1}^{h}\C_{j h+1i_1 \cdots  i_k\, n+1}+\sum_{1\leq j_1<j_2\leq h}\C_{j_1j_2 h+1 i_1 \cdots  i_k\, n+1}+\cdots+(-1)^{h} \C_{1\cdots h+1 i_1 \cdots  i_k\, n+1}]=\\
= \C_{i_1 \cdots  i_k\, n+1}-\sum_{j=1}^{h+1}\C_{j i_1 \cdots  i_k\, n+1}+\sum_{1\leq j_1<j_2\leq h+1}\C_{j_1j_2 i_1 \cdots  i_k\, n+1}+\cdots+(-1)^{h+1} \C_{1\cdots h+1\, i_1 \cdots  i_k\, n+1}.
\end{array}
\]
Then, formula (\ref{EQ:PREINCLU}) follows by iterating the previous reasoning from $h=0$ to $h=n-1$.
\end{proof}
We give below some examples where  formula (\ref{EQ:PREINCLU}) is obtained.
\begin{equation*}
\begin{array}{l}
\mathscr{C}_{\overline{1}2}=\mathscr{C}_{2}-\mathscr{C}_{12},\;
\mathscr{C}_{\overline{1}23}=\mathscr{C}_{23}-\mathscr{C}_{123},\;
\mathscr{C}_{\overline{1}\,\overline{2}3}=\mathscr{C}_{\overline{1}3}-
\mathscr{C}_{\overline{1}23}=\mathscr{C}_{3}-\mathscr{C}_{13}-
\mathscr{C}_{23}+\mathscr{C}_{123},\; \\ \mathscr{C}_{\overline{1}\,
	\overline{2}34}=\mathscr{C}_{\overline{1}34}-\mathscr{C}_{
	\overline{1}234}=\mathscr{C}_{34}-\mathscr{C}_{134}-\mathscr{C}_{234}+
\mathscr{C}_{1234},
\\
\mathscr{C}_{\overline{1}\,\overline{2}\,\overline{3}4}=\mathscr{C}_{
	\overline{1}\,\overline{2}4}-\mathscr{C}_{\overline{1}\,\overline{2}34}=
\mathscr{C}_{\overline{1}4}-\mathscr{C}_{\overline{1}24}-
\mathscr{C}_{\overline{1}34}+\mathscr{C}_{\overline{1}234}=
\mathscr{C}_{4}-\mathscr{C}_{14}-\mathscr{C}_{24}+\mathscr{C}_{124}-
\mathscr{C}_{34}+\mathscr{C}_{134}+\mathscr{C}_{234}-\mathscr{C}_{1234}.
\end{array}
\end{equation*}
In the next result  we obtain the inclusion-exclusion formula for the disjunction of $n$ conditional events.
\begin{theorem}\label{THM:INCLEXCLNEW}
Given $n$ conditional events $E_1|H_1, \ldots, E_n|H_n$, it holds that
\[
\D_{1\cdots n}=\sum_{h=1}^{n}(-1)^{h+1} \sum_{1\leq i_1<\cdots <i_h\leq n}\C_{i_1\cdots i_h}=\sum_{i=1}^n\C_{i}-\sum_{1\leq i_1<i_2\leq n}\C_{i_1i_2}+\cdots+(-1)^{n+1} \C_{1\cdots n}.
\]
\end{theorem}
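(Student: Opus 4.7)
My plan is to reduce the disjunction to a negated conjunction of negations via De Morgan's laws, and then expand that conjunction using the decomposition and expansion results already established. The starting point is the identity $\D_{1\cdots n}=1-\C_{\no{1}\cdots\no{n}}$, which is the De Morgan law recorded in (\ref{EQ:DEMORGAN}). The theorem then reduces to the auxiliary claim
\[
\C_{\no{1}\cdots\no{n}} \;=\; \sum_{J\subseteq\{1,\ldots,n\}}(-1)^{|J|}\C_J,
\]
where we use the convention $\C_\emptyset=1$ and $\C_J=\bigwedge_{i\in J}(E_i|H_i)$ for $J\neq\emptyset$.

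I would prove this auxiliary claim by induction on $n$. The base case $n=1$ is $\C_{\no{1}}=1-\C_1$, which is a special case of Theorem~\ref{THM:DECOMP} (indeed $\C_1+\C_{\no{1}}=1$). For the inductive step, I would apply the decomposition formula (\ref{EQ:CnStarbis}), with an empty set of positive indices and $n$ playing the role of the new index, obtaining
\[
\C_{\no{1}\cdots\no{n-1}} \;=\; \C_{\no{1}\cdots\no{n-1}n} + \C_{\no{1}\cdots\no{n}},
\]
so that $\C_{\no{1}\cdots\no{n}}=\C_{\no{1}\cdots\no{n-1}}-\C_{\no{1}\cdots\no{n-1}n}$. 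The first term is handled by the inductive hypothesis on $n-1$, and the second is precisely the case of formula (\ref{EQ:PREINCLU}) with $h=n-1$, no intermediate positive indices, and the trailing index equal to $n$; this yields $\C_{\no{1}\cdots\no{n-1}n}=\sum_{J\subseteq\{1,\ldots,n-1\}}(-1)^{|J|}\C_{J\cup\{n\}}$. Subtracting and reorganizing the sum by splitting subsets of $\{1,\ldots,n\}$ according to whether they contain $n$ gives the claimed identity for $n$.

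Substituting back into $\D_{1\cdots n}=1-\C_{\no{1}\cdots\no{n}}$, the constant $1$ cancels the $J=\emptyset$ term of the sum, and after a sign flip the remaining terms group by cardinality to yield
\[
\D_{1\cdots n}\;=\;\sum_{h=1}^{n}(-1)^{h+1}\sum_{1\leq i_1<\cdots <i_h\leq n}\C_{i_1\cdots i_h},
\]
which is exactly the desired formula. The main obstacle is not conceptual but combinatorial bookkeeping: one must confirm that formula (\ref{EQ:PREINCLU}) applies in the degenerate shape needed here (with no middle positive indices) and that the signs line up when reindexing subsets of $\{1,\ldots,n-1\}$ versus subsets of $\{1,\ldots,n\}$ in the inductive step. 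Once these are verified, the rest is a purely algebraic manipulation of alternating sums.
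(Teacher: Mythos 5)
Your proposal is correct and follows essentially the same route as the paper: both reduce $\D_{1\cdots n}$ to $1-\C_{\no{1}\cdots \no{n}}$ via De Morgan and then expand $\C_{\no{1}\cdots \no{n}}$ by means of formula (\ref{EQ:PREINCLU}), arriving at the identity (\ref{EQ:INCLESCLCONG}). The only (harmless) difference is that the paper obtains that expansion in a single step by applying (\ref{EQ:PREINCLU}) with $h=n$ and $\C_{n+1}=1$ (using $\C_{\no{1}\cdots \no{n}}\wedge 1=\C_{\no{1}\cdots \no{n}}$ from (\ref{EQ:CONGPART})), whereas you re-derive it by an additional induction on $n$ combining the decomposition formula with the degenerate case $k=0$ of (\ref{EQ:PREINCLU}), which is a legitimate instance of that formula.
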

\begin{proof} 
From  (\ref{EQ:CONGPART}), it holds  that $\C_{\no{1}\cdots \no{n}}\wedge 1=
\C_{\no{1}\cdots \no{n}}.$
Then, by applying (\ref{EQ:PREINCLU}) with $h=n$ and $\C_{n+1}=1$,
it follows that
\begin{equation}\label{EQ:INCLESCLCONG}
\C_{\no{1}\cdots \no{n}}=
1-\sum_{i=1}^{n}\C_{i}+\sum_{1\leq i_1<i_2\leq n}\C_{i_1i_2}+\cdots+(-1)^{n} \C_{1\cdots n}.
\end{equation}
Finally, by recalling (\ref{EQ:DEMORGAN}), we obtain
\[
\D_{1\cdots n}=1-\C_{\no{1} \cdots \no{n}}=\sum_{i=1}^n\C_{i}-\sum_{1\leq i_1<i_2\leq n}\C_{i_1i_2}+\cdots+(-1)^{n+1} \C_{1\cdots n}.
\]
\end{proof}
In the next result we prove the validity of a  suitable distributivity property.
\begin{theorem}\label{THM:EQDISTRIB}
	Let   $\C_1,\ldots,\C_{n+1}$ be  $n+1$ conditional events.  Then, the following distributivity property is satisfied:
\begin{equation}\label{EQ:DISTRIBGENERAL}
	\begin{array}{ll}
	\displaystyle
	[1-\sum_{i=1}^{h}\C_{i}+\sum_{1\leq i_1<i_2\leq h}\C_{i_1i_2}+\cdots+(-1)^{h} \C_{1\cdots h}]\wedge \C_{ i_1 \cdots  i_k\, n+1}= \\ 
	\displaystyle
	= 1\wedge \C_{ i_1 \cdots  i_k\, n+1}-\sum_{i=1}^{h}\C_{i}\wedge \C_{ i_1 \cdots  i_k\, n+1}+\sum_{1\leq i_1<i_2\leq h}\C_{i_1i_2}\wedge \C_{ i_1 \cdots  i_k\, n+1}+\cdots+(-1)^{h} \C_{1\cdots h}\wedge \C_{ i_1 \cdots  i_k\, n+1},\\
  1\,\leq h\leq n,\, \{i_1,\ldots,i_k\}\subseteq \{h+1,\ldots,n\}.
	\end{array}
\end{equation}
\end{theorem}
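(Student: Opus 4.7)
The plan is to show that both sides of (\ref{EQ:DISTRIBGENERAL}) coincide by recognising each as a different encoding of the single conjunction $\C_{\no{1}\cdots\no{h}\,i_1\cdots i_k\,n+1}$, invoking only Remark~\ref{REM:CONGCONG}, the inclusion-exclusion identity (\ref{EQ:INCLESCLCONG}), and the chained form (\ref{EQ:PREINCLU}).

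First, I would rewrite the bracket on the left-hand side. The expression in brackets is precisely the right-hand side of (\ref{EQ:INCLESCLCONG}) applied to the subfamily $\{E_1|H_1,\ldots,E_h|H_h\}$ (equivalently, the specialisation of (\ref{EQ:PREINCLU}) to $k=0$ with $\C_{n+1}=1$, using $\C\wedge 1=\C$ from (\ref{EQ:CONGPART})). Hence, as a conditional random quantity, the bracket equals $\C_{\no{1}\cdots\no{h}}$. Substituting, the LHS becomes $\C_{\no{1}\cdots\no{h}}\wedge \C_{i_1\cdots i_k\,n+1}$. Since $\{1,\ldots,h\}$ and $\{i_1,\ldots,i_k,n+1\}$ are disjoint (because $\{i_1,\ldots,i_k\}\subseteq\{h+1,\ldots,n\}$), the associativity/commutativity of conjunction recalled in Remark~\ref{REM:CONGCONG}, applied with $\mathscr{E}'=\{\no{E}_1|H_1,\ldots,\no{E}_h|H_h\}$ and $\mathscr{E}''=\{E_{i_1}|H_{i_1},\ldots,E_{i_k}|H_{i_k},E_{n+1}|H_{n+1}\}$, yields
\[
\text{LHS of (\ref{EQ:DISTRIBGENERAL})}\;=\;\C_{\no{1}\cdots\no{h}\,i_1\cdots i_k\,n+1}.
\]

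For the right-hand side, I would apply Remark~\ref{REM:CONGCONG} term by term: for every $\{j_1,\ldots,j_s\}\subseteq\{1,\ldots,h\}$ one has $\C_{j_1\cdots j_s}\wedge \C_{i_1\cdots i_k\,n+1}=\C_{j_1\cdots j_s\,i_1\cdots i_k\,n+1}$, and $1\wedge \C_{i_1\cdots i_k\,n+1}=\C_{i_1\cdots i_k\,n+1}$ by (\ref{EQ:CONGPART}). This converts the RHS into the alternating sum
\[
\C_{i_1\cdots i_k\,n+1}\;-\;\sum_{j=1}^{h}\C_{j\,i_1\cdots i_k\,n+1}\;+\;\sum_{1\leq j_1<j_2\leq h}\C_{j_1 j_2\,i_1\cdots i_k\,n+1}\;+\;\cdots\;+\;(-1)^{h}\,\C_{1\cdots h\,i_1\cdots i_k\,n+1},
\]
which is exactly the right-hand side of (\ref{EQ:PREINCLU}) and therefore also equals $\C_{\no{1}\cdots\no{h}\,i_1\cdots i_k\,n+1}$. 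Since both sides of (\ref{EQ:DISTRIBGENERAL}) reduce to the same conjunction, the equality is proved.

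The only delicate point I foresee is interpretive: the operation $\wedge$ has been defined in the paper between conjunctions of conditional events, not between an arbitrary conditional random quantity (such as the bracketed alternating sum) and such a conjunction. The step that replaces the bracket by $\C_{\no{1}\cdots\no{h}}$ should therefore be read as an equality of CRQs (granted by (\ref{EQ:INCLESCLCONG})), after which the conjunction on the outside is the standard one of Remark~\ref{REM:CONGCONG}. Once this identification is made, no genuine combinatorial or probabilistic obstacle remains: the distributivity is essentially a direct corollary of (\ref{EQ:PREINCLU}) combined with the associativity of conjunction.
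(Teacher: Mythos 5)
Your proposal is correct and follows essentially the same route as the paper's proof: identify the bracket with $\C_{\no{1}\cdots\no{h}}$ via (\ref{EQ:INCLESCLCONG}), merge the conjunctions using Remark~\ref{REM:CONGCONG}, and expand via (\ref{EQ:PREINCLU}). The only cosmetic difference is that you reduce both sides separately to $\C_{\no{1}\cdots\no{h}\,i_1\cdots i_k\,n+1}$ whereas the paper writes a single chain of equalities from left to right.
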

\begin{proof}  By  recalling Remark \ref{REM:CONGCONG}, formulas (\ref{EQ:PREINCLU}),  and  (\ref{EQ:INCLESCLCONG}), it holds that  
\[ 
\begin{array}{ll}
\displaystyle
[1-\sum_{i=1}^{h}\C_{i}+\sum_{1\leq i_1<i_2\leq h}\C_{i_1i_2}+\cdots+(-1)^{h} \C_{1\cdots h}]\wedge \C_{ i_1 \cdots  i_k\, n+1}= 
\C_{\no{1}\cdots \no{h}} \wedge \C_{i_1 \cdots  i_k\, n+1} = \C_{\no{1}\cdots \no{h}i_1 \cdots  i_k\, n+1} = \\ 
\displaystyle=\C_{i_1 \cdots  i_k\, n+1}-\sum_{j=1}^{h}\C_{j i_1 \cdots  i_k\, n+1}+\sum_{1\leq j_1<j_2\leq h}\C_{j_1j_2 i_1 \cdots  i_k\, n+1}+\cdots+(-1)^{h} \C_{1\cdots h\, i_1 \cdots  i_k\, n+1}= \\
\displaystyle=1\wedge \C_{ i_1 \cdots  i_k\, n+1}-\sum_{i=1}^{h}\C_{i}\wedge \C_{ i_1 \cdots  i_k\, n+1}+\sum_{1\leq i_1<i_2\leq h}\C_{i_1i_2}\wedge \C_{ i_1 \cdots  i_k\, n+1}+\cdots+(-1)^{h} \C_{1\cdots h}\wedge \C_{ i_1 \cdots  i_k\, n+1}.
\end{array} 
\]
\end{proof}	
The next result shows a further aspect of the distributivity property.
\begin{theorem}\label{THM:DISTRIBEXPLBIS}
Let   $\C_1,\ldots,\C_{n+1}$ be  $n+1$ conditional events.  Then,
\begin{equation}\label{EQ:DISTRIB1-Cn+1} 
\begin{array}{ll}
[1-\sum_{i=1}^{n}\C_{i}+\sum_{1\leq i_1<i_2\leq n}\C_{i_1i_2}+\cdots+(-1)^{n} \C_{1\cdots n}]\wedge (1-\C_{n+1})= \\
=[1-\sum_{i=1}^{n}\C_{i}+\sum_{1\leq i_1<i_2\leq n}\C_{i_1i_2}+\cdots+(-1)^{n} \C_{1\cdots n}]\wedge 1+\\
-[1-\sum_{i=1}^{n}\C_{i}+\sum_{1\leq i_1<i_2\leq n}\C_{i_1i_2}+\cdots+(-1)^{n} \C_{1\cdots n}]\wedge \C_{n+1}.\\
\end{array}
\end{equation}
\end{theorem}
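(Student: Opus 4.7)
My plan is to reduce the statement to identities already proved earlier in the paper, rather than expanding the multi-term conjunction distributively term by term.

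First I would recognize the bracketed polynomial on both sides. By formula (\ref{EQ:INCLESCLCONG}), applied with $n$ in place of the running index, the expression
\[
1-\sum_{i=1}^{n}\C_{i}+\sum_{1\leq i_1<i_2\leq n}\C_{i_1i_2}+\cdots+(-1)^{n}\C_{1\cdots n}
\]
is exactly the conjunction $\C_{\no{1}\cdots \no{n}}$ of the $n$ negated conditionals. Thus the identity to be proved collapses to
\[
\C_{\no{1}\cdots \no{n}} \wedge (1 - \C_{n+1}) \;=\; \C_{\no{1}\cdots \no{n}} \wedge 1 \;-\; \C_{\no{1}\cdots \no{n}} \wedge \C_{n+1}.
\]

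Next I would rewrite $1 - \C_{n+1}$. Since $\C_{n+1} = E_{n+1}|H_{n+1}$, from the definition of negation (specialized to $n=1$) one has $1 - \C_{n+1} = \no{E}_{n+1}|H_{n+1} = \C_{\no{n+1}}$. Hence $\C_{\no{1}\cdots \no{n}} \wedge (1 - \C_{n+1}) = \C_{\no{1}\cdots \no{n}} \wedge \C_{\no{n+1}} = \C_{\no{1}\cdots \no{n}\,\no{n+1}}$ by the associativity of conjunction recalled in Remark \ref{REM:CONGCONG}. On the right-hand side, using (\ref{EQ:CONGPART}) gives $\C_{\no{1}\cdots \no{n}} \wedge 1 = \C_{\no{1}\cdots \no{n}}$, while $\C_{\no{1}\cdots \no{n}} \wedge \C_{n+1} = \C_{\no{1}\cdots \no{n}\, n+1}$ again by associativity. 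The target identity therefore becomes
\[
\C_{\no{1}\cdots \no{n}\,\no{n+1}} \;=\; \C_{\no{1}\cdots \no{n}} \;-\; \C_{\no{1}\cdots \no{n}\, n+1}.
\]

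Finally I would invoke the decomposition formula for conjunctions. Formula (\ref{EQ:CnStar}), which extends Theorem \ref{THM:ADDITIVEn} to the starred notation, applied to the tuple $(1^*,\ldots,n^*)=(\no{1},\ldots,\no{n})$, yields
\[
\C_{\no{1}\cdots \no{n}} \;=\; \C_{\no{1}\cdots \no{n}\, n+1} \;+\; \C_{\no{1}\cdots \no{n}\,\no{n+1}},
\]
from which the required identity follows by rearranging. I expect no real obstacle: the whole argument is a careful bookkeeping that the ``minus'' on the right-hand side of (\ref{EQ:DISTRIB1-Cn+1}) is precisely the ``plus'' appearing in the decomposition formula, after noting that $1-\C_{n+1}=\C_{\no{n+1}}$. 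The only subtle point worth emphasising is that one must use the already established fact $\C_{\no{1}\cdots\no{n}}\wedge 1=\C_{\no{1}\cdots\no{n}}$ so that the identity is read between genuine conditional random quantities (not just formal polynomials), since the conditioning event implicit in $\C_{\no{1}\cdots\no{n}}\wedge\C_{n+1}$ enlarges to $H_1\vee\cdots\vee H_{n+1}$.
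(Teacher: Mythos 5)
Your argument is correct, and it closes the proof by a shorter route than the paper's. The two proofs share the same opening moves: the bracketed polynomial is identified with $\C_{\no{1}\cdots \no{n}}$ via (\ref{EQ:INCLESCLCONG}), $1-\C_{n+1}$ is read as $\C_{\no{n+1}}$, and the left-hand side becomes $\C_{\no{1}\cdots \no{n}\,\no{n+1}}$. Where you diverge is in the final step: you invoke the decomposition formula (\ref{EQ:CnStar}) once, with $(1^*,\ldots,n^*)=(\no{1},\ldots,\no{n})$, to get $\C_{\no{1}\cdots \no{n}\,\no{n+1}}=\C_{\no{1}\cdots \no{n}}-\C_{\no{1}\cdots \no{n}\,n+1}$, and then translate the two terms back into $[\cdots]\wedge 1$ and $[\cdots]\wedge \C_{n+1}$. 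The paper instead expands $\C_{\no{1}\cdots \no{n+1}}$ again by inclusion-exclusion, splits the signed sum into the terms free of the index $n+1$ and those containing it, and re-collapses each block using the two instances (\ref{EQ:DISTRIBCn+1=1}) and (\ref{EQ:DISTRIBCn+1}) of Theorem \ref{THM:EQDISTRIB}. The routes are logically equivalent, since the inclusion-exclusion formula and the distributivity property are themselves consequences of the decomposition formula; yours is more economical and makes transparent that the theorem is the decomposition identity in disguise, while the paper's version has the merit of exhibiting the right-hand side explicitly as a term-by-term distribution of $\wedge$ over the signed sum, which is the ``distributivity'' reading the statement is meant to convey. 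Your closing caveat about the conditioning event enlarging to $H_1\vee\cdots\vee H_{n+1}$ is well placed and is exactly what Remark \ref{REM:CONGCONG} guarantees.
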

\begin{proof}
We observe that, by  Theorem \ref{THM:EQDISTRIB},
when $h=n$ it holds that
\begin{equation}\label{EQ:DISTRIBCn+1}
	\begin{array}{ll}
[1-\sum_{i=1}^{n}\C_{i}+\sum_{1\leq i_1<i_2\leq n}\C_{i_1i_2}+\cdots+(-1)^{n} \C_{1\cdots n}]\wedge \C_{n+1}= \\ 
	= 1\wedge\C_{n+1}-\sum_{i=1}^{n}\C_{i}\wedge \C_{n+1}+\sum_{1\leq i_1<i_2\leq n}\C_{i_1i_2}\wedge \C_{n+1}+\cdots+(-1)^{n} \C_{1\cdots h}\wedge \C_{n+1}.
	\end{array}
\end{equation}
In particular, if $\C_{n+1}=1$	it follows that
\begin{equation}\label{EQ:DISTRIBCn+1=1}
\begin{array}{ll}
	[1-\sum_{i=1}^{n}\C_{i}+\sum_{1\leq i_1<i_2\leq n}\C_{i_1i_2}+\cdots+(-1)^{n} \C_{1\cdots n}]\wedge 1= \\ 
	= 1\wedge1-\sum_{i=1}^{n}\C_{i}\wedge 1+\sum_{1\leq i_1<i_2\leq n}\C_{i_1i_2}\wedge 1+\cdots+(-1)^{n} \C_{1\cdots h}\wedge 1.
\end{array}
\end{equation}
Based on (\ref{EQ:INCLESCLCONG}), (\ref{EQ:DISTRIBCn+1}), and (\ref{EQ:DISTRIBCn+1=1}) it follows that 
\[ 
\begin{array}{ll}
[1-\sum_{i=1}^{n}\C_{i}+\sum_{1\leq i_1<i_2\leq n}\C_{i_1i_2}+\cdots+(-1)^{n} \C_{1\cdots n}]\wedge (1-\C_{n+1})= 
\C_{\no{1}\cdots \no{n}} \wedge \C_{\no{n+1}} = \\ =\C_{\no{1}\cdots \no{n+1}}=1-\sum_{i=1}^{n+1}\C_{i}+\sum_{1\leq i_1<i_2\leq n+1}\C_{i_1i_2}+\cdots+(-1)^{n+1} \C_{1\cdots n+1}= \\
=[1-\sum_{i=1}^{n}\C_{i}+\sum_{1\leq i_1<i_2\leq n}\C_{i_1i_2}+\cdots+(-1)^{n} \C_{1\cdots n}] + \\ - [\C_{n+1}-\sum_{i=1}^{n}\C_{i\,n+1}+\sum_{1\leq i_1<i_2\leq n}\C_{i_1i_2 n+1}+\cdots+(-1)^{n+1} \C_{1\cdots n+1}] = \\
=[1\wedge 1-\sum_{i=1}^{n}\C_{i}\wedge 1+\sum_{1\leq i_1<i_2\leq n}\C_{i_1i_2}\wedge 1+\cdots+(-1)^{n} \C_{1\cdots n}\wedge 1] + \\ - [1 \wedge \C_{n+1}-\sum_{i=1}^{n}\C_{i}\wedge \C_{n+1}+\sum_{1\leq i_1<i_2\leq n}\C_{i_1i_2}\wedge \C_{n+1}+\cdots+(-1)^{n+1} \C_{1\cdots n}\wedge \C_{n+1}]=\\
=[1-\sum_{i=1}^{n}\C_{i}+\sum_{1\leq i_1<i_2\leq n}\C_{i_1i_2}+\cdots+(-1)^{n} \C_{1\cdots n}]\wedge 1+\\
-[1-\sum_{i=1}^{n}\C_{i}+\sum_{1\leq i_1<i_2\leq n}\C_{i_1i_2}+\cdots+(-1)^{n} \C_{1\cdots n}]\wedge \C_{n+1}.
\end{array} 
\]
\end{proof}
We remark that, by the relation $\D_{1\cdots n}=1-\C_{\no{1} \cdots \no{n}}$, the inclusion-exclusion formula also follows by directly computing $\C_{\no{1} \cdots \no{n}}$  by means of the  distributivity property, as shown below.
\begin{equation}\label{EQ:DISTRIBPROPBIS}
\begin{array}{ll}
\C_{\no{1}\cdots \no{n}}=
\bigwedge_{i=1}^n\C_{\no{i}}=
\bigwedge_{i=1}^n(1-\C_i)=
 (1-\C_1-\C_2+\C_{12})\wedge (1-\C_3)\wedge\cdots \wedge  (1-\C_n)=\\
 =
  (1-\C_1-\C_2-\C_3+\C_{12}+\C_{13}+\C_{23}-\C_{123})\wedge (1-\C_4)\wedge\cdots \wedge  (1-\C_n)=
 \\
= \cdots = 1-\sum_{i=1}^{n}\C_{i}+\sum_{1\leq i_1<i_2\leq n}\C _{i_1i_2}+\cdots+(-1)^{n} \C_{1\cdots n}.
\end{array}
\end{equation}
Then, 
for each  nonempty subset $  \{i_1,\ldots,i_h\}\subseteq \{1,\ldots,n\}$, 
by taking into account (\ref{EQ:DISTRIBGENERAL}) it holds that
\begin{equation}\label{EQ:DISTRIBGEN}
\begin{array}{ll}
\C_{i_1\cdots i_h\no{i_{h+1}}\cdots \no{i_{n}}}=
 \C_{\no{i_{h+1}}\cdots \no{i_{n}}}\wedge\C_{i_1\cdots i_h}=\\
=[1-\sum_{j\in\{i_{h+1},\ldots,i_n\}}\C_{j}+\sum_{\{j_1,j_2\}\subseteq \{i_{h+1},\ldots,i_n\}}\C_{j_1j_2}+\cdots+(-1)^{n-h} \C_{i_{h+1}\cdots i_n}]\wedge \C_{i_1\cdots i_h}=
\\
=\C_{i_1\cdots i_h}-\sum_{j\in\{i_{h+1},\ldots,i_n\}}\C_{i_1\cdots i_hj}+\sum_{\{j_1,j_2\}\subseteq \{i_{h+1},\ldots,i_n\}}\C_{i_1\cdots i_hj_1j_2}+\cdots+(-1)^{n-h} \C_{i_1\cdots i_n}.
\end{array}
\end{equation}
When ${\{i_1,\ldots,i_h\}=\emptyset}$, that is $h=0$, formula 
(\ref{EQ:DISTRIBGEN}) continues to hold because, 
if we set by  convention that  $\C_{\{i_1,\ldots,i_h\}}=\C_{\emptyset}=1$, it reduces to  formula (\ref{EQ:INCLESCLCONG}). Then, in general, it holds that
\begin{equation}\label{EQ:DISTRIBGENBIS}
\C_{i_1\cdots i_h\no{i_{h+1}}\cdots \no{i_{n}}}=\sum_{k=0}^{n-h} (-1)^{k}\sum_{\{j_1,\ldots,j_k\}\subseteq \{i_{h+1},\ldots,i_n\}}\C_{i_1\cdots i_hj_1\cdots j_k},  \;\;  \{i_1,\ldots, i_h\}\subseteq\{1,\ldots,n\}.
\end{equation} 
\begin{remark}
 We observe that, concerning the probabilistic aspects, by recalling (\ref{EQ:LOGPROB}) from  coherence it holds that
\begin{equation}\label{EQ:DISTRIBGENPREV}
 \begin{array}{ll}
 x_{i_1\cdots i_h\no{i_{h+1}}\cdots \no{i_{n}}}=
 x_{i_1\cdots i_h}-\sum_{j\in\{i_{h+1},\ldots,i_n\}}x_{i_1\cdots i_hj}+\sum_{\{j_1,j_2\}\subseteq \{i_{h+1},\ldots,i_n\}}x_{i_1\cdots i_hj_1j_2}+\cdots+(-1)^{n-h} x_{i_1\cdots i_n}=\\
 =\sum_{k=0}^{n-h} (-1)^{k}\sum_{\{j_1,\ldots,j_k\}\subseteq \{i_{h+1},\ldots,i_n\}}x_{i_1\cdots i_hj_1\cdots j_k},
 \end{array}
 \end{equation}
where by convention we set $  x_{i_1\cdots i_h}=x_{\emptyset}=1$ when ${\{i_1,\ldots,i_h\}=\emptyset}$.
Moreover, as each conditional constituent $\C_{i_1\cdots i_h\no{i_{h+1}}\cdots \no{i_{n}}}$ is a nonnegative conditional random quantity, by coherence it must be 
\begin{equation}\label{EQ:CONDNEC}
x_{i_1\cdots i_h\no{i_{h+1}}\cdots \no{i_{n}}}=\sum_{k=0}^{n-h} (-1)^{k}\sum_{\{j_1,\ldots,j_k\}\subseteq \{i_{h+1},\ldots,i_n\}}x_{i_1\cdots i_hj_1\cdots j_k}\geq 0, \;\; \{i_1,\ldots,i_h\}\subseteq\{1,\ldots,n\},
\end{equation}
with  
\begin{equation}\label{EQ:SUM1}
\sum_{\{i_1,\ldots,i_h\}\subseteq\{1,\ldots,n\}} x_{i_1\cdots i_h\no{i_{h+1}}\cdots \no{i_{n}}}=
\sum_{\{i_1,\ldots,i_h\}\subseteq\{1,\ldots,n\}} \sum_{k=0}^{n-h} (-1)^{k}\sum_{\{j_1,\ldots,j_k\}\subseteq \{i_{h+1},\ldots,i_n\}}x_{i_1\cdots i_hj_1\cdots j_k}=1, \;\; 
\end{equation}
as it also follows by observing that 
$\sum_{\{i_1,\ldots,i_h\}\subseteq\{1,\ldots,n\}} \C_{i_1\cdots i_h\no{i_{h+1}}\cdots \no{i_{n}}}=1$. 

Notice that, given a coherent prevision assessment 
$(x_{i_1\cdots i_h}; \emptyset \neq \{i_1,\ldots, i_h\} \subseteq \{1,\ldots, n\})$ on the family $\{\C_{i_1\cdots i_h}; \emptyset \neq \{i_1,\ldots, i_h\} \subseteq \{1,\ldots, n\} \}$, where $\C_{i_1\cdots i_h}=(E_{i_1}|H_{i_1}) \wedge \cdots \wedge (E_{i_h}|H_{i_h})$, as shown by formula (\ref{EQ:CONDNEC}) for every nonempty subset $\{i_1,\ldots, i_h\}$ there exists a unique coherent extension $x_{i_1\cdots i_h\no{i_{h+1}}\cdots \no{i_{n}}}$ for the prevision of the conditional constituent $\C_{i_1\cdots i_h\no{i_{h+1}}\cdots \no{i_{n}}}$.  
\end{remark}
\section{Necessary and sufficient conditions for coherence}
\label{SEC:NECSUFF}
In this section we obtain, under logical independence, two necessary and sufficient coherence conditions. Let  a family of  $n$ conditional events $\mathscr{E}=\{E_1|H_1, \ldots, E_n|H_n\}$ be given, with $E_1,\ldots,E_n, H_1,\ldots,H_n$ logically independent. We denote by  $\M=(x_{ S}:\emptyset \neq S \subseteq\{1,\ldots,n\})$    a prevision assessment on $\mathscr{F}=\{\C_{ S}:\emptyset \neq S \subseteq\{1,\ldots,n\}\}$,
where $\mathscr{C}_{ S}=\bigwedge_{i\in S} (E_i|H_i)
$
and  $x_{ S}=\prev(\C_{ S})$. 
We observe that  $\mathscr{F}$ is the family of all $2^{n}-1$ possible conjunctions among the conditional events in $\mathscr{E}$.
 The first  condition characterizes the coherence of  $\M$ and will be represented in geometrical terms by a suitable convex hull.
The second condition characterizes the coherence of a prevision assessment on  $\mathscr{F}\cup \mathscr{K}$, where $\mathscr{K}$ is the set of conditional constituents associated with   $\mathscr{E}$.

We denote by $C_0,C_1,\ldots C_{3^n-1}$, the  constituents  associated with the family  $\mathscr{E}$,  that is the elements of the partition of $\Omega$ obtained by expanding the expression
\[
\bigwedge_{i=1}^n (E_iH_i \vee \no{E}_iH_i \vee \no{H}_i),
\]
where 
$C_{0}=\no{H}_1\cdots \no{H}_n$.
With each $C_h$ we associate a point 
\begin{equation}\label{EQ:QHGEN}
Q_h=(q_{h S}:\emptyset \neq S \subseteq\{1,\ldots,n\}),
\end{equation}
 where $q_{h S}$ is the value of $\C_{ S}$
when $C_h$ is true. In particular with $C_0$ it is associated  $Q_0=\M$.
We notice that $Q_h$ is the value of the random vector $(\C_{ S}:\emptyset \neq S \subseteq\{1,\ldots,n\})$ when $C_h$ is true.
By discarding $Q_0$, we denote by $\Q$ the set of remaining points $Q_{h}$'s associated with the pair $(\mathscr{F},\M)$ and by $\I_{\Q}$ the convex hull of the set $\Q$. We denote by $\B$ the subset of  $\Q$, constituted by $2^n$ binary points  $Q_1,\ldots, Q_{2^n}$,
defined as
\begin{equation}\label{EQ:SETB}
\B=\{Q_1,\ldots, Q_{2^n}\}=\{Q_h\in \Q: q_{h S}\in\{0,1\},\; S=\{i\},\; i=1,\ldots,n\}.
\end{equation}
We observe that the points $Q_1,\ldots, Q_{2^n}$ are associated with the $2^n$ constituents $C_h$'s obtained by  expanding the expression
\[
\bigwedge_{i=1}^n (E_iH_i \vee \no{E}_iH_i),
\]
which coincides with $\bigwedge_{i=1}^n H_i$.
Notice that, 
given any $C_h$ such that  $Q_h\in \B$, the sub-vector  $(q_{hS},S=\{i\},\; i=1,\ldots,n)$ is  a vertex of the unit hypercube $[0,1]^n$ and it is the value assumed by the random vector
 $(E_1|H_1,\cdots , E_n|H_n)$ when  $C_h$ is true. We also remark that, from the definition of conjunction it follows that 
\begin{equation}\label{EQ:PROPB}
Q_h\in \B \Longrightarrow\;\; q_{h S}\in\{0,1\},\;\;\forall\,\,\emptyset \neq S     \subseteq\{1,\ldots,n\}.
\end{equation}
Then, the set $\B$ can be equivalently defined as
\[
\B=\{Q_h\in \Q: q_{h S}\in\{0,1\},\;\emptyset \neq S     \subseteq\{1,\ldots,n\}\}.
\]
We denote by $\I_{\B}$ the convex hull of the set $\B$; of course $\I_{\B} \subseteq \I_{\Q}$. Then we have
\begin{theorem}\label{THM:CNESIB}
Given a family of $n$ conditional events $\mathscr{E}=\{E_1|H_1,\ldots,E_n|H_n\}$, 
let $\M=(x_{ S}:\emptyset \neq S \subseteq\{1,\ldots,n\})$ be  a prevision assessment on the family
	 $\mathscr{F}=\{\C_{ S}:\emptyset \neq S \subseteq\{1,\ldots,n\}\}$, where $\mathscr{C}_{ S}=\bigwedge_{i\in S} (E_i|H_i)$.
	Under the assumption of logical independence of $E_1,\ldots,E_n, H_1,\ldots,H_n$, the prevision assessment $\M$ on $\mathscr{F}$ is coherent if and only if $\mathcal{M}$ belongs to the convex hull $\mathcal{I}_{\mathcal{B}}$ of the $2^n$ binary points  $Q_1,\ldots, Q_{2^n}$.
\end{theorem}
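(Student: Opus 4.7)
The plan is to establish the two implications via the operative characterization of coherence (Theorem~\ref{CNES-PREV-I_0-INT}) together with the conditional-constituent decomposition of Section~\ref{SEC:CONDCONST}. I would index the $2^n$ binary points in $\B$ by subsets $I_k\subseteq\{1,\ldots,n\}$, so that $Q_k$ corresponds to the constituent $(\bigwedge_{i\in I_k}E_i H_i)\wedge(\bigwedge_{i\notin I_k}\no{E}_i H_i)$; under logical independence all such constituents are non-empty, and by definition of the points $Q_h$ the $S$-th coordinate of $Q_k$ equals $1$ if $S\subseteq I_k$ and $0$ otherwise.

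For the necessity direction (coherence of $\M$ on $\mathscr{F}$ implies $\M\in\I_\B$), I would use the unique coherent extension of $\M$ to the previsions $x^*_{I_k}=x_{i_1\cdots i_h\no{i_{h+1}}\cdots\no{i_n}}$ of the conditional constituents, given by the Möbius-type formula (\ref{EQ:CONDNEC}). Coherence forces each $x^*_{I_k}\geq 0$ because $\C_{i_1\cdots i_h\no{i_{h+1}}\cdots\no{i_n}}$ is a nonnegative conditional random quantity, and (\ref{EQ:SUM1}) gives $\sum_k x^*_{I_k}=1$. Setting $\lambda_k=x^*_{I_k}$ and applying identity (\ref{EQ:XS}),
\[
\sum_k\lambda_k\, q_{kS}\;=\;\sum_{I_k\supseteq S} x^*_{I_k}\;=\;x_S,\qquad\forall\,\emptyset\neq S\subseteq\{1,\ldots,n\},
\]
so $\M=\sum_k\lambda_k Q_k\in\I_\B$.

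For the sufficiency direction ($\M\in\I_\B$ implies coherence), I would start from a convex decomposition $\M=\sum_k\lambda_k Q_k$ with $\lambda_k\geq 0$ and $\sum_k\lambda_k=1$, and extend it by assigning weight $0$ to all non-binary $Q_h$, obtaining a solution of the system $(\Sigma)$ associated with $(\mathscr{F},\M)$. To conclude via Theorem~\ref{CNES-PREV-I_0-INT} it suffices to show $I_0=\emptyset$: every binary constituent $C_k$ lies in $\bigwedge_{i=1}^n H_i$ and therefore in $\bigvee_{i\in S}H_i$ for every nonempty $S\subseteq\{1,\ldots,n\}$, hence $\sum_{C_h\subseteq\bigvee_{i\in S}H_i}\lambda_h\geq\sum_k\lambda_k=1>0$ and no $S$ belongs to $I_0$; condition~$(ii)$ of the theorem is then vacuous.

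The main obstacle will be justifying the nonnegativity of the weights $\lambda_k=x^*_{I_k}$ in the necessity direction, because coherence is assumed only on $\mathscr{F}$ rather than on $\mathscr{F}\cup\K$. This is resolved by appealing to the fact that (\ref{EQ:CONDNEC}) produces the \emph{unique} coherent extension of $\M$ to $\K$, so that the prevision of the nonnegative random quantity $\C_{i_1\cdots i_h\no{i_{h+1}}\cdots\no{i_n}}$ is automatically nonnegative by coherence. The logical-independence hypothesis is used precisely to ensure that $\B$ contains exactly $2^n$ distinct points indexed by $I_k\subseteq\{1,\ldots,n\}$, so that the argument covers all coordinates of $\M$.
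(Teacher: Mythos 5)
Your proposal is correct and follows essentially the same route as the paper's proof: necessity via the unique nonnegative extension to the conditional constituents given by (\ref{EQ:CONDNEC}), (\ref{EQ:SUM1}) and (\ref{EQ:XS}), which yields the convex weights; sufficiency via $\B\subset\Q$, solvability of $(\Sigma)$, and $I_0=\emptyset$ from the binary constituents lying in $\bigwedge_{i=1}^n H_i$, concluding by Theorem~\ref{CNES-PREV-I_0-INT}. No substantive differences from the paper's argument.
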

\begin{proof}
$(\Rightarrow)$ Assume that $\M$ is coherent. 
Then, all the inequalities in (\ref{EQ:CONDNEC}) are satisfied. 
We observe that the condition  $\M\in\I_{\B}$ is satisfied if there exist suitable nonnegative coefficients $\lambda_h$'s, with $\sum_{h=1}^{2^n}\lambda_h=1$, such that 
$\M=\sum_{h=1}^{2^n}\lambda_hQ_{h}$. This means that for each component $x_{ S}$ of $\M$ it must be
$x_{ S}=\sum_{h=1}^{2^n}\lambda_hq_{h S}=\sum_{h:q_{h S}=1}\lambda_h$. We observe that with each $Q_h\in \B$ it is associated a unique subset $\{i_1,\ldots,i_k\} \subseteq \{1,\ldots,n\}$ such that, when $S=\{i\}$, $i=1,\ldots,n$, it holds that $q_{hS}=q_{h\{i\}}=1$ if $i \in \{i_1,\ldots,i_k\}$ and $q_{hS}=q_{h\{i\}}=0$ if $i \in \{i_{k+1},\cdots, i_{n}\}=\{1,\ldots,n\} \setminus \{i_1,\ldots, i_k\}$. Then, by changing notations,  the point $Q_h$ associated with $\{i_1,\ldots,i_k\}$  will be denoted by the symbol $Q_{i_1\cdots i_k\no{i_{k+1}}\cdots \no{i_{n}}}$ and the coefficient $\lambda_h$ will be denoted by $\lambda_{i_1\cdots i_k\no{i_{k+1}}\cdots \no{i_{n}}}$.
By this change of notations, the binary quantity $q_{hS}$ becomes $q_{i_1\cdots i_k \no{i_{k+1}} \cdots \no{i_n}S}$, with 
\[
q_{i_1\cdots i_k \no{i_{k+1}} \cdots \no{i_n}S}=
\left\{
\begin{array}{ll}
1, & \mbox{ if }  S\subseteq \{i_1,\ldots,i_k\},\\
0, & \mbox{ if }  S\nsubseteq \{i_1,\ldots,i_k\}.\\
\end{array}
\right.
\]
Then the equality $x_{ S}=\sum_{h=1}^{2^n}\lambda_hq_{h S}$ becomes $x_{ S}
=\sum_{\{i_1,\ldots, i_k\}\supseteq  S}
\lambda_{i_1\cdots i_k \no{i_{k+1}} \cdots \no{i_n}}$.
Then, more  explicitly, the condition $\M\in\I_{\B}$ is satisfied if there exists a vector, with  components,  $\Lambda=(\lambda_{i_1\cdots i_k \no{i_{k+1}} \cdots \no{i_n}}; \{i_1,\ldots,i_k\} \subseteq \{1,\ldots,n\})$ which is a solution of the system  below.
\begin{equation}{\label{EQ:SYSTEM}}
(\Sigma_{\B})\left\{
\begin{array}{ll}
\begin{array}{ll}
x_{ S}
=\sum_{\{i_1,\ldots, i_k\}\supseteq  S}
\lambda_{i_1\cdots i_k \no{i_{k+1}} \cdots \no{i_n}},\;\; \;  \emptyset \neq S\subseteq \{1,2\ldots,n\},
\end{array}\\
\sum_{\{i_1,\ldots, i_k\} \subseteq \{1,2\ldots,n\}}\lambda_{i_1\cdots i_k \no{i_{k+1}} \cdots \no{i_n}}=1,\\
\lambda_{i_1\cdots i_k \no{i_{k+1}} \cdots \no{i_n}}\geq 0, \; \; \; \{i_1,\ldots, i_k\} \subseteq \{1,2\ldots,n\}.
\end{array}
\right.
\end{equation}
We observe that $\Lambda$ has $2^n$ (nonnegative) components and  $(\Sigma_{\B})$ has $2^n$ equations. 
By coherence of $\M$, from (\ref{EQ:CONDNEC}) and (\ref{EQ:SUM1}) we can compute the quantities 
$x_{i_1\cdots i_k \no{i_{k+1}} \cdots \no{i_n}}$, for all $\{i_1,\ldots,i_k\} \subseteq \{1,\ldots,n\}$, which are nonnegative and with their sum equal to 1. 
Moreover, by  (\ref{EQ:XS}), for each subset $S$ it holds that  $x_{ S}
=\sum_{\{i_1,\ldots, i_h\}\supseteq  S}
x_{i_1\cdots i_h \no{i_{h+1}} \cdots \no{i_n}}$, which has the same structure of 
 the equation  $x_{ S}=\sum_{\{i_1,\ldots, i_h\}\supseteq  S}
\lambda_{i_1\cdots i_h \no{i_{h+1}} \cdots \no{i_n}}$ in $(\Sigma_{\B})$. Then, 
$(\Sigma_{\B})$ is solvable  and the  (unique) solution is 
 the  vector $\Lambda$ with components 
\[
\lambda_{i_1\cdots i_k \no{i_{k+1}} \cdots \no{i_n}}=x_{i_1\cdots i_k \no{i_{k+1}} \cdots \no{i_n}}=\prev(\C_{i_1\cdots i_k \no{i_{k+1}} \cdots \no{i_n}}), \;\, \forall \{i_1,\ldots,i_k\} \subseteq \{1,\ldots,n\}.
\]
Thus, the condition $\M \in \I_{\B}$ is satisfied.  \\
$(\Leftarrow)$ Assume that  $\M \in \I_{\B}$. Then, $\M \in \I_{\Q}$ because $\B \subset \Q$ and hence  the  system $(\Sigma)$  is solvable. Moreover, $\I_0=\emptyset$ because all the coefficients $\lambda_{i_1\cdots i_k\no{i_{k+1}} \cdots \no{i_n}}$'s are associated with the constituents, which we denote by $C_{i_1\cdots i_k\no{i_{k+1}} \cdots \no{i_n}}$'s, such that for every subset $\{i_1,\ldots,i_k\}$ it holds that $C_{i_1\cdots i_k\no{i_{k+1}} \cdots \no{i_n}} \subseteq H_i$, for every $i=1,\ldots,n$. Thus, by Theorem   \ref{CNES-PREV-I_0-INT} the prevision assessment $\M$ is coherent. 
\end{proof}
\begin{remark}\label{REM:CNESIB}
As shown by Theorem \ref{THM:CNESIB}, under logical independence of the basic events $E_1,\ldots E_n, H_1,\ldots, H_n$, the coherence of $\M$ amounts to the solvability of  system $(\Sigma_{\B})$.  Moreover, 
 $(\Sigma_{\B})$ is solvable if and only if the following  inequalities  are satisfied
\begin{equation}\label{EQ:LISTINEQ}
\sum_{k=0}^{n-h} (-1)^{k}\sum_{\{j_1,\ldots,j_k\}\subseteq \{i_{h+1},\ldots,i_n\}}x_{i_1\cdots i_hj_1\cdots j_k}\geq 0, \;\;\forall\;\; \{i_1,\ldots,i_h\}\subseteq\{1,\ldots,n\},
\end{equation}
where we recall that,  by  (\ref{EQ:CONDNEC}), the first member of (\ref{EQ:LISTINEQ}) is the prevision $x_{i_1\cdots i_h\no{i_{h+1}}\cdots \no{i_{n}}}$ of  $\C_{i_1\cdots i_h\no{i_{h+1}}\cdots \no{i_{n}}}$.
Therefore,  under logical independence, 
the set of all coherent assessments on the  family $\mathscr{F}$ is  the set of
assessments $\M$, with  components $x_S$ which satisfy the  list of linear inequalities (\ref{EQ:LISTINEQ}). Indeed, $x_{i_1\cdots i_hj_1\cdots j_k}$ coincides with the component $x_S$, where $S=\{i_1,\ldots,i_h,j_1,\ldots,  j_k\}$.  
\end{remark}
We will now give another result on coherence under logical independence.
We denote by $\Delta$   the $(2^n-1)$-dimensional simplex of $\mathbb{R}^{2^n}$, that is the set  of vectors $\mathscr{V}=(v_{\{i_1,\ldots,i_h\}}, \{i_1,\ldots,i_h\}\subseteq\{1,\ldots,n\})$ such that 
\begin{equation*}
\sum_{\{i_1,\ldots,i_h\}\subseteq\{1,2,\ldots,n\}}v_{\{i_1,\ldots,i_h\}}=1, \,\,
v_{\{i_1,\ldots,i_h\}}\geq 0, \,\,\,\, \forall\,  \{i_1,\ldots, i_h\}\subseteq  \{1,\dots,n\}.
\end{equation*}
We observe that, given any 
 $\mathscr{V}\in \Delta$, we can construct a prevision assessment 
 $\M=(x_{ S}:\emptyset \neq S \subseteq\{1,\ldots,n\})$ on
$\mathscr{F}=\{\C_{ S}:\emptyset \neq S \subseteq\{1,\ldots,n\}\}$, where each $x_{S}$ is obtained by applying (\ref{EQ:XS})  with $x_{i_1i_2\cdots i_h \no{i_{h+1}} \cdots \no{i_k}}$ replaced by
$v_{\{i_1,\ldots, i_h\}}$.  
Moreover, concerning the set 
 $\mathscr{K}$ of the  conditional constituents  $\{\C_{i_1\cdots i_h \no{i_{h+1}} \cdots \no{i_n}}:\;\; \{i_1,\cdots, i_h\}\subseteq \{1,\ldots,n\}\}$,  each $\C_{i_1\cdots i_h \no{i_{h+1}} \cdots \no{i_n}}$ is obtained from suitable elements of $\mathscr{F}$ by applying 
(\ref{EQ:DISTRIBGENBIS}).  In this way,  we also obtain a prevision assessment $\mathscr{P}=(x_{i_1\cdots i_h \no{i_{h+1}} \cdots \no{i_n}}, \{i_1,\ldots,i_h\}\subseteq\{1,\ldots,n\})$  on $\mathscr{K}$, with $x_{i_1\cdots i_h \no{i_{h+1}} \cdots \no{i_n}}=v_{\{i_1,\ldots,i_h\}}$; thus $\mathscr{P}=\mathscr{V}$.
The next result shows that, under logical independence, by the simplex $\Delta$  we obtain all the coherent prevision assessments on $\mathscr{F}\cup \K$. For the sake of simplicity, even if both $\M$ and $\mathscr{P}$ contain the element $x_{1\cdots n}=\prev(\C_{1\cdots n})$, we denote by $(\M, \mathscr{P})=(\M, \mathscr{V})$ the prevision assessment on $\mathscr{F}\cup \mathscr{K}$ associated with $\mathscr{V}$.
\begin{theorem}\label{THM:SIMPLESSO} Let a 
 family of $n$ conditional events $\mathscr{E}=\{E_1|H_1,\ldots,E_n|H_n\}$ be given, with $E_1,\ldots,E_n, H_1,\ldots,H_n$ logically independent. A prevision assessment  $(\M,\mathscr{P})$ on $\mathscr{F}\cup \mathscr{K}$ is coherent if and only if it is associated to a vector $\mathscr{V}\in \Delta$.
\end{theorem}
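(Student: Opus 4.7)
The plan is to argue the two directions separately, relying on the constituent-based machinery already developed in Sections~\ref{SEC:CONDCONST} and \ref{SEC:NECSUFF}.

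For the ``only if'' direction, I would start from a coherent assessment $(\M,\mathscr{P})$ and just take $\mathscr{V}:=\mathscr{P}$. By formulas~(\ref{EQ:CONDNEC}) and (\ref{EQ:SUM1}), coherence of the sub-assessment $\M$ on $\mathscr{F}$ already forces every component $x_{i_1\cdots i_h\no{i_{h+1}}\cdots\no{i_n}}$ of $\mathscr{P}$ to be nonnegative and their total to equal $1$, so $\mathscr{V}\in\Delta$. The ``linking'' identity $x_S=\sum_{\{i_1,\ldots,i_h\}\supseteq S}x_{i_1\cdots i_h\no{i_{h+1}}\cdots\no{i_n}}$ given in (\ref{EQ:XS}) is exactly the recipe by which $\M$ is reconstructed from $\mathscr{V}$ in the statement, and (\ref{EQ:DISTRIBGENBIS}) gives the recipe for recovering $\mathscr{P}$ from $\mathscr{V}$; hence $(\M,\mathscr{P})$ is indeed the assessment associated to $\mathscr{V}$.

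For the ``if'' direction I would fix $\mathscr{V}\in\Delta$ and build the corresponding $(\M,\mathscr{P})$ via the same two recipes, and then verify coherence through the operative characterization of Theorem~\ref{CNES-PREV-I_0-INT} applied to the enlarged family $\mathscr{F}\cup\mathscr{K}$. Under logical independence, among the $3^n$ constituents generated by the basic events there are exactly $2^n$ ``binary'' ones $C_J=(\bigwedge_{i\in J}E_iH_i)\wedge(\bigwedge_{i\notin J}\no{E}_iH_i)\subseteq\bigwedge_{i=1}^n H_i$, indexed by $J\subseteq\{1,\ldots,n\}$. For each such $C_J$ the value of $\C_S$ (for any $\emptyset\neq S\subseteq\{1,\ldots,n\}$) is the indicator of $S\subseteq J$, and the value of $\C_{i_1\cdots i_h\no{i_{h+1}}\cdots\no{i_n}}$ is the indicator of $\{i_1,\ldots,i_h\}=J$. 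I would then propose the candidate solution of the associated system $(\Sigma)$ given by $\lambda_J:=v_J$ on every binary constituent $C_J$ and $\lambda_h:=0$ on every constituent outside $\bigwedge_i H_i$. Substituting into the equations, the $\mathscr{F}$-rows collapse to $x_S=\sum_{J\supseteq S}v_J$ and the $\mathscr{K}$-rows collapse to $x_{i_1\cdots i_h\no{i_{h+1}}\cdots\no{i_n}}=v_{\{i_1,\ldots,i_h\}}$, which are the very definitions of $(\M,\mathscr{P})$; the normalization $\sum_J v_J=1$ holds because $\mathscr{V}\in\Delta$; and all coefficients are nonnegative. Hence $(\Sigma)$ is solvable.

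To conclude with Theorem~\ref{CNES-PREV-I_0-INT}, I need $I_0=\emptyset$. But every binary $C_J$ is contained in $H_i$ for every $i=1,\ldots,n$, so for every $i$, $\sum_{C_h\subseteq H_i}\lambda_h\ge\sum_J\lambda_J=\sum_J v_J=1>0$; thus $I_0=\emptyset$ and $(\M,\mathscr{P})$ is coherent. I expect the main obstacle to be purely bookkeeping: carefully enumerating the values of every $\C_S$ and every $\C_{i_1\cdots i_h\no{i_{h+1}}\cdots\no{i_n}}$ on the binary constituents and on the non-binary ones (where some $\no{H}_i$ is true and the conditional random quantities take their prevision values), and checking that the non-binary constituents do not spoil the system once they are given weight zero. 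This reduces to a clean ``extended'' version of the proof of Theorem~\ref{THM:CNESIB}, with the only new rows of $(\Sigma)$ being those indexed by conditional constituents, which are handled by (\ref{EQ:DISTRIBGENBIS}).
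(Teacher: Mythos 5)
Your proposal is correct and follows essentially the same route as the paper: the forward direction is identical (take $\mathscr{V}=\mathscr{P}$ and use nonnegativity plus normalization of the previsions of the c-constituents), and for the converse you use the same key idea of weighting the $2^n$ binary constituents by the components of $\mathscr{V}$ and giving zero weight to the rest. The only cosmetic difference is that the paper concludes coherence of the full assessment by invoking Theorem~\ref{THM:CNESIB} for $\M$ and then the uniqueness of the coherent extension to $\mathscr{K}$ via (\ref{EQ:XS}) and (\ref{EQ:CONDNEC}), whereas you verify the enlarged system on $\mathscr{F}\cup\mathscr{K}$ directly and check $I_0=\emptyset$; both are valid.
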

\begin{proof}
	If $(\M,\mathscr{P})$ is a coherent prevision assessment on $\mathscr{F}\cup \mathscr{K}$, then $\M$ is obtained from $\mathscr{P}$ by means of (\ref{EQ:XS}) and from (\ref{EQ:CONBAS}) it holds that $\mathscr{P}\in\Delta$. Then
		 $(\M,\mathscr{P})$ is associated with the vector 
	$\mathscr{V}=\mathscr{P}\in\Delta$. 
	
	Conversely, if $(\M,\mathscr{P})$ is associated to some $\mathscr{V}\in \Delta$, then $\mathscr{P}=\mathscr{V}$. Moreover, as shown in the proof of Theorem \ref{THM:CNESIB}, under logical independence, by setting  	
	\[
	\lambda_{i_1\cdots i_h \no{i_{h+1}} \cdots \no{i_n}}=x_{i_1\cdots i_h \no{i_{h+1}} \cdots \no{i_n}}=v_{\{i_1,\ldots, i_h\}},\;\; \forall \{i_1,\ldots,i_h\} \subseteq \{1,\ldots,n\},
	\]
	the system $(\Sigma_{\B})$ is solvable, that is $\M\in \I_{\B}$; thus $\M$ is coherent. Finally, if  we  extend the assessment $\M$, defined on $\mathscr{F}$, to the family $\K$, by recalling (\ref{EQ:XS}) and  (\ref{EQ:CONDNEC}) the extension coincides with  $\mathscr{P}$. Hence, $(\M,{\mathscr{P}})$ is coherent.
\end{proof}
\section{Some further  aspects}
\label{EQ:FURTHASP}
In this section we examine some further aspects which are related with Theorem \ref{THM:CNESIB}. 
We observe that  each  $Q_h$ defined as in (\ref{EQ:QHGEN}) is itself a prevision assessment  on $\mathscr{F}$ and hence, by Theorem \ref{THM:CNESIB}, $Q_h$ is coherent if and only if $Q_h\in \I_{\B}$. 
In the next result we prove that, under logical independence, 
coherence of $\M$ requires coherence of 
all the points $Q_h$'s. In other words, if  $\M$ is coherent, then  for each $h$ it holds that  $Q_h\in \I_{\B}$, even if $Q_h\notin \B$. 
\begin{theorem}
	Let  $n$ conditional events $E_1|H_1,\ldots, E_{n}|H_{n}$ be given, with $E_1,H_1,\ldots,E_{n},H_{n}$ logically independent. 
	Given a coherent prevision assessment $\M=(x_{ S}:\emptyset \neq S \subseteq\{1,\ldots,n\})$ on the family
	 $\mathscr{F}=\{\C_{ S}:\emptyset \neq S \subseteq\{1,\ldots,n\}\}$, for every point $Q_h$ it holds that  $Q_h$ is a coherent assessment on $\mathscr{F}$, or equivalently $Q_h\in \I_{\B}$.
\end{theorem}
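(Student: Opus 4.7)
The strategy is to invoke Theorem \ref{THM:CNESIB} and show that every $Q_h$ lies in the convex hull $\I_{\B}$ of the binary points. Given a non-$C_0$ constituent $C_h$, let $T_h$ denote the set of indices $i\in\{1,\ldots,n\}$ such that $\no{H}_i$ is true in $C_h$ (necessarily a proper subset, since $h\neq 0$), and for $i\notin T_h$ let $\epsilon_i\in\{0,1\}$ record whether $E_iH_i$ or $\no{E}_iH_i$ is true in $C_h$. From Definition \ref{DEF:CONGn}, the $S$-component $q_{hS}$ of $Q_h$ equals $0$ if $\epsilon_i=0$ for some $i\in S\setminus T_h$, equals $1$ if $S\cap T_h=\emptyset$ and $\epsilon_j=1$ for all $j\in S$, and equals $x_{S\cap T_h}$ if $S\cap T_h\neq\emptyset$ and $\epsilon_j=1$ for all $j\in S\setminus T_h$.

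If $T_h=\emptyset$, then by property (\ref{EQ:PROPB}) the point $Q_h$ already lies in $\B$, and we are done. Assume $T_h\neq\emptyset$ and set $T=T_h$. The key step is to restrict: the sub-assessment $\M_T=(x_{S'}:\emptyset\neq S'\subseteq T)$ is a coherent prevision assessment on the sub-family $\{\C_{S'}:\emptyset\neq S'\subseteq T\}$, by the standard inheritance of coherence to sub-families (which follows from Definition \ref{COER-RQ}, or from the ``only if'' direction of Theorem \ref{SYSTEM-SOLV}). Applying Theorem \ref{THM:CNESIB} to this sub-family of size $|T|$, I obtain nonnegative weights $\{\mu_\alpha\}_{\alpha\in\{0,1\}^T}$ with $\sum_\alpha \mu_\alpha=1$ such that $x_{S'}=\sum_\alpha \mu_\alpha \prod_{j\in S'}\alpha_j$ for every nonempty $S'\subseteq T$.

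Next, each such $\alpha$ is lifted to a full binary point $W_\alpha\in\B$ by completing the assignment outside $T$ via $\beta_i=\epsilon_i$ for $i\notin T$; thus the $S$-component of $W_\alpha$ equals $\prod_{j\in S\cap T}\alpha_j\cdot \prod_{j\in S\setminus T}\epsilon_j$. The claim to verify is $Q_h=\sum_\alpha \mu_\alpha W_\alpha$, which I would check component-wise: in the first case each $W_\alpha$ has $S$-component $0$ (some $\epsilon_j=0$ in $S\setminus T$ kills the product); in the second case each $W_\alpha$ has $S$-component $1$; in the third case the $S$-component of $W_\alpha$ equals $\prod_{j\in S\cap T}\alpha_j$, and the weighted sum reproduces exactly $x_{S\cap T}=q_{hS}$ by the decomposition of $\M_T$ above. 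This yields $Q_h\in\I_{\B}$, whence coherence of $Q_h$ by Theorem \ref{THM:CNESIB}.

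The main obstacle is identifying the correct lifting: the nontrivial components of $Q_h$ only involve subsets of $T_h$, so one should not try to decompose $Q_h$ directly using the $x_S$-components for arbitrary $S$, nor attempt an ad hoc rounding. The right move is to shrink to the sub-family indexed by $T_h$, where the components of $Q_h$ are genuine previsions, apply Theorem \ref{THM:CNESIB} there to obtain a convex decomposition of $\M_T$ in terms of binary vertices, and then tensor that decomposition with the deterministic $\epsilon$-values on $\{1,\ldots,n\}\setminus T_h$ to reconstruct $Q_h$ as a convex combination of binary points of the full family.
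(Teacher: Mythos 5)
Your proposal is correct, and the verification $Q_h=\sum_\alpha\mu_\alpha W_\alpha$ goes through exactly as you sketch: your three cases for $q_{hS}$ match the paper's explicit description of the components of $Q_h$, and they are exhaustive. However, after the common first step (restricting to the sub-family indexed by the ``void'' indices $T_h$, where the components of $Q_h$ are genuine previsions, and invoking inheritance of coherence by sub-assessments), your argument diverges from the paper's. The paper extends the coherent sub-assessment $\M_{T_h}$ by the point values $P(E_i|H_i)=b_i\in\{0,1\}$ for $i\notin T_h$ (coherent by logical independence), and then shows via the Fr\'echet--Hoeffding bounds of Theorems \ref{THM:RISPREL} and \ref{THM:TEOREMAAI13} that this extends \emph{uniquely} to all of $\mathscr{F}$, the unique extension being precisely $Q_h$; coherence of $Q_h$ follows, and membership in $\I_{\B}$ is then deduced from Theorem \ref{THM:CNESIB}. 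You instead apply Theorem \ref{THM:CNESIB} twice: once to the sub-family to extract a convex decomposition of $\M_{T_h}$ over the binary points of the smaller hypercube, and once at the end to convert $Q_h\in\I_{\B}$ back into coherence; the middle step is an explicit tensoring of that decomposition with the deterministic $\epsilon$-values, which exhibits the barycentric coordinates of $Q_h$ in $\I_{\B}$ concretely (weight $\mu_\alpha$ on each lifted vertex $W_\alpha$, zero elsewhere). Your route avoids the Fr\'echet--Hoeffding uniqueness argument entirely and is more constructive; the paper's route yields the slightly stronger structural by-product that $Q_h$ is the \emph{unique} coherent extension of $\M^*_{T_h}$ to $\mathscr{F}$. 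One point worth making explicit in your write-up: logical independence is what guarantees that every lifted point $W_\alpha$ actually occurs among the $2^n$ binary points of $\B$, so the lifting lands inside the correct convex hull.
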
	
\begin{proof}
	Of course, for each $Q_h\in \B$  it holds that $Q_h\in \I_{\B}$, that is $Q_h$ is coherent. 
	Let us consider any point $Q_h$, $h\neq 0$, associated with a constituent $C_h$, such that $Q_h\notin \B$.
	Without loss of generality we assume that
	\[
	C_h\subset \no{H}_1\cdots \no{H}_k {H}_{k+1}\cdots {H}_{n},\;\; 1\leq k<n.
	\]	
	Then,  for the components $q_{h S}$ of $Q_h$, with $ S=\{i\}$, $i=1,\ldots,n$, it holds that 
	\[
	q_{h S}=
	\left\{
	\begin{array}{ll}
	x_{i}, \mbox{ if }  S=\{i\}, i=1,\ldots,k;\\
	b_{i}\in\{0,1\}, \mbox{ if }  S=\{i\}, i=k+1,\ldots,n.
	\end{array}
	\right.
	\]	
	More in general  we have
	\begin{equation}\label{EQ:q_h-A}
	q_{h S}=\left\{\begin{array}{ll}
	x_{ S}, &\mbox{ if }  S\subseteq \{1,\ldots,k\},\\
	1, &\mbox{ if }  S\subseteq \{k+1,\ldots,n\} \mbox{ and } b_i=1, \forall \, i\in  S ,\\
	0, &\mbox{ if }  S\subseteq \{k+1,\ldots,n\} \mbox{ and } b_i=0, \mbox{ for some } \,i\in  S ,
	\\
	x_{S'} &\mbox{ if }  S\cap \{1,\ldots,k\}= S'\neq \emptyset \mbox{ and } b_i=1, \forall \, i\in 
	 S\setminus  S'\neq \emptyset ,\\
	0 &\mbox{ if }  S\cap \{1,\ldots,k\}= S'\neq \emptyset \mbox{ and } b_i=0,  \mbox{for some } \, i\in 
	 S\setminus  S'\neq \emptyset .\\
	\end{array}
	\right.
	\end{equation}
		We denote by $\M_k$ the sub-assessment of $\M$ defined as
	\[
	\M_k=(x_{ S}:\emptyset \neq S \subseteq\{1,\ldots,k\})
	\]
	on the sub-family $\mathscr{F}_k$ of $\mathscr{F}$ defined as
	\[
	\mathscr{F}_k=(\C_{ S}:\emptyset \neq S \subseteq\{1,\ldots,k\}).
	\]
	The coherence of $\M$ implies the coherence of the sub-assessment  $\M_k$. 
	We observe that, as the events $E_i,H_i$, $i=1,\ldots,n$ are logically independent,  the extension $\M^*_k$ of $\M_k$ on $\mathscr{F}_{k}\cup\{E_i|H_i, i=k+1,\ldots,n\}$, such that $P(E_i|H_i)=b_i\in\{0,1\}$ for $i=k+1,\ldots,n$, is coherent.
	Moreover, there exists
	a unique  extension  $\M^*$ of $\M^*_k$ on the family $\mathscr{F}$ because, for the assessment 
	\[
	\M^*=(x^*_{ S}:\emptyset \neq S \subseteq\{1,\ldots,n\}),
	\]
	each component $x^*_{ S}$ is uniquely determined by $\M_k^*$. Indeed, it holds that
	\begin{equation}\label{EQ:x*S-A}
	x^*_{ S}=\left\{\begin{array}{ll}
	x_{ S}, &\mbox{ if }  S\subseteq \{1,\ldots,k\},\\
	1, &\mbox{ if }  S\subseteq \{k+1,\ldots,n\} \mbox{ and } b_i=1, \forall \, i\in  S ,\\
	0, &\mbox{ if }  S\subseteq \{k+1,\ldots,n\} \mbox{ and } b_i=0, \mbox{ for some } \,i\in  S ,
	\\
	x_{S'} &\mbox{ if }  S\cap \{1,\ldots,k\}= S'\neq \emptyset \mbox{ and } b_i=1, \forall \, i\in 
	S''= S\setminus  S'\neq \emptyset ,\\
	0 &\mbox{ if }  S\cap \{1,\ldots,k\}= S'\neq \emptyset \mbox{ and } b_i=0,  \mbox{for some } \, i\in 
	S''= S\setminus  S'\neq \emptyset .\\
	\end{array}
	\right.
	\end{equation}
	The uniqueness of the extension $x^*_{ S}=0$, or
	$x^*_{ S}=1$ shown in the second and third lines of (\ref{EQ:x*S-A}),  follows from Theorem \ref{THM:TEOREMAAI13}. Moreover, the uniqueness of the extension $x^*_{ S}=x_{ S'}$ follows because, by Theorem \ref{THM:RISPREL}, it holds that 
	\begin{equation}\label{EQ:ANDCONG}
	\max \{x_{ S'}+x_{ S''}-1,0\} \leq x^*_{ S} \leq \min\{x_{ S'},x_{ S''}\},
	\end{equation}
and, from Theorem \ref{THM:TEOREMAAI13}, it holds that $x_{ S''}=\prev(\C_{ S\setminus  S'})=1$; thus (\ref{EQ:ANDCONG}) becomes 
\[	
\max \{x_{ S'}+x_{ S''}-1,0\}=x_{S'} \leq x^*_{ S} \leq x_{S'}= \min\{x_{ S'},x_{ S''}\}.
\]
Finally,  the uniqueness of the extension $x^*_{ S}=0$ in the last line of (\ref{EQ:x*S-A}) follows because,  from Theorem \ref{THM:TEOREMAAI13}, it holds that $x_{ S''}=\prev(\C_{ S\setminus  S'})=0$; thus (\ref{EQ:ANDCONG}) becomes 
\[	
\max \{x_{ S'}+x_{ S''}-1,0\}=0 \leq x^*_{ S} \leq 0= \min\{x_{ S'},x_{ S''}\}.
\]	Of course, as the extension  $\M^*$ of $\M^*_k$ is unique, coherence of $\M^*_k$ implies coherence of $\M^*$. Then by Theorem \ref{THM:CNESIB}, it holds that $\M^*\in \I_{\B}$.
	Finally, from (\ref{EQ:q_h-A}) and (\ref{EQ:x*S-A})  it follows that  $x^*_{ S}=q_{h S}$ $\forall S \neq \emptyset$. Therefore $\M^*=Q_h$, so that $Q_h$ is coherent, or equivalently $Q_h \in \I_{\B}$.
\end{proof}	
\begin{remark}
	We recall that each $Q_h$ associated with the pair $(\mathscr{F},\M)$ represents the value of the random vector 
	$(C_{ S}:\emptyset \neq S \subseteq\{1,\ldots,n\})$ when $C_h$ is true. Then coherence of $\M$ implies that, as for the case of unconditional events, each possible value $Q_h$ of the random vector is itself a particular coherent assessment on $\mathscr{F}$.
\end{remark}
\section{Some examples and counterexamples}\label{SEC:EX}
As shown by Theorem \ref{THM:CNESIB}, under logical independence of $E_1,\ldots,E_n, H_1,\ldots,H_n$, coherence of $\M$ amounts to condition $\M \in \I_{\B}$, that is to validity of all inequalities in formula (\ref{EQ:CONDNEC}). We examine this aspect for $n=2$ and $n=3$ in the examples below.
\begin{example}\label{EX:CNESIBn=2}
	In this example we obtain the lower and upper bounds given in  Theorem \ref{THM:FRECHET} by using the conditional constituents.
We consider $\mathscr{E}=\{E_1|H_1,E_2|H_2\}$ and $\mathscr{F}=\{E_1|H_1, E_2|H_2, (E_1|H_1) \wedge (E_2|H_2)\}$, with $E_1,E_2,H_1,H_2$ logically independent. Then, 	
let $\M=(x_1,x_2,x_{12})$ be a prevision assessment on $\mathscr{F}$. The set of conditional constituents is $\mathscr{K}=\{
\C_{12}, \C_{1\no{2}}, \C_{\no{1}2}, \C_{\no{1}\,\no{2}}\}$, where
$\C_{12}=(E_1|H_1) \wedge (E_2|H_2),\;\;
\C_{1\no{2}}=(E_1|H_1) \wedge (\no{E}_2|H_2),\;\;
\C_{\no{1}2}=(\no{E}_1|H_1) \wedge (E_2|H_2),\;\;
\C_{\no{1}\,\no{2}}=(\no{E}_1|H_1) \wedge (\no{E}_2|H_2).
$
As made in the proof of Theorem \ref{THM:CNESIB}, we change notations for the points  
 $Q_h$'s of the set 
 $\B$. In this example $n=2$, then
 $\B=\{
 Q_{12}, Q_{1\no{2}}, Q_{\no{1}2}, Q_{\no{1}\,\no{2}}\}$, where
\[
Q_{12}=(1,1,1),\;  Q_{1\no{2}}=(1,0,0),\; Q_{\no{1}2}=(0,1,0),\; Q_{\no{1}\,\no{2}}=(0,0,0).
\]
The previsions of the conditional constituents $\C_{12}, \C_{1\no{2}}, \C_{\no{1}2}, \C_{\no{1}\,\no{2}}$ are, respectively,
\[
x_{12},\; x_{1\no{2}}=x_1-x_{12},\; x_{\no{1}2}=x_2-x_{12},\; x_{\no{1}\,\no{2}}=1-x_1-x_2+x_{12}.
\]
These previsions are the coefficients which allow to represent $\M$ as a linear convex combinations of the points of the set $\B$. By Remark \ref{REM:CNESIB}, coherence of $\M$ amounts to the inequalities in (\ref{EQ:LISTINEQ}), that is  
\[
x_{12} \geq 0,\; x_1-x_{12} \geq 0,\; x_2-x_{12} \geq 0,\; 1-x_1-x_2+x_{12} \geq 0,
\]
which are equivalent to the following conditions
\[
(x_1,x_2) \in [0,1]^2,\;\;\; \max\{0,x_1+x_2-1\}\; \leq \; x_{12} \; \leq \; \min \{x_1,x_2\}.
\]
Notice that, by recalling Theorem \ref{THM:SIMPLESSO}, each vector $\mathscr{V}$ of the 3-dimensional simplex $\Delta$ determines a coherent prevision assessments on $\mathscr{F}\cup\mathscr{K}=\{E_1|H_1,E_2|H_2,\mathscr{C}_{12},\mathscr{C}_{1\no{2}},\mathscr{C}_{\no{1}2},\mathscr{C}_{\no{1}\,\no{2}} \}$. For instance, with the vector $\mathscr{V}=(v_{\{1,2\}},v_{\{1\}},v_{\{2\}},v_{\emptyset})=(\frac{1}{6},\frac{1}{6},\frac{1}{3},\frac{1}{3})$ it is associated the assessment $(\M,\mathscr{P})=
(x_1,x_2,x_{12},x_{1\no{2}},x_{\no{1}2},x_{\no{1}\,\no{2}})$, where 
\[
\begin{array}{ll}
x_{1}=v_{\{1,2\}}+v_{\{1\}}=\frac13,\;\;
x_{2}=v_{\{1,2\}}+v_{\{2\}}=\frac12,\;\;
x_{12}=v_{\{1,2\}}=\frac16,\\
x_{1\no{2}}=v_{\{1\}}=\frac16,\;\;
x_{\no{1}2}=v_{\{2\}}=\frac13,\;\;
x_{\no{1}\,\no{2}}=v_{\{\emptyset\}}=\frac13.
\end{array}
\]
\end{example}
\begin{example}\label{EX:CNESIBn=3}
In this example, by using    the set of conditional constituents,	we obtain the same  result   given in \cite[Corollary 1]{GiSa19}.	We start by a family	
$\mathscr{E}=\{E_1|H_1,E_2|H_2,E_3|H_3\}$, where  $E_1,E_2,E_3,H_1,H_2,H_3$ are logically independent.
The  conditional constituents are $\C_{123}=(E_1|H_1) \wedge (E_2|H_2) \wedge (E_3|H_3), \ldots, \C_{\no{1}\,\no{2}\,\no{3}}=(\no{E}_1|H_1) \wedge (\no{E}_2|H_2) \wedge (\no{E}_3|H_3)
$.
Let $\M=(x_1,x_2,x_3,x_{12},x_{13},x_{23},x_{123})$ be a prevision assessment on the family  $\mathscr{F}=\{E_1|H_1, E_2|H_2, E_3|H_3, \C_{12}, \C_{13}, \C_{23}, \C_{123}\}$,
where $\C_{ij}=E_i|H_i \wedge E_j|H_j$. By logical independence and by  Theorem \ref{THM:CNESIB},  coherence of $\M$ amounts to the condition $\M \in \I_{\B}$, where  $\B$ is the set of points
\[
\begin{array}{ll}
Q_{123}=(1,1,1,1,1,1),\; Q_{12\no{3}}=(1,1,0,1,0,0,0),\;
 Q_{1\no{2}3}=(1,0,1,0,1,0,0),\;
 \\
 Q_{1\no{2}\,\no{3}}=(1,0,0,0,0,0,0),\;
Q_{\no{1}23}=(0,1,1,0,0,1),\; Q_{\no{1}2\no{3}}=(0,1,0,0,0,0,0),\;
\\
 Q_{\no{1}\,\no{2}3}=(0,0,1,0,0,0,0),\; Q_{\no{1}\,\no{2}\,\no{3}}=(0,0,0,0,0,0,0).
 \end{array}
\]
By recalling  (\ref{EQ:CONDNEC}), the previsions of the conditional constituents,  which are the coefficients in the representation of   $\M$ as a linear convex combinations of the points of the set $\B$,  are 
\[
x_{123},\; x_{12\no{3}}=x_{12}-x_{123},\; x_{1\no{2}3}=x_{13}-x_{123},\; x_{1\no{2}\,\no{3}}=x_{1\no{2}}-x_{1\no{2}3}=x_1-x_{12}-x_{13}+x_{123},
\]
\[
x_{\no{1}23}=x_{23}-x_{123},\; x_{\no{1}2\no{3}}=x_{\no{1}2}-x_{\no{1}23} =x_2-x_{12}-x_{23}+x_{123},\; x_{\no{1}\,\no{2}3}=x_{\no{1}3}-x_{\no{1}23}=x_3-x_{13}-x_{23}+x_{123},\; 
\]
\[
x_{\no{1}\,\no{2}\,\no{3}}=x_{\no{1}\,\no{2}}-x_{\no{1}\,\no{2}3}=(1-x_1-x_2+x_{12})-(x_3-x_{13}-x_{23}+x_{123})=1-x_1-x_2-x_3+x_{12}+x_{13}+x_{23}-x_{123}.
\]
 By Remark \ref{REM:CNESIB}, coherence of $\M$ amounts to the inequalities in (\ref{EQ:LISTINEQ}), that is 
\[
x_{123} \geq 0,\; x_{12}-x_{123} \geq 0,\; x_{13}-x_{123} \geq 0,\; x_1-x_{12}-x_{13}+x_{123} \geq 0,\; x_{23}-x_{123} \geq 0,
\]
\[
x_2-x_{12}-x_{23}+x_{123} \geq 0,\;  x_3-x_{13}-x_{23}+x_{123} \geq 0,\; 1-x_1-x_2-x_3+x_{12}+x_{13}+x_{23}-x_{123} \geq 0,
\]
which can be written as  $(x_1,x_2,x_3) \in [0,1]^3,\; \; x_{123}' \leq \; x_{123} \; \leq \; x_{123}''$, where
\[
x_{123}'= \max\{0,x_{12}+x_{13}-x_1,x_{12}+x_{23}-x_2,x_{13}+x_{23}-x_3\},
\]
\[
x_{123}''=\min \{x_{12},x_{13},x_{23},1-x_1-x_2-x_3+x_{12}+x_{13}+x_{23}\}.
\]
Notice that there are inequalities which hold, even if they are not evident. Indeed, as $x_{123}''\geq x_{123}'$, it holds for instance that $1-x_1-x_2-x_3+x_{12}+x_{13}+x_{23} \geq x_{12}+x_{13}-1$, that is $x_{23} \geq x_2+x_3-1$, and so on. 
Of course, if  $x_{123}''< x_{123}'$ (because for instance $x_{12}<x_{12}+x_{13}-x_1$, that is $x_{13}>x_1$), then  the assessment is not coherent.
Moreover, by recalling Theorem \ref{THM:SIMPLESSO}, each vector $\mathscr{V}$ of the 7-dimensional simplex $\Delta$ determines a coherent prevision assessments on 
$\mathscr{F}\cup\mathscr{K}$.
\end{example}
We also remark that, in case of some logical dependencies, Theorem \ref{THM:CNESIB} is no more valid; that is, coherence is not equivalent to the condition $\M \in \I_{\B}$. We give below two examples; in the first one the set $\B$ is empty. 
\begin{example}\label{EX:AgHAgK}
Let  three events $A, H, K$ be given, with 
$HK=\emptyset$ and $A$ logically independent of $H$ and $K$. Moreover, let $\M=(x,y,z)$ be a prevision assessment on the family $\mathscr{F}=\{A|H,A|K,(A|H)\wedge(A|K)\}$. The constituents generated by $\{A|H,A|K\}$ are 
\[
C_1=AH\no{K},\; C_2=A\no{H}K,\; C_3=\no{A}H\no{K},\; C_4=\no{A}\no{H}K,\; C_0=\no{H}\no{K}.
\]
The associated points $Q_h$'s for the pair $(\mathscr{F},\M)$ are 
\[
Q_1=(1,y,y),\; Q_2=(x,1,x),\; Q_3=(0,y,0),\; Q_4=(x,0,0),\; Q_0=\M=(x,y,z).
\]
As we can see, it holds that $\B=\emptyset$ and hence $\I_{\B}=\emptyset$; then, to check coherence of $\M$ we cannot use the condition $\M \in \I_{\B}$, which is meaningless. Instead, in order to check coherence we need to start by checking the condition $\M \in \I_{\Q}$, which amounts to solvability of the system below.
\begin{equation*}
\left\{
\begin{array}{l}
x=\lambda_1+\lambda_2 x+\lambda_4 x, \\
y=\lambda_1 y+\lambda_2+\lambda_3 y, \\
z=\lambda_1 y+\lambda_2 x, \\
\lambda_1+\cdots+\lambda_4=1,\;\; \lambda_h \geq 0,\; h=1,2,3,4,
\end{array}
\right.
\end{equation*}
which can be written as 
\begin{equation*}
\left\{
\begin{array}{l}
xy=\lambda_1 y+\lambda_2 xy+\lambda_4 xy, \\
xy=\lambda_1 xy+\lambda_2 x+\lambda_3 xy, \\
z=\lambda_1 y+\lambda_2 x, \\
\lambda_1+\cdots+\lambda_4=1,\;\; \lambda_h \geq 0,\; h=1,2,3,4.
\end{array}
\right.
\end{equation*}
By summing the first two equations, we obtain: $z=xy$; then, the unique coherent extension of $(x,y)$ to the conditional constituent $(A|H)\wedge(A|K)$ is $z=xy$ (see also \cite{GiSa19b}).
 We observe that the assessment $(x,y,z)$ uniquely determines the extensions to the other conditional constituents, $(A|H)\wedge(\no{A}|K)$, $(\no{A}|H)\wedge(A|K)$, and  $(\no{A}|H)\wedge(\no{A}|K)$, given by $x(1-y), (1-x)y$, and $(1-x)(1-y)$, respectively. 
\end{example}
\begin{remark}
Example \ref{EX:AgHAgK} shows that in general, given two conditional events $E_1|H_1, E_2|H_2$,
 in order a prevision assessment $(x_{12},x_{1\no{2}}, x_{\no{1}2}, x_{\no{1}\,\no{2}})$ on the family of conditional constituents $\{\C_{12},\C_{1\no{2}}, \C_{\no{1}2}, \C_{\no{1}\,\no{2}}\}$ be coherent, it is not sufficient that the conditions given in (\ref{EQ:CONBAS}), that is \[
x_{12}+x_{1\no{2}}+x_{\no{1}2}+ x_{\no{1}\,\no{2}}=1,\; x_{12}\geq 0, x_{1\no{2}}\geq 0, x_{\no{1}2}\geq 0, x_{\no{1}\,\no{2}}\geq 0,
\]
be satisfied. Indeed,  even if the previous conditions imply that 
$x_{12}+x_{1\no{2}}=x_1$, and $x_{12}+x_{\no{1}2}=x_2$, in Example \ref{EX:AgHAgK} coherence  also requires that the conditions  $x_{12}= x_1x_2$, $x_{1\no{2}}= x_1(1-x_2)$, $x_{\no{1}2}=(1-x_1)x_2$,  and $x_{\no{1}\,\no{2}}=(1-x_1)(1-x_2)$  be satisfied, and this is not guaranteed.  Then, the assessment $(x_{12}, x_{1\no{2}},x_{\no{1}2},x_{\no{1}\,\no{2}})$ could be incoherent. The same remark holds more in general when we consider the c-constituents associated with $n$ conditional events. In other words, the conditions in (\ref{EQ:CONBAS}) are necessary but not sufficient for coherence.
\end{remark}
\begin{example}
We examine the previous example, by assuming $HK \neq \emptyset$. In this case the constituents generated by $\{A|H,A|K\}$ are 
\[
C_1=AH\no{K},\; C_2=A\no{H}K,\; C_3=\no{A}H\no{K},\; C_4=\no{A}\no{H}K,\; C_5=AHK,\;C_6=\no{A}HK,\; C_0=\no{H}\no{K},
\]
and the points $Q_h$'s for the pair $(\F,\M)$ are 
\[
Q_1=(1,y,y),\; Q_2=(x,1,x),\; Q_3=(0,y,0),\; Q_4=(x,0,0),\; Q_5=(1,1,1),\; Q_6=(0,0,0),
\]
and $Q_0=\M=(x,y,z)$.
In this case   $\B=\{Q_5, Q_6\}=\{(1,1,1), (0,0,0)\}$, that is $\B$ is non empty, but its cardinality is less than $2^2=4$ as there are logical dependencies ($E_1=E_2=A$). Then, to check coherence of $\M$ we cannot use the condition $\M \in \I_{\B}$, but we still need to start by checking the condition $\M \in \I_{\Q}$, which amounts to solvability of the system below.
\begin{equation*}
\left\{
\begin{array}{l}
x=\lambda_1+\lambda_2 x+\lambda_4 x+\lambda_5, \\
y=\lambda_1 y+\lambda_2+\lambda_3 y+\lambda_5, \\
z=\lambda_1 y+\lambda_2 x+\lambda_5, \\
\lambda_1+\cdots+\lambda_5=1,\;\; \lambda_h \geq 0,\; h=1,2,3,4,5,
\end{array}
\right.
\end{equation*}
where it is immediate to verify that $z \leq x$ and $z \leq y$. Moreover, the system can be written as 
\begin{equation*}
\left\{
\begin{array}{l}
xy=\lambda_1 y+\lambda_2 xy+\lambda_4 xy+\lambda_5 y, \\
xy=\lambda_1 xy+\lambda_2 x+\lambda_3 xy+\lambda_5 x, \\
z=\lambda_1 y+\lambda_2 x+\lambda_5, \\
\lambda_1+\cdots+\lambda_5=1,\;\; \lambda_h \geq 0,\; h=1,2,3,4,5.
\end{array}
\right.
\end{equation*}
By summing the first two equations, we obtain: \[
xy=z-\lambda_5 (1-x)(1-y)-\lambda_6 xy;
\] 
that is
\[
z=xy+\lambda_5 (1-x)(1-y)+\lambda_6 xy \geq xy.
\]
Then, the set of coherent extensions $z$ of $(x,y)$ to the conditional constituent $(A|H)\wedge(A|K)$ is the set $\{z : \, xy \leq z \leq \min \{x,y\}\}$ (see also \cite[Theorem 5 ]{GiSa19b}). 
\end{example}
\section{Some comparison with other approaches}
\label{SEC:COMP}
Usually in literature the notion  of conjunction has been defined as a suitable conditional event; for some of these notions
the lower and upper probability bounds  have been computed in 
 \cite{SUM2018S}.
However, by defining compound conditionals as tri-valued entities,  some basic  probabilistic properties are not satisfied.  
Within our approach  the conjunction  of conditional events
is no longer a tri-valued entity, but  it is a suitable conditional random quantity with a finite number of possible values in the unit interval. Anyway, this lack of closure does not seem a high price to pay because by our definition we preserve relevant probabilistic properties. On the other hand,  there is often a lack of closure with respect to mathematical operations. This happens, for instance, by considering the ratio of integer numbers.  

In the next subsection we make a comparison between  quasi conjunction and conjunction.
\subsection{A comparison between quasi conjunction and conjunction}
We recall  below the notion of 
	quasi conjunction (\cite{adams75}, see also \cite{Cala87,DuPr94,Scha68}),   which coincides with   Soboci\'{n}ski conjunction (\cite{CiDu12}), defined as 
	\begin{equation}\label{EQ:QC}
	\begin{array}{ll}
	Q(E_1|H_1,E_2|H_2)=[(\no{H}_1\vee E_1H_1)\wedge (\no{H}_2\vee E_2H_2)]|(H_1\vee H_2)=\\=
	(E_1H_1E_2H_2+\no{H}_1E_2H_2+\no{H}_2E_1H_1)|(H_1\vee H_2).
	\end{array}
	\end{equation}
Concerning the lower  and upper bounds on quasi conjunction,  the assessment $(x_1,x_2)$ on $\{E_1|H_1, E_2|H_2\}$, with $E_1,H_1,E_2$, $H_2$ logically independent, propagates to   the interval $[z',z'']$ on the probability of $Q(E_1|H_1,E_2|H_2)$, where 	(\cite{gilio12ijar,gilio13})
\[
z'=\max\{x_1+x_2-1,0\},\;\; \;z''=\left\{
\begin{array}{ll}
\frac{x_1+x_2-2x_1x_2}{1-x_1x_2}, &(x_1,x_2)\neq (1,1),\\
1, &(x_1,x_2)=(1,1).
\end{array}
\right.
\]
Notice that $z''\geq \min\{x_1,x_2\}$, that  is the upper bound for the quasi conjunction is  greater than or equal to the
Fr\'echet-Hoeffding upper bound.  For instance, when $x_1=x_2=\frac12$ it follows that
$z''=\frac{2}{3}>\frac{1}{2}=\min\{\frac{1}{2},\frac{1}{2}\}$.  Thus our notion of conjunction preserves  Fr\'echet-Hoeffding bounds, while quasi conjunction does not. Table  \ref{TAB:TABLEQC} illustrates the numerical values of quasi conjunction and conjunction of two conditional events. 
	\begin{table}[!h]
		\centering
		\begin{tabular}{l|l|c|c|c|c}
			&	$C_h$&  $E_1|H_1$   &  $E_2|H_2$   & $(E_1|H_1)\wedge(E_2|H_2)$ & $Q(E_1|H_1,E_2|H_2)$\\
			\hline		                 
			$C_1 $&$E_1H_1E_2H_2  $           & 1  & 1  &1  &    1  \\
			$C_2 $&		$	E_1H_1\no{E}_2H_2$    & 1  & 0 & 0 &   0   \\
			$C_3 $&		$	
			E_1H_1\no{H}_2  $       & 1  & $x_2$ &$x_2$   &    1  \\
			$C_4 $&		$	\no{E}_1H_1E_2H_2$    & 0 & 1  & 0  &   0  \\
			$C_5 $&		$	\no{E}_1H_1\no{E}_2H_2 $& 0 & 0 & 0    &   0 \\
			$C_6 $&		$	\no{E}_1H_1\no{H}_2$  & 0 & $x_2$ &0   &   0 \\
			$C_7 $&		$	\no{H}_1E_2H_2$       & $x_1$ & 1  & $x_1$  &    1  \\
			$C_8$&		$	\no{H}_1\no{E}_2H_2$  & $x_1$ & 0 & 0  &   0  \\
			$C_0 $&		$	\no{H}_1\no{H}_2 $    & $x_1$ & $x_2$ & $x_{12}$   	&   $z$ \\

		\end{tabular}
		\caption{Numerical values of the  conjunctions.  The values $x_1,x_2,x_{12},z$ denote $P(E_1|H_1)$, $P(E_2|H_2)$,  
 $\prev[(E_1|H_1)\wedge (E_2|H_2)]$ and
						$P[Q(E_1|H_1,E_2|H_2)]$, respectively. }
		\label{TAB:TABLEQC}
	\end{table}	
As shown in Table \ref{TAB:TABLEQC}, the value of the conjunction is less than or equal to the value of the quasi conjunction when $H_1\vee H_2$ is true. Then, 
by Remark \ref{REM:INEQ-CRQ},  it holds that 
\[
\prev[(E_1|H_1)\wedge (E_2|H_2)]= x_{12}\,\leq\, z=P[Q(E_1|H_1,E_2|H_2)]
\] and hence 
$(E_1|H_1)\wedge (E_2|H_2)\leq Q(E_1|H_1,E_2|H_2)$ also  when $\no{H}_1\no{H}_2$ is true. Thus, in all cases it holds that
	\begin{equation}
	(E_1|H_1)\wedge (E_2|H_2)\leq Q(E_1|H_1,E_2|H_2).
	\end{equation}
We observe that,  in the particular cases where  
 $E_1H_1\no{H}_2=\no{H}_1E_2H_2=\emptyset$, or   $x_1=x_2=1$,  
 by Theorem \ref{THM:EQ-CRQ} 
 it holds that  $z=x_{12}$ and $(E_1|H_1)\wedge (E_2|H_2)= Q(E_1|H_1,E_2|H_2)$.  More  precisely  $x_1=x_2=1$ implies $z=x_{12}=1$.   Moreover,  $E_1H_1\no{H}_2=\no{H}_1E_2H_2=\emptyset$ implies  that
  \[
  (E_1|H_1)\wedge (E_2|H_2)= Q(E_1|H_1,E_2|H_2)=E_1H_1E_2H_2|(H_1\vee H_2),
  \]
 with $z=x_{12}=P(E_1H_1E_2H_2|(H_1\vee H_2))$. In this case,  $(E_1|H_1)\wedge (E_2|H_2)$ also coincides with the  Kleene-Lukasiewicz-Heyting conjunction  $E_1H_1E_2H_2|(E_1H_1E_2H_2 \vee \no{E}_1H_1 \vee \no{E}_2H_2)$ (see \cite[Table 2]{SUM2018S}). We recall that 
the Kleene-Lukasiewicz-Heyting conjunction  coincides with the logical product between tri-events given in \cite{defi36} (see also \cite{Miln97}). In addition, we observe  that  
 \[
 Q(E_1|H_1,E_2|H_2)-(E_1|H_1)\wedge (E_2|H_2)=[(1-x_1)\no{H_1}E_2H_2+(1-x_2)E_1H_1\no{H_2}]|(H_1\vee H_2)\geq 0,
 \]
and 
 \[
 z- x_{12}=(1-x_1)P(\no{H}_1E_2H_2|(H_1\vee H_2))+
 (1-x_2)P(E_1H_1\no{H}_2|(H_1\vee H_2))\geq 0.
 \]
Then, to assess $z=x_{12}$ amounts to
\[
(1-x_1)P(\no{H}_1E_2H_2|(H_1\vee H_2))=(1-x_2)P(E_1H_1\no{H}_2|(H_1\vee H_2))=0,
\] that is 
$(P(\no{E}_1|H_1)=1-x_1=0 $ or $ P(\no{H}_1E_2H_2|(H_1\vee H_2))=0)$ and  $(P(\no{E}_2|H_2)=1-x_2=0$ or  $P(E_1H_1\no{H}_2|(H_1\vee H_2))=0)$. In addition, it is true that in a conditional bet on quasi conjunction we receive a random amount greater than or equal to the random amount received in a conditional bet on  conjunction, but in these  bets we pay two different amounts $z$ and $x_{12}$, with $z\geq x_{12}$. Moreover, $z= x_{12}$ only in  extreme cases  where some suitable conditional probabilities are zero.\\
We also recall  the notion of logical inclusion relation among  conditional events given in  \cite{GoNg88} (see also \cite{PeVi14} for an extension to conditional gambles). Given two conditional events 
$E_1|H_1$ and $E_2|H_2$, we say that $E_1|H_1$ implies $E_2|H_2$, denoted by $E_1|H_1 \subseteq E_2|H_2$, iff $E_1H_1$ {\em true} implies $E_2H_2$ {\em true} and $\no{E}_2H_2$ {\em true} implies $\no{E}_1H_1$ {\em true}; i.e., iff $E_1H_1 \subseteq E_2H_2$ and $\no{E}_2H_2 \subseteq \no{E}_1H_1$. Then, 
we remark that given two conditional events $E_1|H_1, E_2|H_2 $, with 
$E_1|H_1 \subseteq E_2|H_2$,
 for the quasi conjunction it holds that  (\cite{GiSa13IJAR})
\[
E_1|H_1\subseteq Q(E_1|H_1,E_2|H_2) \subseteq E_2|H_2,
\]
while in our approach one has
\[
(E_1|H_1)\wedge (E_2|H_2)=E_1|H_1.
\] 
Moreover,  if
 $E_1|H_1\subseteq E_2|H_2 \subseteq E_3|H_3$  then 
 \[
(E_1|H_1)\wedge (E_2|H_2)\wedge (E_3|H_3)=E_1|H_1,
 \]
 while 
 \[
 E_1|H_1\subseteq Q(E_1|H_1,E_2|H_2) \subseteq
 Q(E_1|H_1,E_2|H_2,E_3|H_3)\subseteq E_3|H_3,
 \]
 and so on (see also \cite[Theorem 9]{gilio13}). We also observe that from
 $ E_1|H_1\subseteq E_2|H_2$, it follows that 
 $P(E_1|H_1) \leq P(E_2|H_2)$ and   $E_1|H_1\leq E_2|H_2$.  This property of 
  conditional monotony of conditional probability, as shown in Remark \ref{REM:INEQ-CRQ}, holds more in general for the conditional previsions of  conditional random quantities. For instance, given $n+1$ conditional events $E_1|H_1,\cdots, E_{n+1}|H_{n+1}$,  by applying Remark \ref{REM:INEQ-CRQ} with  $X|H=\mathscr{C}_{1\cdots n+1}$ and 
  $Y|K=\mathscr{C}_{1\cdots n}$, it holds that
     $\mathscr{C}_{1\cdots n+1}\leq \mathscr{C}_{1\cdots n}$ 
     when $H_1\vee \cdots \vee H_{n+1}$ is true; then $\prev(\mathscr{C}_{1\cdots n+1})\leq \prev(\mathscr{C}_{1\cdots n+1})$ and hence      $\mathscr{C}_{1\cdots n+1}\leq \mathscr{C}_{1\cdots n}$  in all cases; a dual result is  valid for disjunctions (\cite[theorems 7 and 8]{GiSa19}). As we can see, the property of conditional monotony of conditional previsions is satisfied. 
\subsection{On Boolean algebras of conditionals}
Boolean algebras of conditionals have been  studied in \cite{FlGH15,FlGH17},  where the authors characterize the atomic structure of the algebra of conditionals and introduce the logic of Boolean conditionals. In their work the notions of conjunction $\sqcap$ 
and  disjunction $\sqcup$ 
are not (completely) specified, but it is assumed that some basic properties are satisfied. For instance, given three events $A,B,C$,  it is required  that (\cite[Proposition 1]{FlGH17}, see also \cite[Proposition 3.3]{FlGH20})
\begin{equation}\label{EQ:THIRDAXIOM}
(A|B)\sqcap (B|C)=A|C,  \mbox{ when } A\subseteq B\subseteq C.
\end{equation}
In our approach we do not start by an algebra of events, by means of  which  an  algebra of conditionals is constructed, but we  consider  arbitrary families of conditional events. Then we determine the associated constituents  and directly define the notions of conjunction and disjunction, by verifying the properties. For instance, in our approach formula (\ref{EQ:THIRDAXIOM}) holds.
Indeed, by assuming that   $A\subseteq B \subseteq C$, we obtain
\begin{equation}
(A|B) \wedge (B|C) =\left\{\begin{array}{ll}
1, &\mbox{if $A$ is true,}\\
0, &\mbox{if $\no{A}C$ is true,}\\
x_{12}, &\mbox{if $\no{C}$ is true},
\end{array}
\right.
\end{equation}		
where $x_{12}=\prev[(A|B) \wedge (B|C)]$.
Moreover, 
\begin{equation}
A|C =\left\{\begin{array}{ll}
1, &\mbox{if $A$ is true,}\\
0, &\mbox{if $\no{A}C$ is true,}\\
z, &\mbox{if $\no{C}$ is true},
\end{array}
\right.
\end{equation}		
where $z=P(A|C)$. Then, by Theorem 
 \ref{THM:EQ-CRQ}, it follows that  $x_{12}=z$ and hence  $(A|B) \wedge (B|C)=A|C$. Moreover, 
 it holds that 
\[
\prev[(A|B)\wedge(B|C)]=P(A|C)=P(AB|C)=P(A|BC)P(B|C)=P(A|B)P(B|C),
\]
which is the  well known compound probability theorem.

Our notion of conjunction  satisfies another property which is related  to the atoms of the Boolean algebra of conditionals studied in \cite{FlGH17,FlGH20}.
This property is described in the result below  (where it is not assumed that the conditioning events have positive probability).
\begin{theorem}\label{THM:ATOMS}
Let $H_1,\ldots,H_n$ be $n$ pairwise incompatible events. 
Then,\\
\begin{equation} \label{EQ:ATOMS}
 (H_1| \Omega)\wedge (H_2|\no{H}_1)\wedge \cdots \wedge (H_n|\no{H}_1\cdots \no{H}_{n-1})=
P (H_2|\no{H}_1)\cdots  P(H_n|\no{H}_1\cdots \no{H}_{n-1})\,H_1,
\end{equation}
so  that 
\[
\prev[(H_1| \Omega)\wedge (H_2|\no{H}_1)\wedge \cdots \wedge (H_n|\no{H}_1\cdots \no{H}_{n-1})]=
P(H_1)P (H_2|\no{H}_1)\cdots  P(H_n|\no{H}_1\cdots \no{H}_{n-1})
.
\]
\end{theorem}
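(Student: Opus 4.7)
The plan is to prove a stronger downward-inductive statement about the partial conjunctions $\C^{(j)} := \bigwedge_{i=j}^n (H_i|K_i)$, where I set $K_1 = \Omega$ and $K_j = \no{H}_1 \cdots \no{H}_{j-1}$ for $j \geq 2$, and $p_j := P(H_j|K_j)$. Because the $H_i$ are pairwise disjoint, $H_j \subseteq K_j$ for each $j$, and the conditioning event of $\C^{(j)}$ is $\bigvee_{i=j}^n K_i = K_j$. The inductive claim will be the pair of identities
\[
\C^{(j)} = \prev(\C^{(j+1)})\, H_j + \prev(\C^{(j)})\, \no{K}_j, \qquad \prev(\C^{(j)}) = p_j p_{j+1} \cdots p_n,
\]
with the convention $\prev(\C^{(n+1)}) = 1$. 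The base case $j=n$ is immediate from (\ref{EQ:AgH}) since $H_n|K_n = H_n + p_n \no{K}_n$.

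For the inductive step, I would enumerate the possible values of $\C^{(j)}$ directly from Definition \ref{DEF:CONGn}. The value $1$ would require all of $H_j, \ldots, H_n$ to be simultaneously true, which is ruled out by pairwise incompatibility. The disjunction $\bigvee_{i=j}^n \no{H}_i K_i$ collapses to $\no{H}_1 \cdots \no{H}_j$, on which $\C^{(j)}$ equals $0$. For any proper nonempty $S \subsetneq \{j,\ldots,n\}$, the constituent $\bigwedge_{i \in S} \no{K}_i \wedge \bigwedge_{i \notin S} H_i$ forces at most one of the $H_i$'s ($i \notin S$) to be true; combined with the requirement $\no{K}_i = H_1 \vee \cdots \vee H_{i-1}$ for each $i \in S$, the unique admissible configuration is $S = \{j+1,\ldots,n\}$ with $H_j$ true, giving the value $\prev(\C^{(j+1)})$ on $H_j$. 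The full case $S = \{j,\ldots,n\}$ reduces to $\no{K}_j$ true (since $\no{K}_i \supseteq \no{K}_j$ for $i>j$), contributing value $\prev(\C^{(j)})$. Together these exhaust the nonzero constituents and yield the structural identity.

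For the prevision, I observe that $\C^{(j)}$ restricted to its conditioning event $K_j$ coincides with $\prev(\C^{(j+1)})\, H_j$, so the compound prevision theorem and linearity give $\prev(\C^{(j)}) = \prev(\C^{(j)}|K_j) = \prev(\C^{(j+1)})\, P(H_j|K_j) = p_j \prev(\C^{(j+1)})$, which by induction equals $p_j p_{j+1} \cdots p_n$; this argument does not require $P(K_j) > 0$ since coherence suffices. Applying the conclusion at $j=1$, where $\no{K}_1 = \emptyset$, the structural identity collapses to $\C^{(1)} = \prev(\C^{(2)})\, H_1 = p_2 \cdots p_n \cdot H_1$, which is (\ref{EQ:ATOMS}); taking prevision then yields $\prev(\C^{(1)}) = P(H_1)\, p_2 \cdots p_n = p_1 p_2 \cdots p_n$. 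I expect the main technical hurdle to be the combinatorial case analysis from Definition \ref{DEF:CONGn}, specifically the verification that pairwise incompatibility together with the chain structure of the $K_i$'s forces all intermediate subsets $S$ to collapse to the single admissible choice $S = \{j+1,\ldots,n\}$.
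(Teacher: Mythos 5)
Your proof is correct, and it reaches the result by a genuinely different route from the paper's. The paper inducts \emph{upward} on the number of events, working with the prefix conjunctions $(H_1|\Omega)\wedge\cdots\wedge(H_m|\no{H}_1\cdots\no{H}_{m-1})$: its inductive invariant is that this prefix equals the \emph{unconditional} quantity $x_2\cdots x_m H_1$, and each step is carried out by a single application of the binary conjunction of Definition \ref{CONJUNCTION} (via the identification of Remark \ref{REM:CONGCONG}), the only computation being $H_1\wedge(H_m|\no{H}_1\cdots\no{H}_{m-1})=x_mH_1$, after pulling the scalar $x_2\cdots x_{m-1}$ outside the conjunction. You instead induct \emph{downward} on $j$ with the suffix conjunctions $\C^{(j)}=\bigwedge_{i=j}^n(H_i|K_i)$, whose invariant is the genuinely conditional form $\prev(\C^{(j+1)})H_j+\prev(\C^{(j)})\no{K}_j$, and you extract it directly from the $n$-ary Definition \ref{DEF:CONGn} by checking that pairwise incompatibility, together with the chain $K_n\subseteq\cdots\subseteq K_j$, empties every constituent except $S=\{j+1,\ldots,n\}$ (active on $H_j$) and $S=\{j,\ldots,n\}$ (active on $\no{K}_j$); your case analysis of the sets $S$ is complete and correct, and the prevision step via Theorem \ref{THM:EQ-CRQ} and the compound prevision theorem is the same coherence argument the paper uses implicitly. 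What each approach buys: the paper's is shorter once one grants associativity and the (somewhat informally justified) step of pulling a scalar factor out of a conjunction; yours avoids both of those ingredients entirely, at the price of the combinatorial enumeration of constituents, and as a by-product identifies every suffix $\C^{(j)}$ explicitly as a conditional random quantity on $K_j$ with prevision $p_j\cdots p_n$, not just the full conjunction at $j=1$.
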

\begin{proof}
We set $P(H_1)=x_1$ and  
$P(H_j|\no{H}_1\cdots \no{H}_{j-1})=x_{j}$, $j=2,\ldots,n$, and 
$\prev[(H_1| \Omega)\wedge (H_2|\no{H}_1)\wedge \cdots \wedge (H_n|\no{H}_1\cdots \no{H}_{n-1})]=x_{1\cdots n}$. Formula (\ref{EQ:ATOMS}) holds for $n=2$ and $n=3$. Indeed, for $n=2$ it holds that 
\begin{equation}\label{EQ:ATOMSn=2}
(H_1|\Omega) \wedge (H_2|\no{H}_1) =
\left\{\begin{array}{ll}
x_2, &\mbox{if $H_1$ is true,}\\
0, &\mbox{if $\no{H}_1$ is true,}\\
\end{array}=x_2H_1,
\right.
\end{equation}	
so that 
\begin{equation*}
\prev[(H_1|\Omega) \wedge (H_2|\no{H}_1)]=x_{12}=x_2P(H_1)=x_1x_2=P(H_1)P(H_2|\no{H}_1).
\end{equation*}	
Moreover, based on (\ref{EQ:ATOMSn=2}) and on Definition \ref{CONJUNCTION}, for $n=3$ we obtain
\begin{equation*}
\begin{array}{ll}
(H_1|\Omega)\wedge  (H_2|\no{H}_1)\wedge (H_3|\no{H}_1\no{H}_2) =x_2H_1\wedge (H_3|\no{H}_1\no{H}_2)=\\
=x_2[(H_1H_3\no{H}_2\no{H}_1+x_{1}\no{\Omega}H_3\no{H}_2\no{H}_1+x_{3}(H_1\vee H_2)H_1  ]|(\Omega \vee \no{H}_1\no{H}_2 )=x_2(x_3H_1|\Omega)=x_2x_3H_1.
\end{array}
\end{equation*}	
We assume by induction that (\ref{EQ:ATOMS}) holds for $n-1$, that is 
\begin{equation}\label{EQ:INDATOMS}
(H_1| \Omega)\wedge (H_2|\no{H}_1)\cdots \wedge (H_{n-1}|\no{H}_1\cdots \no{H}_{n-2})=x_{2}\cdots x_{n-1}H_1,
\end{equation}
then  we prove that it holds for $n$.
Indeed, from (\ref{EQ:INDATOMS}) we
obtain 
\[
 (H_1| \Omega)\wedge (H_2|\no{H}_1)\wedge \cdots \wedge (H_n|\no{H}_1\cdots \no{H}_{n-1})=x_{2}\cdots x_{n-1}H_1\wedge (H_n|\no{H}_1\cdots \no{H}_{n-1}).
\]
Moreover, by Definition \ref{CONJUNCTION}, it holds that
\[
\begin{array}{ll}
H_1\wedge (H_n|\no{H}_1\cdots \no{H}_{n-1})=\\
=(H_1H_n\no{H}_{1}\cdots \no{H}_{n-1}+x_{1}\no{\Omega}H_n\no{H}_1\cdots \no{H}_{n-1}+x_{n}(H_1\vee \cdots \vee H_{n-1})H_1  )|(\Omega \vee \no{H_1}\no{H}_2 )=\\
=x_{n}H_1|\Omega=x_{n}H_1.
 \end{array}
\]
Finally,
\[
\begin{array}{ll}
(H_1| \Omega)\wedge (H_2|\no{H}_1)\wedge \cdots \wedge (H_n|\no{H}_1\cdots \no{H}_{n-1})=x_{2}\cdots x_{n-1}x_nH_1,
 \end{array}
\]
and hence $x_{1\cdots n}=x_1\cdots x_n$.
\end{proof}
\subsection{Some theoretical aspects and applications of conjunction}
In this section we recall some theoretical aspects and applications of our approach to compound conditionals.\\
- All the basic properties valid for the unconditional events are satisfied in our theory of compound conditionals. For instance, (generalized) De Morgan’s Laws are satisfied; moreover  the formula $P(E_1\vee E_2)=P(E_1)+P(E_2)-P(E_1E_2)$ becomes 
$\prev[(E_1|H_1)\vee(E_2|H_2)] =P(E_1|H_1)+P(E_2|H_2)-\prev[(E_1|H_1)\wedge(E_2|H_2)]$. \\
-The Fr\'echet-Hoeffding lower and upper prevision bounds for the conjunction (and for the disjunction) of two conditional events still hold.
\\
- A generalized  inclusion-exclusion formula for the disjunction of conditional events  holds in our approach to compound conditionals.\\
- We can introduce the notion of conditional constituents, with  properties analogous to the case of unconditional events, which allow to characterize coherence when the basic events are logically independent.\\
- Conjoined conditionals have been  applied to probabilistic nonmonotonic reasoning (\cite{GiPS20,GiSa19}), by obtaining a characterization for the property of probabilistic entailment of Adams (\cite{adams75}).
In particular, in \cite{GiSa19} it has been shown that a conditional event $E_{n+1}|H_{n+1}$ is p-entailed from  a p-consistent family of $n$ conditional events $E_1|H_1,\cdots,  E_n|H_n$ if and only if the conjunction  $\mathscr{C}_{1\cdots n+1}
 $ of  the premises and the conclusion
 coincides with the conjunction 
 $\mathscr{C}_{1\cdots n}$
 of the  premises. Another equivalent condition is that $\mathscr{C}_{1\cdots n}\leq E_{n+1}|H_{n+1}$. Moreover, by exploiting a suitable notion of iterated conditional, in \cite{GiPS20} it has been shown that a family $\{E_1|H_1,E_2|H_2\}$ p-entails a conditional event $E_3|H_3$ if and only if the iterated conditional $(E_3|H_3)|((E_1|H_1)\wedge (E_2|H_2))$ is constant and coincides with 1.\\
- Compound conditionals have been also applied to the psychology of the probabilistic reasoning, where by exploiting the notion of iterated conditional, the probabilistic modus ponens has been generalized to conditional events  (\cite{SaPG17}).\\
- Another application  to  one-premise and two-premise centering inferences 
has been given in 
\cite{GOPS16,SPOG18},  by also determining the lower and upper prevision bounds for the conclusion of the rules. \\
-We remark that, like in \cite{adams75,Kauf09} and differently from \cite{McGe89}, the Import-Export Principle is not valid in our theory of compound and iterated conditionals. Then, 
as proved in \cite{GiSa14} (see also \cite{SGOP20,SPOG18}), we avoid Lewis’ triviality results (\cite{lewis76}).  In addition,  within our theory,
we can    explain  some intuitive probabilistic assessments
discussed in  \cite{douven11b},   by suitably formalizing different kinds of latent information  (\cite{SGOP20}).

\section{Conclusions}
\label{SEC:CONC}
In this paper we  deepened  the study of conjunctions 
and disjunctions among conditional events in the framework of conditional random quantities. 
We proved that the Fr\'echet-Hoeffding bounds are a necessary coherence condition for the prevision assessments on  $\{\mathscr{C}_{1\cdots k},\mathscr{C}_{k+1\cdots n},\mathscr{C}_{1\cdots n}\}$, for every  $1\leq k\leq n-1$. 
We obtained 
a decomposition formula for the conjunction and  we introduced the set of (non negative) conditional constituents  $\mathscr{K}$ for a  family $\mathscr{E}$ of $n$ conditional events. 

We showed that, as in the case of unconditional events, the sum of the conditional constituents is equal to 1 and for each pair of them the conjunction is equal to 0. 
We verified that, for each non empty subset $S$, the conjunction  $\C_{S}$  is the sum of suitable conditional constituents in $\mathscr{K}$ and  hence  the prevision of $\C_S$ is the sum of the previsions of such conditional constituents.

We obtained a generalized  inclusion-exclusion formula for the disjunction of $n$ conditional events;  we proved a suitable  distributivity property and we examined some related  probabilistic results. 

Under logical independence,  we characterized in terms of a suitable convex hull the set of all coherent prevision assessments on a  family $\mathscr{F}$ containing $n$ conditional events and all the possible conjunctions among them. We showed that such a  characterization  amounts to the  solvability of a linear system and  we described the set of all coherent prevision assessments on  $\mathscr{F}$ by  a list of linear inequalities.
Based on the $(2^{n}-1)$-dimensional simplex $\Delta$, we characterized  (still under logical independence) the set of all coherent prevision assessments on $\mathscr{F} \cup \mathscr{K}$.

Then, given a coherent assessment $\mathcal{M}$ on $\mathscr{F}$, we showed that every  possible value $Q_h$ of the random vector associated with $\mathscr{F}$ is itself a particular coherent assessment on $\mathscr{F}$.
We deepened some aspects of coherence by illustrating  examples and counterexamples. 

We made  a comparison with other approaches, by obtaining  a result related to the notion of atom of a Boolean algebra of conditionals  introduced in \cite{FlGH17,FlGH20}. 
Finally,  we discussed the significance and perspectives of our theory by illustrating basic theoretical aspects and some applications to nonmonotonic reasoning and to the psychology of  probabilistic reasoning. 

Future work should concern in particular the study of necessary and sufficient conditions of coherence in the general case of logical dependencies among the basic unconditional events, by exploiting the set of conditional constituents.

Further future work could concern the study of compound conditionals in the setting of imprecise probabilities and gambles. Indeed, indicators of conditional events are ternary gambles and our conjunction builds $n$-ary gambles from ternary ones. Another interesting aspect that could be deepened is the study of the role of our compound conditionals in the framework of fuzzy logic and information fusion (see, e.g., \cite{CoPV17,CoVa18,DuFP20,dubo16}). 
\section*{Declaration of competing interest}
We wish to confirm that there are no known conflicts of interest associated with this publication and there has been no significant financial support for this work that could have influenced its outcome.

\section*{Acknowledgments}
\noindent
We thank the three anonymous reviewers for their careful reading of our   manuscript. Their many insightful comments and suggestions  were very helpful in improving this paper.  
Giuseppe Sanfilippo has been partially supported by the INdAM–GNAMPA Project
(2020 Grant U-UFMBAZ-2020-000819).
\bibliographystyle{model2-names} 
\bibliography{ijar_PerugiaR2}
\end{document}